\title{\vskip-2.0em Hochschild homology and cohomology of $\lp{1}(\Z_+^k)$}
\author{Yemon Choi}
\date{2nd September 2008\footnote{MSC 2000: Primary 46M18, 46J10; Secondary 16E40. \ackndiag}}
\newcommand{\ackndiag}{Uses Paul Taylor's {\tt diagrams.sty} macros.}
\newcommand{\LR}[2]{\pile{\lTo^{#1} \\ \rTo_{#2} }} 
\newcommand{\RL}[2]{\pile{\rTo^{#1} \\ \lTo_{#2} }} 
\newcounter{pulse}[section]
\numberwithin{pulse}{section}  
\theoremstyle{plain}
\newtheorem{thm}[pulse]{\sc Theorem}
\newtheorem{propn}[pulse]{\sc Proposition}
\newtheorem{lemma}[pulse]{\sc Lemma}
\newtheorem{coroll}[pulse]{\sc Corollary}
\theoremstyle{definition}
\newtheorem{defn}[pulse]{\sc Definition}
\newtheorem*{notn}{{\sc Notation}}
\theoremstyle{remark}
\newtheorem{rem}[pulse]{\sc Remark}
\newtheorem*{remstar}{\sc Remark}
\newcommand{\veps}{\varepsilon}
\newcommand{\wtild}{\widetilde}
\newcommand{\blob}{\bullet}
\newcommand{\blank}{\underline{\quad}}
\newcommand{\iso}{\cong}
\newcommand{\miso}{\underset{1}{\iso}} 
\newcommand{\sid}{{\mathop{\sf id}}}
\DeclareMathOperator{\Ker}{ker}
\DeclareMathOperator{\Coker}{coker}
\DeclareMathOperator{\Image}{im}
\newcommand{\alg}{{\sf alg}}  
\newcommand{\pcat}[1]{{\sf {#1}}} 
\newcommand{\st}{\;:\;}
\newcommand{\defeq}{:=}
\newcommand{\dt}[1]{{\it #1}\/}  
\renewcommand{\emph}[1]{{\sl #1\/}}  
\renewcommand{\em}{\sl}   
\newcommand{\Ban}{\pcat{Ban}}
\newcommand{\Mod}[2]{{}_{#1}{\sf mod}_{#2}} 
\newcommand{\unMod}[2]{{}_{#1}{\sf unmod}_{#2}} 
\newcommand{\LMod}[1]{\Mod{#1}{}}  
\newcommand{\RMod}[1]{\Mod{}{#1}} 
\newcommand{\LRMod}[2]{\Mod{#1}{#2}} 
\newcommand{\LunMod}[1]{\unMod{#1}{}} 
\newcommand{\RunMod}[1]{\unMod{}{#1}} 
\newcommand{\LRunMod}[2]{\unMod{#1}{#2}} 
\newcommand{\lHom}[1]{{}_{#1}{\rm Hom}} 
\newcommand{\abs}[1]{\vert{#1}\vert}
\newcommand{\norm}[1]{\Vert{#1}\Vert}
\newcommand{\Lin}[2]{{\mathcal L}({#1},{#2})} 
 \newcommand{\cvct}[2]{\left(\begin{matrix}{#1}\\{#2}\end{matrix}\right)} 
\newcommand{\pair}[2]{\langle{#1},\,{#2}\rangle}
\newcommand{\clos}[1]{\overline{#1}}
\newcommand{\lin}{\mathop{\rm lin}\nolimits} 
\newcommand{\tp}{{\scriptstyle\otimes}}
\newcommand{\ptp}{{\scriptstyle\widehat{\otimes}}}
\newcommand{\bigptp}{\widehat{\bigotimes}}
\newcommand{\ptpR}[1]{{\underset{#1}{\scriptstyle\widehat{\otimes}}}}
\newcommand{\Cplx}{\mathbb C}
\newcommand{\Nat}{\mathbb N}
\newcommand{\Rat}{\mathbb Q}
\newcommand{\Z}{{\mathbb Z}}
\newcommand{\Sym}[1]{S_{#1}} 
\newcommand{\lp}[2][]{\ell^{#2}_{#1}}
\newcommand{\Lp}[1]{L_{#1}}
\newcommand{\id}[1][]{{\sf 1}_{#1}} 
\newcommand{\fu}[1]{{#1}^{\sharp}}    
\newcommand{\cu}[1]{{#1}_{\sf un}}  
\newcommand{\qKahl}[1]{\widetilde{\Omega}_{#1}}  
\newcommand{\BGS}[2][]{e_{#1}(#2)}
\newcommand{\coBGS}[2][]{e_{#1}(#2)^*}
\newcommand{\fA}{{\mathfrak A}}
\newcommand{\sA}{{\sf A}}
\newcommand{\sB}{{\sf B}}
\newcommand{\sR}{{\sf R}}
\DeclareMathOperator{\preAss}{\widetilde{\sf Ass}}
\DeclareMathOperator{\preEx}{\widetilde{\sf Ex}}
\DeclareMathOperator{\Ass}{{\sf Ass}}
\DeclareMathOperator{\Ex}{{\sf Ex}}
\newcommand{\algExt}{{\sf Ext}}
\newcommand{\Tor}{{\rm Tor}}
\newcommand{\Ext}{{\rm Ext}}
\newcommand{\Ho}[3][]{{\mathcal {#2}_{#3}^{#1}}}  
\newcommand{\Co}[3][]{{\mathcal {#2}^{#3}_{#1}}} 
\newcommand{\HarH}[3][]{{\mathcal {H}ar}{\mathcal {#2}}_{#3}^{#1}}
\newcommand{\HarC}[3][]{{\mathcal {H}ar}{\mathcal {#2}}^{#3}_{#1}}
\newcommand{\bdy}{{\sf d}}
\newcommand{\dif}{\delta} 
\newcommand{\face}[2][]{\partial^{#1}_{#2}} 
\newcommand{\Kahler}{K\"{a}hler}
\newcommand{\Kunneth}{K\"{u}nneth}
\newcommand{\al}{\alpha}
\newcommand{\gm}{\gamma}
\newcommand{\sig}{\sigma}
\newcommand{\tht}{\theta}
\numberwithin{equation}{section}
\begin{document}
\maketitle
\begin{abstract}
Building on the recent determination of the simplicial cohomology groups of the convolution algebra ${\ell}^1({\mathbb Z}_+^k)$ [Gourdeau, Lykova, White, 2005] we investigate what can be said for cohomology of this algebra with more general symmetric coefficients. Our approach leads us to a discussion of Harrison homology and cohomology in the context of Banach algebras, and a development of some of its basic features. As an application of our techniques we reprove some known results on second-degree cohomology.
\end{abstract}


\section{Introduction}
In the development of cohomology theories for $K$-algebras (where $K$ is a commutative ring), the polynomial rings $K[x_1,\ldots,x_k]$ have played an important role: not only as examples whose Hochschild homology and cohomology is completely understood, but as `free objects' which one can use to take resolutions of more complicated and interesting algebras. For example, the equivalence of Harrison and Andr\'e-Quillen cohomology over fields of characteristic zero relies crucially on knowing the structure of the cohomology of polynomial rings.

One would like to make similar computations and constructions in the Banach algebraic setting, for suitable completions of $\Cplx[x_1,\ldots,x_k]$. However, progress here has been much slower, and indeed the Banach setting produces new phenomena. For instance: $\Cplx[z]$ is known to have global dimension $1$\/, i.e.~it has vanishing cohomology in degrees $2$ and above for \emph{arbitrary coefficient modules}\/; yet it has long been
known that $\Co{H}{2}(\lp{1}(\Z_+),\lp{1}(\Z_+))$ is nonzero, and is in fact an infinite-dimensional Banach space, see~\cite{DaDunc}.
Thus even if we restrict to symmetric coefficients, complications may arise.

It was shown recently in \cite{GouJohWh} that the simplicial cohomology of the convolution algebra $\lp{1}(\Z_+)$ vanishes in degrees 2 and above. 
This tells us that our choice of coefficient module is important. The underlying aim of this paper is to see how much we can deduce, from knowledge of simplicial cohomology, about cohomology with general symmetric coefficients.

More precisely, we show that one can
\begin{itemize}
\item[$(i)$] deduce partial results on the cohomology of $\lp{1}(\Z_+^k)$ with symmetric coefficients from knowledge of cohomology of $\lp{1}(\Z_+)$ with symmetric coefficients; and
\item[$(ii)$] reduce the caculation of the cohomology groups $\Co{H}{n}(\lp{1}(\Z_+),M)$\/, where $M$ is a \emph{symmetric bimodule}, to knowledge of the properties of $M$ as a \emph{one-sided} module.
\end{itemize}
These results rely crucially on results from \cite{GouJohWh,GouLyWh}: our approach is to build on the results rather than try to generalise their proofs, by using machinery from homological algebra and ideas from the `Hodge decomposition' of Hochschild homology \cite{GS_Hodge}.

\subsection*{Overview of the paper}
The main results of this paper are Theorems \ref{t:only_Harr} and~\ref{t:Harr_of_Ak}, in the sense that the previous sections are directed towards their proof. We have nevertheless sought to work in slightly greater generality when setting up the preliminary results of Sections \ref{s:Harrison} and \ref{s:babyKunn}.

Although we are motivated by well-established results in commutative algebra, much of the machinery from that setting is simply not applicable in the Banach algebraic setting. We are therefore forced to develop some machinery from scratch, although in some cases we can adapt existing tools from commutative algebra with relative ease: this absence of precise analogues for algebraic tools is reflected in the length of the paper.
It is hoped that the partial results given here will encourage the refinement and extension of the crude tools of Sections \ref{s:Harrison} and \ref{s:babyKunn}\/.

\subsection*{Acknowledgments}
Apart from Section \ref{s:someH2}, the results here are taken (with a few modifications) from the author's PhD thesis \cite{YC_PhD}, which was supported by an EPSRC grant. The author acknowledges the support of the University of Newcastle-upon-Tyne while this work was done, and the support of the University of Manitoba while this article was written up.
He also thanks N.~Gr{\o}nb{\ae}k, Z.~A.~Lykova and M.~C.~White for helpful discussions.

\medskip\noindent{\bf Remark/correction added August 2008.}
It has been pointed out to the author that there is a slight problem with the hypotheses in Proposition~\ref{p:CBA_Hoch}, Coroll\-ary~\ref{c:BGS_arb-coeff} and Propo\-sition~\ref{p:Harr-as-Ext}. Namely, the proofs given for these results seem to require the symmetric module $M$ to be unit-linked as a $\cu{A}$-module. (Otherwise, it is not clear that tensoring or homming with $M$ behaves in the right way).

One can fix this gap either by working throughout with the forced unitization~$\fu{A}$ (Theorem~\ref{t:BGS_alg}
 is unaffected by such a change), or by requiring all modules to be $\cu{A}$-unit-linked. The former approach is much more natural, but has the slight disadvantage in the present context that we then need to have vanishing theorems for the homology groups $\Ho{H}{n}(\lp{1}(\Z_+),\fu{\lp{1}(\Z_+)})$, which can be deduced from the results of \cite{GouLyWh} but seem not to be stated explicitly there. The second approach sidesteps this issue, but limits the class of modules that we can consider.

Either fix is straightforward to implement: I have not done so here, mainly to preserve this document's status as a ``pre-publication'' article. The slip-up has been corrected for the published version, which has been accepted (August 2008) by the {\it Quarterly Journal of Mathematics}\/.

\begin{section}{Preliminaries and notation}

\begin{subsection}{General notation and terminology}\label{ss:genprelim}

Throughout this article we abuse notation and write $\sid$ for the \dt{identity map} on a set, vector space, module, and so on. It should always be clear from context what the domain of $\sid$~is.

\subsubsection*{Algebras with and without identity}
Although our eventual focus will be on the Banach algebras $\lp{1}(\Z_+^k)$, which are unital, some of the general machinery applies to algebras without an identity element. Some notation will be needed.

\begin{notn}
If $K$ is a commutative ring with identity, and $\sA$ is a $K$-algebra which may or may not possess an identity element, we can form the \dt{forced unitisation} of $\sA$, which will be denoted by $\fu{\sA}$\/. (In the case where $K=\Cplx$ and $\sA$ is a Banach algebra, $\fu{\sA}$ is also a Banach algebra.) 

We define the \dt{conditional unitisation of $A$}, denoted by $\cu{\sA}$, to be $\sA$ itself if $\sA$ has an identity element, and $\fu{\sA}$ otherwise.

If $\sB$ is a $K$-algebra with identity, then we shall usually denote its identity element by $\id$\/, or by $\id[\sB]$ if there is possible confusion over which algebra we are dealing with.
\end{notn}

\subsubsection*{Seminormed and Banach spaces}
The Hochschild homology and cohomology groups of a Banach algebra are in general seminormed, rather than normed, spaces. At several points in Section \ref{s:babyKunn} we want to assert that two given seminormed spaces are `isomorphic', and so we briefly make precise what `isomorphism' means in this context.

If $(V,\norm{\blank})$ is a seminormed vector space then we shall always equip it with the canonical topology that is induced by the pseudometric $(x,y)\mapsto \norm{x-y}$. Note that this topology need not be Hausdorff; indeed, it is Hausdorff if and only if $\{0\}$ is a closed subset of~$V$. Quotienting $V$ out by the subspace $\{x\in V\st \norm{x}=0\}$, we obtain a normed space which we refer to as the \dt{Hausdorff\-ifi\-cation of $V$}.

Just as for normed spaces, a bounded linear map between seminormed spaces is continuous. It follows that if $E$ and $F$ are seminormed spaces and there exist bounded linear, mutually inverse maps $S:E \to F$ and $T:F \to E$, then $E$ and $F$ are not just isomorphic as vector spaces but are homeomorphic as topological spaces.

\begin{defn}\label{dfn:Sns_iso}
Let $E$, $F$, $S$ and $T$ be as above. We say that $E$ and $F$ are \dt{isomorphic as seminormed spaces}, and that $S$ and $T$ are \dt{isomorphisms of seminormed spaces}\/.

In the case where $S$ and $T$ can be chosen to be \dt{isometries}, we shall (following \cite{DefFlor}) write $E \miso F$\/.
\end{defn}

\begin{remstar}
The point of labouring this definition is that \emph{a continuous linear bijection from one seminormed space onto another need not be a homeomorphism}, even when both spaces are complete. An easy -- albeit artificial -- example is provided by the identity map $\iota: (V,\norm{\blank}) \to (V, \norm{\blank}_0)$, where $\norm{\blank}$ is a not-identically-zero seminorm on $V$ and $\norm{\blank}_0$ denotes the zero seminorm; clearly $\iota$ is norm-decreasing and hence is continuous, but it cannot be a homeomorphism since the topology induced by $\norm{\blank}_0$ is the indis\-crete one.
\end{remstar}

\begin{notn}
If $E$ and $F$ are \emph{Banach spaces} then we shall denote the \dt{projective tensor product} of $E$ and $F$ by $E\ptp F$. For the definition of $\ptp$ and a gentle account of its basic properties, see \cite[Ch.~2]{Ryan_TP}.

If $\psi_1: E_1\to F_1$ and $\psi_2:E_2\to F_2$ are bounded linear maps between Banach spaces, we shall write $\psi_1\ptp \psi_2$ for the bounded linear map $E_1\ptp E_2\to F_1\ptp F_2$ that is defined by
\[ (\psi_1\ptp \psi_2)(x_1\tp x_2) \defeq \psi_1(x_1)\tp \psi_2(x_2)\quad\quad(x_1\in E_1, x_2\in E_2). \]
\end{notn}

\subsubsection*{Modules over a Banach algebra}
If $A$ is a Banach algebra then our definition of a left Banach $A$-module is the standard one: we require that the action of $A$ is continuous but do not assume that it is necessarily contractive. We shall assume the reader is familiar with the definition of left, right and two-sided Banach modules: for details see the introductory sections of \cite{Hel_HBTA}.

Throughout this article the phrase `$A$-module map' will be used to mean `map preserving $A$-module structure'. \emph{In particular, $A$-module maps are always linear}\/. (Alternative names for the same concept include `$A$-module morphism', or `$A$-module homo\-morphism; the terms seem to be used interchangeably in many accounts of ring theory, and we have merely chosen the shortest one.)


\begin{notn}
We fix notation for some familiar categories which will be referred to later.
$\Ban$ will denote the category whose objects are Banach spaces and whose morphisms are the continuous linear maps between Banach spaces.

If $A$ is a Banach algebra then we denote by $\LMod{A}$, $\RMod{A}$ the categories of left and right Banach $A$-modules respectively; in both cases the morphisms are taken to be the bounded left (respectively right) $A$-module maps. If $B$ is another Banach algebra then we let $\LRMod{A}{B}$ denote the category of Banach $A$-$B$-bimodules and $A$-$B$-bimodule maps.

If $A$ and $B$ are unital Banach algebras, then the corresponding categories of \emph{unit-linked} modules and module maps will be denoted by $\LunMod{A}$, $\RunMod{A}$ and $\LRunMod{A}{B}$ respectively.
\end{notn}

\end{subsection}

\begin{subsection}[Hochschild (co)homology]{Hochschild homology and cohomology for Banach algebras}\label{s:Hoch-coho}
There are several accounts of the basic definitions that we need: see
\cite{Hel_HBTA} for instance. However, we need some finer detail which carries over directly from the purely algebraic setting but seems not to be stated explicitly in the Banach algebraic setting.

We therefore briefly set out the relevant definitions, which also allows us to fix notation for what follows.

\begin{defn}
Let $A$ be a Banach algebra (not necessarily unital) and let $M$ be a Banach $A$-bimodule. For $n \geq 0$ we define
\begin{equation}
\begin{aligned}
\Ho{C}{n}(A,M) & \defeq M\ptp A^{\ptp n} \\
 \Co{C}{n}(A,M) & \defeq \{ \text{bounded, $n$-linear maps} \; \overbrace{A\times\ldots\times A}^n \to M \} \\
\end{aligned}
\end{equation}

For $0\leq i\leq n+1$ the \dt{face maps} $\face[n]{i}:\Ho{C}{n+1}(A,M)\to\Ho{C}{n}(A,M)$ are the contractive linear maps given by
\[ \face[n]{i}(x\tp a_1\tp\ldots\tp a_{n+1}) 
 = \left\{\begin{aligned}
      xa_1\tp a_2\tp \ldots\tp a_{n+1} & \quad\text{ if $i=0$} \\
      x\tp a_1\tp\ldots\tp a_i a_{i+1}\tp \ldots\tp a_{n+1} & \quad\text{ if $1\leq i \leq n$} \\
    a_{n+1}x\tp a_1 \tp \ldots \tp a_n & \quad\text{ if $i=n+1$}
 \end{aligned} \right. \]
and the \dt{Hochschild boundary operator} $\bdy_n:\Ho{C}{n+1}(A,M)\to\Ho{C}{n}(A,M)$ is given by
\[ \bdy_n = \sum_{j=0}^{n+1} (-1)^j \face[n]{j} \;. \]
With these definitions, the Banach spaces $\Ho{C}{n}(A,M)$ assemble into a chain complex
\[ \ldots \lTo^{\bdy_{n-1}} \Ho{C}{n}(A,M) \lTo^{\bdy_n} \Ho{C}{n+1}(A,M) \lTo^{\bdy_{n+1}} \ldots \]
called the \dt{Hochschild chain complex} of $(A,M)$.

Dually, the Banach spaces $\Co{C}{n}(A,M)$ assemble into a cochain complex
\[ \ldots \rTo^{\dif^{n-1}} \Co{C}{n}(A,M) \rTo^{\dif^n} \Co{C}{n+1}(A,M) \rTo^{\dif^{n+1}} \ldots \]
(the \dt{Hochschild cochain complex} of $(A,M)$), where the \dt{Hochschild coboundary operator} $\dif$ is given by
\[ \dif^n\psi(a_1,\ldots, a_{n+1}
 = \left\{\begin{aligned}
      & a_1\psi a_2,\ldots, a_{n+1}) \\
    + & \sum_{j=1}^n (-1)^j \psi(a_1, \ldots, a_ja_{j+1}, \ldots, a_{n+1})\\
    + & (-1)^{n+1} \psi(a_1,\ldots, a_n)a_{n+1}
 \end{aligned} \right.\]

We let
\begin{equation}
\begin{aligned}
\Ho{Z}{n}(A,M) & \defeq \Ker \bdy_{n-1} & \text{(the space of \dt{$n$-cycles})} \\
\Ho{B}{n}(A,M) & \defeq \Image \bdy_n & \text{(the space of \dt{$n$-boundaries})} \\
\Ho{H}{n}(A,M) & \defeq  \frac{\Ho{Z}{n}(A,M) }{\Ho{B}{n}(A,M) } & \text{(the \dt{$n$th Hochschild homology group}\index{Hochschild homology})} \\
\end{aligned}
\end{equation}
Similarly,
\begin{equation}
\begin{aligned}
\Co{Z}{n}(A,M) & \defeq \Ker \dif_n & \text{(the space of \dt{$n$-cocycles})} \\
\Co{B}{n}(A,M) & \defeq \Image \dif_{n-1} & \text{(the space of \dt{$n$-coboundaries})} \\
\Co{H}{n}(A,M) & \defeq  \frac{\Co{Z}{n}(A,M) }{\Co{B}{n}(A,M) } & \text{(the \dt{$n$th Hochschild cohomology group})} \\
\end{aligned}
\end{equation}
\end{defn}
\begin{remstar}
In the literature the spaces defined above are often referred to as the space of \emph{bounded} $n$-cycles, \emph{continuous} $n$-cocycles, etc. and the resulting homology and cohomology groups are then called the \dt{continuous} Hochschild homology and cohomology groups, respectively, of $(A,M)$. We have chosen largely to omit these adjectives as we never deal with the purely algebraic Hochschild cohomology of Banach algebras.

However, we shall on occasion refer to the purely algebraic theory: the corresponding spaces of chains and cochains on a given algebra will be denoted by $\Ho[\alg]{C}{*}$ and
 $\Co[\alg]{C}{*}$\/. For a condensed summary of the relevant definitions see \cite[Ch.~9]{Weibel}.
\end{remstar}

\end{subsection}

\begin{subsection}{Symmetric coefficients}\label{ss:symmetric}
For commutative Banach algebras it is rather natural to focus on those coefficient modules $M$ which are \dt{symmetric}, i.e.~such that $am=ma$ for all $a\in A$ and all $m\in M$\/.  In this context the following observation will prove useful, even if it seems rather trivial at first.

\begin{propn}\label{p:CBA_Hoch}
Let $A$ be a commutative Banach algebra and let $M$ be a \emph{symmetric} Banach $A$-bimodule. For each $n$, regard $\Ho{C}{n}(A,M)$ and $\Co{C}{n}(A,M)$ as left Banach $A$-modules, via the actions
\[ c\cdot (m\tp a_1\tp\ldots\tp a_n) \defeq (c\cdot m)\tp a_1\tp\ldots\tp a_n \quad\quad(m\in M ; c, a_1, \ldots, a_n\in A) \]
and
\[ (c\cdot T)(a_1,\ldots, a_n) \defeq c\cdot [T(a_1, \ldots, a_n)]\]
respectively. Then the boundary maps $\bdy_n:\Ho{C}{n+1}(A,M)\to \Ho{C}{n}(A,M)$ and the coboundary maps $\dif_n:\Co{C}{n}(A,M)\to\Co{C}{n+1}(A,M)$ are $A$-module maps.

In particular, the Hochschild chain complex
\[ \Ho{C}{0}(A,\cu{A})\lTo \Ho{C}{1}(A,\cu{A})\lTo\Ho{C}{2}(A,\cu{A}) \lTo \ldots\]
is a complex of Banach $A$-modules, and we have the following isometric isomorphisms of chain complexes:
\[ \begin{aligned} \Ho{C}{*}(A,M) & \miso M_R\ptp_{\cu{A}} \Ho{C}{*}(A,\cu{A}) \\
\Co{C}{*}(A,M) & \miso \lHom{(\cu{A})} \left(\Ho{C}{*}(A,\cu{A}), M_L\right)  \end{aligned} \]
where $M_L$ and $M_R$ are the one-sided modules obtained by restricting the action on $M$ to left and right actions respectively.
\end{propn}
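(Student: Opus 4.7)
\medskip
\noindent\textbf{Proof proposal.} The proof naturally splits into two distinct tasks: verifying the $A$-module claims for the (co)boundary maps, and then factorising the chain and cochain complexes through $\Ho{C}{*}(A,\cu{A})$.

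For the first task, I would simply apply the defining formulas. Given $c\in A$, compare $c\cdot \bdy_n(m\tp a_1\tp\ldots\tp a_{n+1})$ with $\bdy_n(c\cdot(m\tp a_1\tp\ldots\tp a_{n+1}))$ term by term. All the interior face maps $\face[n]{i}$ for $1\leq i\leq n$ leave the leftmost tensor factor untouched, so they manifestly commute with left multiplication by $c$. The face $\face[n]{0}$ gives $(cm)a_1 = c(ma_1)$ by associativity of the bimodule action, which is fine without needing symmetry. The genuinely delicate term is $\face[n]{n+1}$: here we must check that $a_{n+1}(cm) = c(a_{n+1}m)$, and this is exactly the point where symmetry of $M$ and commutativity of $A$ intervene, since $a_{n+1}(cm)=(a_{n+1}c)m = (ca_{n+1})m = c(a_{n+1}m)$. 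The cochain calculation is entirely analogous: the only nontrivial verification is that $c\cdot[\psi(a_1,\ldots,a_n)a_{n+1}] = [c\cdot\psi(a_1,\ldots,a_n)]a_{n+1}$, which again uses symmetry of $M$ to commute the $c$ past $a_{n+1}$.

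For the homology identification, I would build the chain isomorphism one degree at a time. Since $\cu{A}$ is unital and $M$ is (implicitly, in view of the author's remark of August 2008) unit-linked as a $\cu{A}$-module, there is a canonical isometric isomorphism $M_R\ptp_{\cu{A}} \cu{A} \miso M_R$ given by the action map, and hence
\[ M_R\ptp_{\cu{A}} \Ho{C}{n}(A,\cu{A}) \;=\; M_R\ptp_{\cu{A}}\bigl(\cu{A}\ptp A^{\ptp n}\bigr) \;\miso\; M_R\ptp A^{\ptp n} \;=\; \Ho{C}{n}(A,M), \]
sending $m\ptpR{\cu{A}} (c\tp a_1\tp\ldots\tp a_n)$ to $(mc)\tp a_1\tp\ldots\tp a_n$. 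It then remains to check that this isomorphism intertwines the boundary map of the complex $M_R\ptp_{\cu{A}}\Ho{C}{*}(A,\cu{A})$ (defined by $\sid\ptpR{\cu{A}}\bdy$, which makes sense by the $A$-module claim already established) with the Hochschild boundary of $\Ho{C}{*}(A,M)$. This is a routine term-by-term comparison, with the only subtle step being that the move of $c$ from the first factor of the chain into the coefficient $mc$ swaps the role of $\face{0}$ with that of $\face{n+1}$; here the symmetry of $M$ as a bimodule is precisely what is needed.

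For the cohomology identification, I would use the adjunction (for any Banach space $X$)
\[ \lHom{(\cu{A})}\bigl(\cu{A}\ptp X,\; M_L\bigr) \miso \mathcal{L}(X,M) \]
given by restriction to $\id\tp X$ (with inverse $\psi\mapsto\bigl(c\tp x\mapsto c\cdot\psi(x)\bigr)$). Specialising to $X=A^{\ptp n}$ and invoking the standard isometric identification $\mathcal{L}(A^{\ptp n},M)\miso \Co{C}{n}(A,M)$ yields the desired isomorphism in each degree; compatibility with the coboundaries follows from a direct check, again using symmetry of $M$ to handle the outermost terms of $\dif^n$. I expect no conceptual obstacle beyond bookkeeping, provided one keeps straight throughout that (i) tensoring with $\cu{A}$ over $\cu{A}$ collapses to the identity only because $M$ is unit-linked, and (ii) the symmetry assumption on $M$ is invoked each time $c$ has to pass through the rightmost algebra factor. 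This is precisely the hidden hypothesis flagged in the author's Remark of August 2008.
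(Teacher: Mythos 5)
Your proposal is correct and matches the paper's approach, which dismisses the proof as ``immediate from the definition'' of the (co)boundary operators; you have simply filled in the routine verification. One small inaccuracy worth flagging: the equalities you write for the outermost face terms, e.g.\ $a_{n+1}(cm)=(a_{n+1}c)m=(ca_{n+1})m=c(a_{n+1}m)$, use only commutativity of $A$ together with the bimodule axioms, so symmetry of $M$ is not actually what makes the $A$-module claim work --- symmetry enters only later, in establishing the identifications $\Ho{C}{*}(A,M)\miso M_R\ptp_{\cu{A}}\Ho{C}{*}(A,\cu{A})$ and $\Co{C}{*}(A,M)\miso\lHom{(\cu{A})}(\Ho{C}{*}(A,\cu{A}),M_L)$, where a left and a right action on $M$ have to be interchanged.
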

The proposition is really just a statement about the boundary and coboundary operators, and its proof is immediate from their definition.


The idea to introduce this extra structure on the Hochschild chain complex is not at all original, but there seems to have been no systematic pursuit of this line of enquiry in the Banach-algebraic setting. One theme of this article is that for commutative Banach algebras, simplicial homology ought to control cohomology with symmetric coefficients: one may think of this as a kind of `universal coefficient theorem'.

In the purely algebraic setting this vague statement can be made into a precise result, which asserts that for any unital commutative algebra $\sA$ over a field and any symmetric $\sA$-bimodule ${\sf M}$, there is a spectral sequence
\begin{equation}\label{eq:ss-Hochsym}
\algExt^p_{\sA}\left(\Ho[\alg]{H}{q}(\sA, \sA),\, {\sf M} \right) \Rightarrow_p \Co[\alg]{H}{p+q}(\sA,{\sf M}) 
\end{equation}
which computes Hochschild cohomology in terms of simplicial homology of $\sA$ and the properties of $M$ as a \emph{one-sided} $\sA$-module. More background remarks can be found in \cite[\S3.2.1]{YC_PhD}.

\end{subsection}
\end{section}

\begin{section}{The Hodge decomposition of a commutative algebra}\label{s:BGSdefs}
The `Hodge decomposition' of the title gives a decomposition of the Hochschild homology and cohomology of a commutative algebra in characteristic zero. It was first introduced in Gerstenhaber and Schack's paper \cite{GS_Hodge}; for some of the history and context behind that paper, the reader is recommended to consult Gerstenhaber's excellent survey article~\cite{G_Barr}.

We shall follow the exposition in \cite[\S 9.4.3]{Weibel} which provides a terse guide. More details can be found in Loday's book \cite{Lod_CH}.

\begin{remstar}
This section consists mostly of standard material from commutative algebra, with the adjectives `Banach' or `bounded' inserted in the obvious places. However, there do not seem to be any explicit references for the Banach-algebraic case. We shall therefore endeavour to give precise statements, even when the proofs are trivial; the alternative approach would have led to tiresome repetition of the phrase `just as in the purely algebraic case, the reader may check that \ldots'. 
\end{remstar}

Let us start in the setting of $\Cplx$-algebras. Fix $n \in \Nat$: then for any $\Cplx$-vector space $V$ the permutation group $\Sym{n}$ acts on $V^{\tp n}$. This induces an action of the group algebra $\Rat\Sym{n}$ on the vector space $V^{\tp n}$: we shall identify elements of $\Rat\Sym{n}$ with the linear maps $V^{\tp n}\to V^{\tp n}$ that they induce. We recall also that if $\sB$ is a $\Cplx$-algebra and $\sf M$ a $\sB$-bimodule then $\Ho[\alg]{C}{*}(\sB,\sf M)$ denotes the corresponding Hochschild chain complex.

With this notation, we can now state the so-called `Hodge decomposition' of Gerstenhaber and Schack in a form convenient for us.
\begin{thm}[Hodge decomposition for commutative $\Cplx$-algebras]\label{t:BGS_alg}
Let $\sB$ be a commutative $\Cplx$-algebra. For each $n \geq 1$ there are pairwise orthogonal idempotents in $\Rat\Sym{n}$, denoted $\BGS[n]{1},\BGS[n]{2}, \ldots$, which satisfy
\begin{enumerate}
\renewcommand{\labelenumi}{{\rm(}\roman{enumi}{\rm)}}
\renewcommand{\theenumi}{{\rm(}\roman{enumi}{\rm)}}
\item $\BGS[n]{j}=0$ for all $j > n$;
\item $\sum_i\BGS[n]{i} = \id[\Rat\Sym{n}]$;
\end{enumerate}
\emph{and} are such that for each $i \in \Nat$, $\sid\tp \BGS[\blob]{i}$ acts a chain map on $\Ho[\alg]{C}{n}(\sB,\cu{\sB})$, i.e.~the diagram shown in Figure \ref{fig:BGSchainmap}
commutes for each $i, n\in\Nat$.
\end{thm}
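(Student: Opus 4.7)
The plan is to construct the idempotents $\BGS[n]{i}$ as the joint eigenspace projections for a commuting family of ``shuffle'' operators, and then to deduce the chain-map property from the fact that each shuffle operator is itself a chain map when $\sB$ is commutative. All of this lives inside $\Rat\Sym{n}$ and the Hochschild complex of a commutative algebra; the Banach structure of $\sB$ plays no role in this purely algebraic statement.

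More precisely, for each integer $\lambda\geq 1$ and each $n\geq 1$, I would introduce the element $\Phi^{\lambda}_n\in\Rat\Sym{n}$ obtained by summing, with appropriate signs, the $(j_1,\ldots,j_\lambda)$-shuffle permutations over all compositions $j_1+\cdots+j_\lambda=n$. (These are the analogues of the Adams operations $\psi^\lambda$, and one may take $\Phi^{\lambda}_n$ to be what Loday calls the $\lambda$-th shuffle power.) Two classical facts I would then invoke are: (a) the $\Phi^\lambda_n$ commute among themselves as $\lambda$ varies, for each fixed $n$; and (b) $\Phi^\lambda_n$ is diagonalisable, with spectrum contained in $\{\lambda,\lambda^2,\ldots,\lambda^n\}$. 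Granted (a) and (b), the simultaneous eigenspace decomposition of the commuting family $\{\Phi^\lambda_n\}_{\lambda\geq 2}$ splits $\Rat\Sym{n}$ as a direct sum $V^{(n)}_1\oplus\cdots\oplus V^{(n)}_n$, where $V^{(n)}_i$ is the common $\lambda^i$-eigenspace of $\Phi^\lambda_n$. I define $\BGS[n]{i}$ to be the projection onto $V^{(n)}_i$; this gives orthogonal idempotents in $\Rat\Sym{n}$ summing to $\id[\Rat\Sym{n}]$, and the vanishing $\BGS[n]{j}=0$ for $j>n$ follows because only the eigenvalues $\lambda,\ldots,\lambda^n$ can occur.

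For the chain-map property, the key point is the classical theorem of Eilenberg--MacLane that the shuffle product is a chain map on the Hochschild complex of a commutative algebra. Spelling this out in the present language, one checks directly from the definition of $\bdy_n$ that $\sid\tp \Phi^\lambda_\bullet$ commutes with the Hochschild boundary on $\Ho[\alg]{C}{\bullet}(\sB,\cu{\sB})$ whenever $\sB$ is commutative; this is where commutativity is used, since it allows one to pair and cancel the terms coming from the inner faces $\face[]{i}$ for $1\leq i\leq n$ against the outer faces $\face[]{0}$ and $\face[]{n+1}$ across a shuffle. Since each $\BGS[n]{i}$ is a polynomial in $\Phi^\lambda_n$ (e.g.\ via a Lagrange interpolation in the known eigenvalues $\lambda,\ldots,\lambda^n$), it follows that $\sid\tp \BGS[\blob]{i}$ is also a chain map, which is exactly the commutativity of the diagram in Figure~\ref{fig:BGSchainmap}.

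The main technical obstacle I anticipate is the eigenvalue computation in~(b): one must argue that the characteristic polynomial of $\Phi^\lambda_n$ divides $\prod_{i=1}^n (X-\lambda^i)$, which is a genuine combinatorial calculation about shuffles in $\Sym{n}$. However, this is entirely a statement about the group algebra $\Rat\Sym{n}$ and carries over verbatim from the algebraic literature (for instance \cite[\S 9.4.3]{Weibel} or Loday's treatment in \cite{Lod_CH}); the content of what needs to be proved is identical to the classical case and can be imported directly.
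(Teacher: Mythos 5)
Your proposal is correct and matches the paper's approach: the paper's proof of Theorem~\ref{t:BGS_alg} is simply the citation ``See~\cite[\S 9.4.3]{Weibel}'', and your sketch via shuffle/Adams operations, their commuting eigenvalue spectrum $\{\lambda,\ldots,\lambda^n\}$, and the Eilenberg--MacLane chain-map property of shuffles for commutative algebras is a faithful summary of the argument presented there and in Loday. One small point to tighten in your last step: the Lagrange polynomial expressing $\BGS[n]{i}$ in terms of $\Phi^\lambda_n$ depends on $n$, so the chain-map conclusion is not literally of the form ``a polynomial in a chain map is a chain map''; the clean deduction is that, since $\bdy_n$ intertwines $\sid\tp\Phi^\lambda_{n+1}$ with $\sid\tp\Phi^\lambda_n$ and these are diagonalisable with the stated eigenvalues, $\bdy_n$ carries the $\lambda^i$-eigenspace in degree $n+1$ into the $\lambda^i$-eigenspace in degree $n$, which is exactly the commutativity of Figure~\ref{fig:BGSchainmap}.
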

\begin{figure}
\[ \begin{diagram}
 \Ho[\alg]{C}{n-1}(\sB,\cu{\sB}) & \lTo^{\bdy_{n-1}} & \Ho[\alg]{C}{n}(\sB,\cu{\sB}) \\
 \dTo^{\sid\tp\BGS[n-1]{i}} & & \dTo_{\sid\tp\BGS[n]{i}} \\
 \Ho[\alg]{C}{n-1}(\sB,\cu{\sB}) & \lTo_{\bdy_{n-1}} & \Ho[\alg]{C}{n}(\sB,\cu{\sB}) \\
\end{diagram} \]
\caption{Compatibility of idempotents with the boundary map}
\label{fig:BGSchainmap}
\end{figure}
\begin{proof}
See \cite[\S 9.4.3]{Weibel}.
\end{proof}

\begin{remstar}
 We have followed the notation from \cite{GS_Hodge}; what we have written as $\BGS[n]{i}$ is often denoted elsewhere in the literature by $e_n^{(i)}$.
\end{remstar}

The following is then obvious, and is stated for reference.
\begin{thm}[Hodge decomposition for commutative Banach algebras]\label{t:BGS_BAlg}
Let $A$ be a commutative Banach algebra. Each $\sid\ptp\BGS[n]{i}$ acts as a bounded linear projection on $\Ho{C}{n}(A,\cu{A})$; moreover, for fixed $i$ the family $\sid\ptp\BGS[*]{i}$ acts as a chain map on $\Ho{C}{*}(A,\cu{A})$.
\end{thm}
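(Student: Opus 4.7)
The plan is to transfer the algebraic Hodge decomposition of Theorem~\ref{t:BGS_alg} to the Banach-algebraic setting by a density-and-continuity argument, exploiting the fact that the idempotents $\BGS[n]{i}$ are finite $\Rat$-linear combinations of elements of $\Sym{n}$.

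First I would observe that any permutation $\sig\in\Sym{n}$ acts on the algebraic tensor product $A^{\tp n}$ by $a_1\tp\ldots\tp a_n\mapsto a_{\sig(1)}\tp\ldots\tp a_{\sig(n)}$, and that by the universal property of the projective tensor product this extends to an isometric isomorphism on $A^{\ptp n}$. Since $\BGS[n]{i}\in\Rat\Sym{n}$ is a finite rational linear combination of such permutations, the operator $\BGS[n]{i}$ extends to a bounded linear map on $A^{\ptp n}$, and hence $\sid\ptp \BGS[n]{i}$ extends to a bounded linear map on $\cu{A}\ptp A^{\ptp n} = \Ho{C}{n}(A,\cu{A})$.

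Next I would promote the algebraic identities of Theorem~\ref{t:BGS_alg} to the completed setting. The orthogonality and idempotency relations $\BGS[n]{i}\BGS[n]{j} = \delta_{ij}\BGS[n]{i}$ hold as identities of bounded linear maps on the dense subspace $\cu{A}\tp A^{\tp n}\subset \Ho{C}{n}(A,\cu{A})$; since both sides are continuous on $\Ho{C}{n}(A,\cu{A})$, they agree everywhere. In particular each $\sid\ptp\BGS[n]{i}$ is a bounded linear projection on $\Ho{C}{n}(A,\cu{A})$. The chain-map property — commutativity of the diagram in Figure~\ref{fig:BGSchainmap} with the algebraic chain complex replaced by the Banach-algebraic one — is handled identically: the commutation of $\bdy_{n-1}$ with $\sid\ptp\BGS[n]{i}$ holds on $\cu{A}\tp A^{\tp n}$ by Theorem~\ref{t:BGS_alg}, and both sides are bounded linear maps $\Ho{C}{n}(A,\cu{A})\to\Ho{C}{n-1}(A,\cu{A})$, so the identity extends by continuity and density.

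The only point that requires any care — and it is purely cosmetic — is checking that the action of $\Rat\Sym{n}$ on the algebraic tensor product really does extend continuously to the projective tensor product with no change in the combinatorial formulas; this is immediate from the fact that each permutation is an isometry on $A^{\ptp n}$ and that addition and scalar multiplication are jointly continuous. There is no substantive obstacle beyond this routine density argument, which is why the paper labels the statement ``obvious''.
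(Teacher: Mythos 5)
Your argument is exactly the one the paper intends: boundedness of $\sid\ptp\BGS[n]{i}$ because it is a finite rational combination of the (isometric) permutation operators, and then extension of the algebraic identities of Theorem~\ref{t:BGS_alg} from the dense algebraic tensor product by continuity. This matches the paper's proof, just with the routine density-and-continuity step written out in full.
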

\begin{proof}
It is clear that $\sid\ptp\BGS[n]{i}$ acts boundedly on the Banach space $\Ho{C}{n}(A,\cu{A})$ -- and that the norm of the induced linear projection is bounded by some constant depending only on $i$ and $n$.

The remaining properties follow now by continuity, using Theorem~\ref{t:BGS_alg} and the density of the algebraic tensor product inside the projective tensor product.
\end{proof}

In the algebraic case we could have replaced $\cu{\sB}$ with any symmetric $\sB$-bimodule. The same is true in the Banach context.
\begin{coroll}\label{c:BGS_arb-coeff}
Let $A$ be a commutative Banach algebra and let $M$ be a symmetric Banach $A$-bimodule. Then for each $i$, $\sid_M\ptp\BGS[*]{i}$ is a bounded chain projection on $\Ho{C}{*}(A,M)$, and `pre-composition with $\BGS[*]{i}$' is a bounded chain projection on $\Co{C}{*}(A,M)$. 
\end{coroll}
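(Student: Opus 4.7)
The plan is to reduce the statement to Theorem~\ref{t:BGS_BAlg} via the isomorphisms furnished by Proposition~\ref{p:CBA_Hoch}. The key observation is that each map $\sid\ptp\BGS[n]{i}$ acts only on the tensor factors $a_1\ptp\ldots\ptp a_n$, leaving the ``coefficient'' factor untouched. In particular, on $\Ho{C}{n}(A,\cu{A}) = \cu{A}\ptp A^{\ptp n}$ it commutes with the left action of $\cu{A}$ on the first tensor factor used in the statement of Proposition~\ref{p:CBA_Hoch}; hence $\sid\ptp\BGS[n]{i}$ is a bounded left $\cu{A}$-module map, and by Theorem~\ref{t:BGS_BAlg} the family $(\sid\ptp\BGS[*]{i})$ is moreover a chain map.

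Having established this, I would carry out two parallel arguments. For homology, apply the functor $M_R\ptp_{\cu{A}}(-)$ to the chain endomorphism $\sid\ptp\BGS[*]{i}$ of $\Ho{C}{*}(A,\cu{A})$; the result is a bounded chain endomorphism of $M_R\ptp_{\cu{A}}\Ho{C}{*}(A,\cu{A})$, which under the isometric isomorphism of Proposition~\ref{p:CBA_Hoch} is identified with $\sid_M\ptp\BGS[*]{i}$ on $\Ho{C}{*}(A,M)$. Idempotency transports through the functor, and boundedness is inherited (the norm on each degree being controlled by that of $\BGS[n]{i}\in\Rat\Sym{n}$, regarded as a finite rational combination of permutation operators). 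Dually, applying $\lHom{\cu{A}}(-,M_L)$ to the same chain map gives a bounded cochain endomorphism of $\lHom{\cu{A}}(\Ho{C}{*}(A,\cu{A}),M_L)$, which under the second isomorphism of Proposition~\ref{p:CBA_Hoch} is precisely precomposition with $\BGS[*]{i}$ on $\Co{C}{*}(A,M)$; again idempotency and boundedness are automatic.

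The only step requiring any thought is verifying that $\sid\ptp\BGS[n]{i}$ genuinely is a left $\cu{A}$-module map in the sense that Proposition~\ref{p:CBA_Hoch} demands, so that the functors $M_R\ptp_{\cu{A}}(-)$ and $\lHom{\cu{A}}(-,M_L)$ may be applied. This is essentially immediate from the definitions: the $\cu{A}$-action on $\Ho{C}{n}(A,\cu{A})$ is multiplication on the $\cu{A}$-factor, while $\BGS[n]{i}$ is a linear combination of permutations of the remaining $n$ factors, so the two operations manifestly commute. Given this, no further calculation is needed — the corollary follows by functoriality of $\ptp_{\cu{A}}$ and $\lHom{\cu{A}}$ from the known chain-projection property of $\sid\ptp\BGS[*]{i}$ on $\Ho{C}{*}(A,\cu{A})$.
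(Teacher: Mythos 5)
Your proof is correct and takes essentially the same route as the paper: both reduce to the case of $\cu{A}$-coefficients and then transport the chain-projection property by applying $M\ptpR{\cu{A}}(\blank)$ and $\lHom{\cu{A}}(\blank,M)$ to the commuting diagram from Theorem~\ref{t:BGS_BAlg}, using the chain isomorphisms of Proposition~\ref{p:CBA_Hoch}. The only (minor) difference is that you explicitly verify that $\sid\ptp\BGS[n]{i}$ is a $\cu{A}$-module map, a fact the paper's proof invokes implicitly when asserting that all arrows in its diagram are $\cu{A}$-module maps.
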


\begin{proof}
Since $\BGS[n]{i}$ acts as a bounded linear projection on $A^{\ptp n}$, $\sid_M\ptp\BGS[n]{i}$ acts as a bounded linear projection on $M\ptp A^{\ptp n}=\Ho{C}{n}(A,M)$\/, and pre-composition with $\BGS[n]{i}$ acts as a bounded linear projection on $\Co{C}{n}(A,M)$. Therefore it only remains to show that these two maps are chain maps on the Hochschild chain and cochain complexes respectively.

This is essentially a trivial deduction from the case where $M=\cu{A}$. In more detail: recall (Proposition~\ref{p:CBA_Hoch}) that there are isomorphisms of Banach complexes
\begin{subequations}
\begin{align}
\label{eq:UCT_ho}\Ho{C}{*}(A,M) & \miso M\ptp_{\cu{A}} \Ho{C}{*}(A,\cu{A}) \\
\label{eq:UCT_co}\Co{C}{*}(A,M) & \miso \lHom{(\cu{A})} \left(\Ho{C}{*}(A,\cu{A}), M\right)  \;.
\end{align}
\end{subequations}
We have seen that for each $n$ and each $i$ there is a commuting diagram
\begin{equation}\label{diag:BGS_BAlg}
\begin{diagram}
\Ho{C}{n-1}(A,\cu{A}) & \lTo^{\bdy_{n-1}} & \Ho{C}{n}(A,\cu{A}) \\
 \dTo^{\sid\ptp\BGS[n-1]{i}} & & \dTo_{\sid\ptp\BGS[n]{i}} \\
\Ho{C}{n-1}(A,\cu{A}) & \lTo_{\bdy_{n-1}} & \Ho{C}{n}(A,\cu{A}) \\
\end{diagram}
\end{equation}
in which all arrows are continuous $\cu{A}$-module maps. Hence applying the functor $M\ptpR{\cu{A}}\blank$ and using Equation \eqref{eq:UCT_ho} yields a commuting diagram of Banach spaces:
\[ \begin{diagram}
\Ho{C}{n-1}(A,M) & \lTo^{\bdy_{n-1}} & \Ho{C}{n}(A,M) \\
 \dTo^{\sid\ptp\BGS[n-1]{i}} & & \dTo_{\sid\ptp\BGS[n]{i}} \\
\Ho{C}{n-1}(A,M) & \lTo_{\bdy_{n-1}} & \Ho{C}{n}(A,M) \\
\end{diagram} \]
as required. Similarly, applying the functor $\lHom{\cu{A}}(\blank, M)$ to Diagram \eqref{diag:BGS_BAlg} and using Equation \eqref{eq:UCT_co} gives a commuting diagram
\[ \begin{diagram}
\Co{C}{n-1}(A,M) & \rTo^{\dif^{n-1}} & \Co{C}{n}(A,M) \\
 \dTo^{\coBGS[n-1]{i}} & & \dTo_{\coBGS[n]{i}} \\
\Co{C}{n-1}(A,M) & \rTo_{\dif^{n-1}} & \Co{C}{n}(A,M) \\
\end{diagram} \]
and the proof is complete.
\end{proof}

\begin{defn}
Let $A$ be a commutative Banach algebra and $M$ a symmetric Banach $A$-bimodule. For $n \in \Nat$ and $i=1, \ldots, n$ we follow the notation of \cite{GS_Hodge} and write
\[ \begin{aligned} \Ho{C}{i,n-i}(A,M) & \defeq (\sid\ptp\BGS[n]{i})\Ho{C}{n}(A,M) \\
 \Co{C}{i,n-i}(A,M) & \defeq \coBGS[n]{i}\Co{C}{n}(A,M) \end{aligned} \]
where $\coBGS[n]{i}$ is defined to be `pre-composition with $\BGS[n]{i}$'.

Given a chain or cochain, we shall sometimes say that it is of \dt{BGS type $(i,n-i)$} if it lies in the corresponding summand $\Ho{C}{i,n-i}$ or $\Co{C}{i,n-i}$. We shall also sometimes refer to the projections $\sid\ptp\BGS[n]{i}$ and $\coBGS[n]{i}$ as the \dt{BGS projections} on homology and cohomology respectively. (This terminology comes from the survey article \cite{G_Barr}; the acronym `BGS' is for Barr-Gerstenhaber-Schack.)

Since $\sid\ptp\BGS{i}$ is a chain projection for each $i$, we have a decomposition of the chain complex $\Ho{C}{*}(A,M)$ into orthogonal summands; dually, the chain projections $(\coBGS{i})_{i\geq 1}$ yield a decomposition of the cochain complex $\Co{C}{*}(A,M)$ into orthogonal summands.
For both homology and cohomology the decomposition has $n$ summands in degree~$n$\/.

This is the so-called \dt{Hodge decomposition} of Hochschild (co)homology (the origin of the name is explained in \cite{G_Barr}\/).
\end{defn}

\begin{remstar}
Note that in passing, the proof of Corollary~\ref{c:BGS_arb-coeff}, shows that there are chain isomorphisms
\begin{subequations}
\begin{align}
\label{eq:UCT_BGSho}\Ho{C}{i,*}(A,M) & \miso M\ptp_{\cu{A}} \Ho{C}{i,*}(A,\cu{A}) \\
\label{eq:UCT_BGSco}\Co{C}{i,*}(A,M) & \miso \lHom{(\cu{A})} \left(\Ho{C}{i,*}(A,\cu{A}), M\right)  \;.
\end{align}
\end{subequations}
for any $i$ (the case $i=1$ will be used in some later calculations).
\end{remstar}

\subsection*{Harrison and Lie cohomology}
At first glance the various subscripts and superscripts may cloud the picture unnecessarily. It is therefore useful to have in mind a schematic diagram such as Figure~\ref{fig:BGS_schematic} (for cohomology).
\begin{figure}
\begin{diagram}[tight, width=2em]
 \boxed{\Co{C}{1}} &  & \boxed{\Co{C}{2}} & & \boxed{\Co{C}{3}} &  & \\
 \boxed{\Co{C}{1,0}} & \rTo & \boxed{\Co{C}{1,1}} & \rTo & \boxed{\Co{C}{1,2}} & \rTo  \\
    &   & \boxed{\Co{C}{2,0}}    & \rTo & \boxed{\Co{C}{2,1}} & \rTo  \\
    &   &    &   & \boxed{\Co{C}{3,0}} & \rTo  \\
\end{diagram}
\caption{Hodge decomposition for cohomology}
\label{fig:BGS_schematic}
\end{figure}
In this schematic, there are two distinguished parts of the Hodge decomposition: we may consider the bottom box in each column, or the top one. These 
components of the decomposition warrant names of their own.

We first consider the summands of BGS type $(n,n)$. While explicit formulas for the idempotents $\BGS[n]{i}$ are hard to work with in general, the idempotents $\BGS[n]{n}$ turn out to be familiar and tractable. We state the following without proof.

\begin{thm}\label{t:BGSalt}
For $n\geq 1$ we have
\begin{equation}
\BGS[n]{n}= \frac{1}{n!} \sum_{\sig \in \Sym{n}} (-1)^\sig \sig \in \Rat\Sym{n} 
\end{equation}
Thus
the summands $\Ho{C}{n,0}(A,M)$ and $\Co{C}{n,0}(A,M)$ turn out to be the spaces of alternating chains and cochains.
\end{thm}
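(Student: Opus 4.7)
The statement has two assertions: (a) an explicit closed form for the top idempotent $\BGS[n]{n}$ as the signed averaging in $\Rat\Sym{n}$, and (b) the identification of $\Ho{C}{n,0}(A,M)$ and $\Co{C}{n,0}(A,M)$ with the alternating (co)chains. The first assertion is a purely combinatorial fact about the Eulerian idempotents $\BGS[n]{i}$ in $\Rat\Sym{n}$ and is independent of any Banach-algebraic considerations; the second is then a direct computation given the first.

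For assertion (a), the plan is to exploit the eigenspace characterization of the $\BGS[n]{i}$. Specifically, for each $\lambda \in \Rat$ let $\psi_\lambda \in \Rat\Sym{n}$ be the element corresponding to the $\lambda$-th Adams operation on the tensor coalgebra (equivalently, the shuffle power map); the Eulerian idempotents are defined by the decomposition $\psi_\lambda = \sum_{i=1}^n \lambda^i\, \BGS[n]{i}$, and this decomposition together with orthogonality pins down each $\BGS[n]{i}$ uniquely. Taking $\lambda = -1$, one has $\psi_{-1} = \sum_{\sig \in \Sym{n}} (-1)^\sig \sig$ (the signed sum of all shuffles collapses to the signed sum over $\Sym{n}$). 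Matching coefficients in $\psi_{-1} = \sum_i (-1)^i \BGS[n]{i}$, together with the vanishing of $\BGS[n]{i}$ for $i > n$ and an induction on degree, would isolate the top component as $\BGS[n]{n} = \frac{1}{n!}\sum_\sig (-1)^\sig \sig$. The main obstacle here is that to make this argument rigorous one needs the full machinery of Solomon's descent algebra and the shuffle decomposition, which is standard but non-trivial; since the theorem is invoked from \cite{GS_Hodge} and \cite{Lod_CH}, the cleanest course is to quote the formula from \cite[Prop.~4.5.3]{Lod_CH} rather than reconstruct the whole theory.

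For assertion (b), granted the formula of (a), the proof is immediate by substitution. On the chain side, for $m \in M$ and $a_1,\ldots,a_n \in A$,
\[ (\sid\ptp\BGS[n]{n})(m\tp a_1\tp\ldots\tp a_n) = \frac{1}{n!}\sum_{\sig\in\Sym{n}} (-1)^\sig \, m \tp a_{\sig(1)} \tp \ldots \tp a_{\sig(n)}, \]
which is manifestly alternating in the tensor factors $a_1,\ldots,a_n$; conversely any alternating chain is fixed by this projection, so $\Ho{C}{n,0}(A,M) = (\sid\ptp\BGS[n]{n})\Ho{C}{n}(A,M)$ is precisely the space of alternating chains. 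Dually, for $\psi \in \Co{C}{n}(A,M)$,
\[ (\coBGS[n]{n}\psi)(a_1,\ldots,a_n) = \frac{1}{n!}\sum_{\sig\in\Sym{n}}(-1)^\sig \psi(a_{\sig(1)},\ldots,a_{\sig(n)}), \]
so $\Co{C}{n,0}(A,M)$ is the space of alternating cochains. Both identifications are visibly isometric once one uses the projective tensor norm on $A^{\ptp n}$ (there is no completion issue here, since the symmetrization operator extends continuously from the algebraic tensor product by Theorem~\ref{t:BGS_BAlg}).

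In summary, the substantive work is in assertion (a), which I would handle by citing the explicit formula from \cite{Lod_CH} after explaining its provenance via the $\psi_\lambda$ decomposition; assertion (b) is then a one-line substitution, and no analytic subtleties arise because the operator in question is a bounded finite-rank-per-orbit projection on $A^{\ptp n}$.
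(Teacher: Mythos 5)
The paper gives no proof here: it states the result as known and points the reader to Barr \cite{Barr_Harr} or Weibel \cite{Weibel}. Your proposal is therefore a genuinely different contribution in that it actually sketches a derivation, via Loday's Adams-operation characterisation $\psi_\lambda=\sum_i\lambda^i\,\BGS[n]{i}$, before falling back on a citation. That is a perfectly reasonable alternative route, and your part (b) — extracting the alternating (co)chains by substituting the formula and invoking Theorem~\ref{t:BGS_BAlg} for the continuous extension — is exactly the intended one-line deduction.

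One caveat on part (a): the step "take $\lambda=-1$ and match coefficients" is under-specified as written. A single evaluation of $\psi_\lambda$ yields only the one relation $\psi_{-1}=\sum_i(-1)^i\BGS[n]{i}$ and does not by itself isolate the top idempotent; you need either Lagrange interpolation through the distinct eigenvalues $\lambda,\lambda^2,\ldots,\lambda^n$, or the cleaner observation that $\BGS[n]{n}$ is the \emph{leading coefficient} of the polynomial $\psi_\lambda$ in $\lambda$, or simply the recognition of $\frac{1}{n!}\sum_\sigma(-1)^\sigma\sigma$ as the central idempotent of the sign representation together with a check that it is orthogonal to the lower Eulerian idempotents. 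Since you explicitly say the cleanest course is to quote the formula from Loday rather than reconstruct the descent-algebra combinatorics, this is not a hole in your argument, but I would avoid presenting the $\lambda=-1$ specialisation as if it alone pins down $\BGS[n]{n}$. With that caveat the proposal is sound, and it is more informative than the paper's bare citation.
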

For more details see \cite[Propn 2.1]{Barr_Harr} or \cite[Lemma 9.4.9]{Weibel}.

In light of this fact we adopt the following terminology.
\begin{defn}
The \dt{Lie component of degree $n$} is the space $\Co{C}{n,0}(A,M)$ of continuous, alternating $n$-cochains from $A$ to~$M$. 
\end{defn}

\begin{rem}
The name `Lie component' follows the discussion in \cite[Thms~5.9, 5.10]{GS_Hodge} which loosely says that for a commutative $\Rat$-algebra $\sB$ and symmetric bimodule~$\sf M$, $\Co[\alg]{H}{n,0}(\sB,\sf M)$ is isomorphic to the Lie algebra cohomology of the pair $(\sB, \sf M)$.
\end{rem}

We shall not discuss the Lie component in this article, save to point out that it was re\-discovered (under a different name) in Johnson's paper~\cite{BEJ_high}. The central notion of that paper was a definition of \dt{$n$-dimensional weak amenability}; in the language adopted here, a commutative Banach algebra $A$ is $k$-dimensionally weakly amenable if $\Co{H}{n,0}(A,M)=0$ for all $n \geq k$.

Instead, we shall focus in the rest of this paper on the other extreme, namely the spaces $\Co{H}{1,n-1}(A,M)$. These are known as the \dt{Harrison cohomology} groups of $(A,M)$, and will be discussed in more detail in the next section.

\end{section}

\begin{section}{Harrison homology and (co)homology}\label{s:Harrison}
\begin{defn}
The complex $\Co{C}{1,*}$ is called the \dt{Harrison summand} of the Hochschild chain complex, and its cohomology is called \dt{Harrison cohomology}. Dually, the complex $\Ho{C}{1,*}$ is the Harrison summand of the Hochschild chain complex, and its homology is called \dt{Harrison homology}\/.
\end{defn}

\begin{notn}
From here on, when focusing on the Harrison summand and not on the Hodge decomposition in general, we shall adopt the alternative notation $\HarC{C}{n}\defeq\Co{C}{1,n-1}$, $\HarH{C}{n}\defeq\Ho{C}{1,n-1}$,~etc.
\end{notn}

\begin{remstar}
Since $\BGS[2]{1}+\BGS[2]{2}=\sid$, we see that in degree~$2$ the Hodge decomposition coincides with the decomposition of (co)homology into symmetric and anti-symmetric summands (with the symmetric part being the Harrison summand).
\end{remstar}

In the purely algebraic setting, the complex of Harrison cochains was introduced and studied some 20 years before the general `Hodge decomposition' was formulated by Gerstenhaber and Shack. For more historical background we recommend the remarks in \cite{GS_Hodge} and the account in \cite{G_Barr}.

To give some idea of what we are aiming for in our main result (Theorem~\ref{t:Harr_of_Ak} below) we briefly discuss some aspects of Harrison cohomology in the purely algebraic setting. In Harrison's original 1962 paper \cite{HarrCo_62}, a \Kunneth-type theorem is stated:
\begin{equation}\label{eq:algHarr-of-tp}
\HarC[\alg]{H}{n}(\sA\tp\sB, {\sf M}) \iso \HarC[\alg]{H}{n}(\sA,{\sf M}) \oplus \HarC[\alg]{H}{n}(\sB, {\sf M}) \end{equation}
Harrison only gives the proof for degrees 1, 2 and 3: his proof involves explicit manipulation of cochains and ought to translate to the Banach-algebraic setting. However, the only proofs in the literature \emph{for general $n$} seem to rely on spectral-sequence arguments and the fact that the Harrison cohomology of a polynomial algebra in arbitrarily many variables vanishes in degrees $2$ and above (see Theorem~\ref{t:polyring_HH} below for more details). Since we do not know if the corresponding statement is true for the Banach algebra $\lp{1}(\Z_+^\infty)$, we have been unable to establish the Banach-algebraic version of \eqref{eq:algHarr-of-tp} in full generality: Theorem~\ref{t:Harr_of_Ak} provides evidence that \emph{some} Banach-algebraic version ought to be true.

\subsection*{Long exact sequences}
The Hodge decomposition of Hochschild (co)homology respects the usual long exact sequences associated to certain short exact sequences of coefficient modules.
 We shall only need this for the special case of Harrison (co)homology: the precise formulation is as follows.

\begin{lemma}[Long exact sequences for Harrison (co)homology]\label{l:Harr_LES}
Let $A$ be a commutative Banach algebra, and let $L \to M \to N$ be a short exact sequence of symmetric Banach $A$-bimodules which is split exact in $\Ban$. Then there are long exact sequences of Harrison homology
\[  0\leftarrow \HarH{H}{1}(A,N) \leftarrow \HarH{H}{1}(A,M) \leftarrow \HarH{H}{1}(A,L)\leftarrow \HarH{H}{2}(A,N) \leftarrow \ldots \]
and Harrison cohomology
\[ 0\rightarrow \HarC{H}{1}(A,L) \rightarrow \HarC{H}{1}(A,M) \rightarrow \HarC{H}{1}(A,N) \rightarrow \HarC{H}{2}(A,L) \rightarrow \ldots \]
\end{lemma}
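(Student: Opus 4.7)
The plan is to reduce to the standard zigzag / connecting-homomorphism construction of homological algebra, by producing short exact sequences of Harrison chain and Harrison cochain complexes of Banach spaces.

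First I would lift the given short exact sequence of coefficient modules to short exact sequences of Hochschild complexes. Because $L \to M \to N$ is split exact in $\Ban$, the functor $\blank \ptp A^{\ptp n}$ (respectively, the functor sending a Banach space $X$ to the space of bounded $n$-linear maps $A^n \to X$) preserves the splitting, so
\[ 0 \to \Ho{C}{n}(A,L) \to \Ho{C}{n}(A,M) \to \Ho{C}{n}(A,N) \to 0 \]
and
\[ 0 \to \Co{C}{n}(A,L) \to \Co{C}{n}(A,M) \to \Co{C}{n}(A,N) \to 0 \]
are split exact in $\Ban$ for every $n$. Naturality of the Hochschild boundary and coboundary in the coefficient bimodule, visible directly from their explicit formulas, upgrades these to short exact sequences of chain and cochain complexes.

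Next I would restrict to the Harrison summand. The BGS projection $\sid_M \ptp \BGS[n]{1}$ on $\Ho{C}{n}(A,M)$, and pre-composition with $\BGS[n]{1}$ on $\Co{C}{n}(A,M)$, act only on the $A^{\ptp n}$ factor, and so commute with any bimodule map applied in the coefficient variable. Since these projections are already known to be compatible with the chain and cochain differentials (Corollary~\ref{c:BGS_arb-coeff}), passing to their images termwise produces short exact sequences of Harrison (co)chain complexes
\[ 0 \to \HarH{C}{*}(A,L) \to \HarH{C}{*}(A,M) \to \HarH{C}{*}(A,N) \to 0, \]
\[ 0 \to \HarC{C}{*}(A,L) \to \HarC{C}{*}(A,M) \to \HarC{C}{*}(A,N) \to 0. \]
The standard zigzag lemma applied to each then yields the two long exact sequences claimed.

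The only point requiring real care -- the ``main obstacle'', such as it is -- is the splitting hypothesis: the projective tensor product and the associated $\Ban$-valued multilinear-map functor are not exact on arbitrary short exact sequences of Banach spaces, which is precisely why one insists that $L \to M \to N$ be split in $\Ban$ in order to obtain exactness of the Hochschild complexes. Once that is in hand, everything else reduces to naturality in the coefficient variable (already implicit in Proposition~\ref{p:CBA_Hoch}) and to the zigzag lemma, so there is no further difficulty.
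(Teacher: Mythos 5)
Your overall strategy coincides with the paper's: tensor the split short exact sequence of coefficients to get a short exact sequence of Hochschild complexes, apply the BGS projection $\BGS[n]{1}$ termwise (valid because it acts only on the $A^{\ptp n}$ factor and is a chain map by Corollary~\ref{c:BGS_arb-coeff}), and run the zigzag lemma on the resulting short exact sequence of Harrison complexes. That much is fine.

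However, you stop one step short of what the lemma actually claims. The zigzag lemma produces a long exact sequence that continues down to degree $0$, i.e.\ it ends (for homology) with
\[
\ldots \to \HarH{H}{1}(A,N) \xrightarrow{\ {\rm conn}\ } \HarH{H}{0}(A,L) \to \HarH{H}{0}(A,M) \to \HarH{H}{0}(A,N) \to 0,
\]
whereas the lemma asserts that the sequence \emph{starts} with $0 \leftarrow \HarH{H}{1}(A,N)$, i.e.\ that $\HarH{H}{1}(A,M) \to \HarH{H}{1}(A,N)$ is surjective. This is not automatic from the zigzag lemma: it is equivalent to the vanishing of the connecting map ${\rm conn}: \HarH{H}{1}(A,N) \to \HarH{H}{0}(A,L)$. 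The paper supplies this missing step by noting that for any symmetric bimodule $X$ one has $\HarH{H}{0}(A,X) = \Ho{H}{0}(A,X) = X$, so that the map $\HarH{H}{0}(A,L) \to \HarH{H}{0}(A,M)$ is the given inclusion $L \hookrightarrow M$, which is injective; exactness then forces ${\rm conn}=0$. (The dual remark, that $\Co{H}{0}(A,M) \to \Co{H}{0}(A,N)$ is the surjection $M\to N$, handles the cohomological endpoint.) Without this observation your argument proves a correct long exact sequence, but not the precise one stated.
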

\begin{proof}
We shall give the proof for Harrison homology and omit that for cohomology since the proof technique is identical.

Since $L\to M \to N$ is split in $\Ban$, so is the induced short exact sequence
\[ L\ptp A^{\ptp n} \to M\ptp A^{\ptp n} \to N \ptp A^{\ptp n} \]
and it remains split if we apply the BGS idempotent  $\sid\ptp \BGS[n]{1}$ to each term in the sequence. But by the definition of Harrison homology the resulting split exact sequence of Banach spaces is just
\[ \HarH{C}{n}(A,L)\to \HarH{C}{n}(A,M)\to \HarH{C}{n}(A,N) \]
Thus we have a short exact sequence of complexes
\[ \HarH{C}{*}(A,L)\to \HarH{C}{*}(A,M)\to \HarH{C}{*}(A,N) \]
and the standard diagram chase allows us to construct from this a long exact sequence of homology.

Furthermore, in the portion of the long exact sequence which goes
\[ 0\leftarrow \HarH{H}{0}(A,N) \leftarrow \HarH{H}{0}(A,M) \lTo^{\iota_0} \HarH{H}{0}(A,L)\lTo^{\rm conn} \HarH{H}{1}(A,N) \leftarrow \ldots \]
we observe that $\HarH{H}{0}(A,X)=\Ho{H}{0}(A,X)=X$ for any symmetric $A$-bimodule $X$. Hence $\iota_0$ is just the inclusion of $L$ into $M$ and is in particular injective; we deduce that the connecting map ${\rm conn}: \HarH{H}{1}(A,N) \to \HarH{H}{0}(A,L)$ is zero, and so our long exact sequence starts
\[   0\lTo^{\rm conn} \HarH{H}{1}(A,N) \leftarrow \HarH{H}{1}(A,M) \leftarrow \ldots \]
as claimed.
\end{proof}
\begin{remstar}
It is clear that similar long exact sequences exist for each summand $\Co{C}{i,*}$ in the Hodge decomposition of cohomology, and for each summand $\Ho{C}{i,*}$ in the Hodge decomposition of homology. We omit the details since they will not be needed in what follows.
\end{remstar}

\subsection*{Harrison (co)homology as a derived functor}
The following computations are motivated by the spectral sequence discussed at the end of Section~\ref{ss:symmetric}.

\begin{propn}\label{p:Harr-as-Ext}
Let $B$ be a commutative Banach algebra such that the chain complex
\begin{equation}\label{eq:Harr-exact}
 \Ho{C}{1}(B,\cu{B})\lTo^{\bdy_1} \HarH{C}{2}(B,\cu{B}) \lTo^{\bdy_2}\HarH{C}{3}(B,\cu{B})\lTo^{\bdy_3}\ldots \end{equation}
is split exact in $\Ban$. Then $\Ho{H}{1}(B,\cu{B})$ is a left Banach $B$-module, which is unit-linked if $B$ is unital.

Moreover, for each $n\geq 1$ and any symmetric Banach $B$-bimodule~$X$\/, we have isomorphisms of seminormed spaces
\[ \begin{aligned}
\HarC{H}{n}(B,X) & \iso \Ext^{n-1}_B\left( \Ho{H}{1}(B,\cu{B}), X_L\right) \\ 
\HarH{H}{n}(B,X) & \iso \Tor_{n-1}^B\left(X_R, \Ho{H}{1}(B,\cu{B})\right)
\end{aligned}\]
where $X_L$ and $X_R$ denote the $B$-modules obtained by restricting the $2$-sided action on $B$ to a left and right action respectively.
\end{propn}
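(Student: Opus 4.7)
The strategy is to exhibit $\Ho{H}{1}(B,\cu{B})$ as the target of a projective resolution in $\LMod{\cu{B}}$ built directly from the Harrison summand of the hypothesis, then derive the $\Ext$ and $\Tor$ identities by applying $\lHom{\cu{B}}(-,X_L)$ and $X_R\ptpR{\cu{B}}(-)$ and invoking the chain isomorphisms~\eqref{eq:UCT_BGSho} and~\eqref{eq:UCT_BGSco}.

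First I observe that, because $B$ is commutative and $\cu{B}$ is a symmetric bimodule, the boundary $\bdy_0\colon\Ho{C}{1}(B,\cu{B})\to\cu{B}$ vanishes identically; hence $\Ho{H}{1}(B,\cu{B})=\Ho{C}{1}(B,\cu{B})/\Image\bdy_1$. A short computation shows that $\bdy_1$ also vanishes on the Lie summand $\Ho{C}{2,0}(B,\cu{B})$, so $\Image\bdy_1=\Image(\bdy_1|_{\HarH{C}{2}(B,\cu{B})})$, and consequently $\Ho{H}{1}(B,\cu{B})$ coincides with $\HarH{H}{1}(B,\cu{B})$. By Proposition~\ref{p:CBA_Hoch} the $\cu{B}$-action on $\Ho{C}{1}(B,\cu{B})$ descends to the quotient; the split-exactness hypothesis forces $\Image\bdy_1$ to be complemented in $\Ho{C}{1}(B,\cu{B})$, so $\Ho{H}{1}(B,\cu{B})$ is a genuine Banach $\cu{B}$-module, unit-linked whenever $B$ is unital.

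Augmenting the complex of the hypothesis by this quotient map gives
\[ 0\leftarrow \Ho{H}{1}(B,\cu{B})\leftarrow \Ho{C}{1}(B,\cu{B})\leftarrow \HarH{C}{2}(B,\cu{B})\leftarrow \HarH{C}{3}(B,\cu{B})\leftarrow\cdots, \]
which is exact (by construction at the augmentation, by hypothesis elsewhere) and split in $\Ban$ (the splitting of the augmentation being provided by the complementation of $\Image\bdy_1$). Each $\HarH{C}{n}(B,\cu{B})=(\sid\ptp\BGS[n]{1})(\cu{B}\ptp B^{\ptp n})$ is the image of a $\cu{B}$-module idempotent on the free $\cu{B}$-module $\cu{B}\ptp B^{\ptp n}$---the idempotent commutes with the $\cu{B}$-action because it acts only on the $B^{\ptp n}$-factor---hence is projective in $\LMod{\cu{B}}$. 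Thus the displayed augmented complex is a projective resolution of $\Ho{H}{1}(B,\cu{B})$ in Helemskii's sense.

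The two isomorphisms now follow routinely. Applying $\lHom{\cu{B}}(-,X_L)$ to the resolution and invoking~\eqref{eq:UCT_BGSco} identifies the resulting cochain complex in degree $k$ with $\HarC{C}{k+1}(B,X)$; taking cohomology gives $\Ext^{n-1}_{\cu{B}}(\Ho{H}{1}(B,\cu{B}),X_L)\iso\HarC{H}{n}(B,X)$. The $\Tor$ statement is entirely symmetric: apply $X_R\ptpR{\cu{B}}(-)$ and invoke~\eqref{eq:UCT_BGSho}. I expect the main obstacle to be verifying that the BGS idempotents genuinely commute with the $\cu{B}$-action---that is, in upgrading each $\HarH{C}{n}(B,\cu{B})$ from a Banach-space direct summand to a $\cu{B}$-module direct summand of $\cu{B}\ptp B^{\ptp n}$---since only then is its projectivity as a $\cu{B}$-module guaranteed. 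Once this is in hand, the remaining verifications are a standard application of the Ext/Tor machinery for Banach modules.
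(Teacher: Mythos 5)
Your proposal is correct and follows essentially the same route as the paper: exhibit the augmented Harrison summand as an admissible $B$-projective resolution of $\Ho{H}{1}(B,\cu{B})$ (projectivity coming from the fact that each $\HarH{C}{n}(B,\cu{B})$ is a module summand of the projective/free module $\cu{B}\ptp B^{\ptp n}$ via the BGS idempotent, which is a $B$-module map since it acts only on the chain factor), then apply $\lHom{}(-,X_L)$ and $X_R\ptpR{}(-)$ and invoke the chain identifications~\eqref{eq:UCT_BGSco},~\eqref{eq:UCT_BGSho}. Your explicit verification that $\bdy_1$ annihilates the Lie summand $\Ho{C}{2,0}(B,\cu{B})$ — so that $\Coker(\bdy_1|_{\HarH{C}{2}})$ really is $\Ho{H}{1}(B,\cu{B})$ — is a small step the paper leaves implicit, and is a sensible thing to spell out.
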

\begin{proof}
Recall from Proposition~\ref{p:CBA_Hoch}
that
\[  \Ho{C}{1}(B,\cu{B})\lTo^{\bdy_1} \HarH{C}{2}(B,\cu{B}) \lTo^{\bdy_2}\HarH{C}{3}(B,\cu{B})\lTo^{\bdy_3}\ldots \]
is a complex in $\LunMod{B}$.

The hypothesis \eqref{eq:Harr-exact} says that there exist bounded linear maps
\[ \sig_n:\HarH{C}{n}(B,\cu{B}) \to \HarH{C}{n+1}(B,\cu{B}) \]
such that
\[ \sig_n\bdy_n+\bdy_{n+1}\sig_{n+1}=\sid \quad\quad\text{ for $n=1,2,\ldots$}\;.\]
In particular $\sig_1\bdy_1=\sid-\bdy_2\sig_2$; hence $\bdy_1\sig_1\bdy_1=\bdy_1(\sid-\bdy_2\sig_2)=\bdy_1$ and this implies that $\bdy_1$ has closed range. Thus $\Ho{H}{1}(B,\cu{B})=\Coker(\bdy_1)$ is the quotient of a Banach $B$-module by a closed submodule, and is therefore itself a Banach $B$-module. If $B$ is unital then $\Ho{H}{1}(B,B)$ is unit-linked, since $\Ho{C}{1}(B,B)$ is.

Each $\Ho{C}{n}(B,\cu{B})$ is $B$-projective as a Banach $B$-module (since $\cu{B}$ is); therefore, since the BGS projections are $B$-module maps, each $\HarH{C}{n}(B,\cu{B})$ is a $B$-module summand of a $B$-projective module and is thus $B$-projective. Hence by the hypothesis \eqref{eq:Harr-exact} the complex
\[ 0\leftarrow \Ho{H}{1}(B,\cu{B})\lTo^q \Ho{C}{1}(B,\cu{B})\lTo^{\bdy_1} \HarH{C}{2}(B,\cu{B}) \lTo^{\bdy_2}\ldots \]
is an admissible $B$-projective resolution of $\Ho{H}{1}(B,\cu{B})$, and by the definitions of $\Tor$ and $\Ext$ we have
\[ \Ext_B^{n-1}\left[ \Ho{H}{1}(B,\cu{B}), X_L \right] \iso H^n\left[ \lHom{B}\left(\HarH{C}{*}(B,\cu{B}), X_L\right)\right] \]
and
\[ \Tor^B_{n-1}\left[ X_R, \Ho{H}{1}(B,\cu{B})\right] \iso H_n\left[ X_R\ptpR{B}\HarH{C}{*}(B,\cu{B})\right] \]
for every $n\geq 1$.
To finish, we recall (see Equation~\eqref{eq:UCT_BGSco}) that the cochain complex $\lHom{B}\left(\HarH{C}{*}(B,\cu{B}), X_L\right)$
is isomorphic to $\HarC{C}{*}(B,X)$, and that the chain complex $X_R\ptpR{B}\HarH{C}{*}(B,\cu{B})$ is isomorphic to $\HarH{C}{*}(B,X)$.
\end{proof}

\end{section} 

\begin{section}{A `baby \Kunneth\ formula'}\label{s:babyKunn}
The \Kunneth\ formula of \cite{GouLyWh} is applied in that article to calculate the simplicial homology groups of $\lp{1}(\Z_+^k)$ up to isomorphism of seminormed spaces; in particular one sees that $\Ho{H}{n}(\lp{1}(\Z_+^k),\lp{1}(\Z_+^k))$ is Banach for all $n$ and all $k$. For later reference, we would like to determine the first simplicial homology group of $\lp{1}(\Z_+^k)$ up to \emph{isomorphism of Banach $\lp{1}(\Z_+^k)$-modules}.

It should be possible, by chasing the relevant maps through the proofs in \cite{GouLyWh}, to show that the Banach-space isomorphism calculated there is in fact an $\lp{1}(\Z_+^k)$-module map. However, we have chosen a more abstract approach: for each unital commutative Banach algebra $A$ we construct a natural seminormed space $\qKahl{A}$ which is also an $A$-module; we show that $\qKahl{A}$ may be identified as a seminormed space and as an $A$-module with $\Ho{H}{1}(A,A)$\/; and we then give a decomposition theorem for $\qKahl{A\ptp B}$ whenever $A$ and $B$ are unital commutative Banach algebras. This approach is slightly more general than that in \cite{GouLyWh} for first homology groups, in that we do not {\it a priori} assume that either $\qKahl{A}$ or $\qKahl{B}$ is Hausdorff.


\subsection{Notation and other preliminaries}
Let $A$ be a unital commutative Banach algebra. Let $I_A$ denote the kernel of the product map $A\ptp A \to A$, equipped with the $A$-bimodule structure it inherits from $A\ptp A$.

We let $\sig_A$ denote the projection from $A\ptp A$ onto $I_A$ which is defined by
\[ \sig_A(x\tp y) = x\tp y - xy\tp \id[A] \]
and note that $\ker(\sig_A)=A\ptp \Cplx\id[A]$. Note also that $\sig_A$ is a left $A$-module map.


Let $\tau_A: A\ptp A \ptp A \to I_A$ be the bounded linear map defined by
\[\begin{aligned}
 \tau_A(x\tp y\tp a) & = \sig_A(x\tp y)\cdot a -a\cdot\sig_A(x\tp y) \\
 & = x\tp ya - xy\tp a - ax\tp y + axy\tp \id[A]
 \end{aligned}\]
Although $\Image(\tau_A)$ need not be closed in $I_A$, it is always a left $A$-submodule of $I_A$\/ (since $A$ is commutative). Hence the quotient space
\[ \qKahl{A}\defeq I_A/\Image(\tau_A) \]
inherits the structure of a left $A$-module.

Being the quotient of a Banach space by a subspace, $\qKahl{A}$ can be equipped with the quotient seminorm, and so we can meaningfully discuss bounded linear maps to and from it (see the remarks at the start of Section~\ref{ss:genprelim}).

The following result is somehow implicit in the setup of \cite{Run_Kahl}, but the precise formulation here is new as far as I know. It is a straightforward if fiddly modification of standard ideas from commutative algebra (see \cite[9.2.4]{Weibel} for instance).
\begin{propn}\label{p:HH1_is_qKahl}
There is an isomorphism of seminormed spaces $\qKahl{A}\to \Ho{H}{1}(A,A)$, which is also an isomorphism of left $A$-modules.
\end{propn}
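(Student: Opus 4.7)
The plan is to exhibit bounded, mutually inverse left $A$-module maps
\[ \alpha\colon \qKahl{A}\to \Ho{H}{1}(A,A), \qquad \beta\colon \Ho{H}{1}(A,A)\to \qKahl{A}. \]
The idea mirrors the algebraic identification of $\Ho[\alg]{H}{1}(\sA,\sA)$ with \Kahler\ differentials $I/I^2$ for a commutative algebra $\sA$: one expects $\Image(\tau_A)$ to play the role of $I_A^2$, adjusted so as to behave well after completion. Since $A$ is commutative, $\bdy_0$ vanishes on $\Ho{C}{1}(A,A)$, so $\Ho{H}{1}(A,A) = (A\ptp A)/\Image(\bdy_1)$ with the quotient seminorm and the left $A$-module structure supplied by Proposition~\ref{p:CBA_Hoch}.

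I would define $\alpha$ by composing the inclusion $\iota\colon I_A\hookrightarrow A\ptp A$ with the quotient onto $\Ho{H}{1}(A,A)$. To see it descends to $\qKahl{A}$, I would verify by direct expansion the identity
\[ \tau_A(x\tp y\tp a) \;=\; -\bdy_1(x\tp y\tp a) + \bdy_1(axy\tp\id[A]\tp\id[A]), \]
which puts $\Image(\tau_A)$ inside $\Image(\bdy_1)$. Dually, I would define $\beta$ by composing $\sig_A\colon A\ptp A\to I_A$ with the quotient $I_A\to\qKahl{A}$; the fact that this descends from $A\ptp A$ to $\Ho{H}{1}(A,A)$ amounts to the companion identity $\sig_A\circ \bdy_1 = -\tau_A$, again a direct expansion.

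To see $\beta\alpha=\sid$, note that for $v\in I_A$ we have $\sig_A(v) = v$, since $v$ lies in the kernel of the product map. To see $\alpha\beta=\sid$, observe that on a class $[x\tp y]$ the composite gives $[x\tp y - xy\tp\id[A]]$, and $xy\tp\id[A]=\bdy_1(xy\tp\id[A]\tp\id[A])$ lies in $\Image(\bdy_1)$. Both maps are bounded, being induced by the bounded linear maps $\iota$ and $\sig_A$, and boundedness passes to the seminormed quotient in the obvious way since one is free to choose any representative. Both preserve the left $A$-action because $\iota$ trivially does and $\sig_A$ is a left $A$-module map by construction.

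The main delicacy here is conceptual rather than computational. Because neither $\qKahl{A}$ nor $\Ho{H}{1}(A,A)$ need be Hausdorff, the open mapping theorem cannot be used to upgrade a continuous bijection to an isomorphism of seminormed spaces---as the remark after Definition~\ref{dfn:Sns_iso} warns---so the explicit two-sided inverse $\beta$ must be exhibited by hand rather than invoked abstractly. Beyond that, the work is just careful bookkeeping with the three-term Hochschild boundary and the four-term formula defining $\tau_A$.
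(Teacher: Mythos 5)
Your proof is correct and is essentially identical to the paper's: your $\alpha$ and $\beta$ are the maps $\wtild{J}$ and $\wtild{\sig_A}$ in the paper's argument, built from the same two identities $\sig_A\bdy_1^A=-\tau_A$ and $J\tau_A(x\tp y\tp a)=-\bdy_1^A(x\tp y\tp a)+\bdy_1^A(axy\tp\id[A]\tp\id[A])$, with the same final check that $\sid-J\sig_A$ lands in $\Image(\bdy_1^A)$. The only difference is expository: you introduce the $\qKahl{A}\to\Ho{H}{1}(A,A)$ direction first, whereas the paper begins with $\wtild{\sig_A}$.
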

Note that in particular $\qKahl{A}$ is a Banach space if and only if $\Ho{H}{1}(A,A)$ is.

\begin{proof}
Let $\bdy_1^A: \Ho{C}{2}(A,A) \to \Ho{C}{1}(A,A)$ be the Hochschild boundary map, given by the formula
\[ \bdy_1^A(x\tp a_1\tp a_2) = xa_1\tp a_2 - x\tp a_1a_2 + a_2x\tp a_1 \;.\]
Direct calculation yields the useful formula
\begin{equation}\label{eq:BRUNO}
\sig_A\bdy_1^A= - \tau_A\/.
\end{equation}
In particular the composite map
\[ \Ho{C}{1}(A,A) =A\ptp A \rTo^{\sig_A} I_A \rTo I_A/\Image(\tau_A) \]
vanishes on $\Image(\bdy_1^A)=\Ho{B}{1}(A,A)$, hence descends to a well-defined and bounded linear $A$-module map $\wtild{\sig_A}$ as shown in Figure~\ref{fig:descent} below.
\begin{figure}
\begin{diagram}
\Ho{C}{1}(A,A)  & \rTo^{\sig_A} & I_A \\
\dTo^{q} & & \dTo \\
\Ho{H}{1}(A,A) & \rDots_{\wtild{\sig_A}} & \qKahl{A} \\
\end{diagram}
\caption{Inducing a map between quotient spaces}
\label{fig:descent}
\end{figure}
It now suffices to construct a bounded linear 2-sided inverse to $\wtild{\sig_A}$, which we do as follows. Let $J: I_A \to A\ptp A =\Ho{C}{1}(A,A)$ be the inclusion map: then $\sig_AJ=\sid$. Moreover, for any $x,y,a \in A$
\[ \begin{aligned} J\tau_A(x\tp y\tp a) 
& = x\tp ay - xy\tp a - ax\tp y + axy\tp \id[A] \\
& = - \bdy_1^A(x\tp y\tp a) + \bdy_1^A(axy\tp\id[A]\tp\id[A]) \\
\end{aligned} \]
and so $qJ$ vanishes on $\Image(\tau)$, inducing a bounded linear map $\wtild{J}: \qKahl{A} \to \Ho{H}{1}(A,A)$. Since $\sig_A J=\sid$, $\wtild{\sig_A}\wtild{J}$ is the identity map, and it remains only to show that ${ \sid-J\sig_A }$ takes values in $\ker(q)=\Image(\bdy_1)$. But this is immediate, since
\[ (\sid-J\sig_A) (x\tp y) = xy\tp \id[A] = \bdy_1^A(xy\tp\id[A]\tp\id[A]) \]
for all $x,y\in A$\/.
\end{proof}

\subsection*{The main formula}
Let $A$ and $B$ be unital commutative Banach algebras; then their projective tensor product $A\ptp B$ is also a unital commutative Banach algebra, which we denote by $C$.

\begin{thm}[Differentials of tensor products]\label{t:qKahl_of_tp}
There exist mutually inverse, bounded linear $C$-module maps
\begin{equation}\label{eq:qKahl_of_tp}
\qKahl{C} \RL{\Ex}{\Ass} \frac{I_A\ptp B}{\Image(\tau_A\ptp\sid_B)} \oplus\frac{A\ptp I_B}{\Image(\sid_A\ptp\tau_B)}
\end{equation}
\end{thm}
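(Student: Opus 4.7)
The plan is to build both $\Ass$ and $\Ex$ from natural ``spreading'' and ``splitting'' maps at the level of the ambient projective tensor products, and then show they descend to the advertised quotients and are mutually inverse. Boundedness is automatic throughout, since everything is assembled from $\ptp$-tensor products and quotient maps.

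For $\Ass$, I would first define a bounded linear map
\[ \phi_A\colon I_A\ptp B \to I_C, \qquad \phi_A\bigl((x\tp y)\tp b\bigr) = (x\tp \id[B])\tp (y\tp b), \]
which does land in $I_C$ because $(x\tp \id[B])(y\tp b) = xy\tp b$, so any sum with $\sum_i x_i y_i = 0$ maps into $I_C$. Symmetrically I would define $\phi_B\colon A\ptp I_B \to I_C$ by $\phi_B\bigl(a\tp(x'\tp y')\bigr) = (a\tp x')\tp (\id[A]\tp y')$. A direct calculation yields
\[ \phi_A\bigl(\tau_A(x\tp y\tp a)\tp b\bigr) = \tau_C\bigl((x\tp \id[B])\tp(y\tp b)\tp(a\tp \id[B])\bigr) - \tau_C\bigl((xy\tp \id[B])\tp(\id[A]\tp b)\tp(a\tp \id[B])\bigr), \]
so $\phi_A$ carries $\Image(\tau_A\ptp\sid_B)$ into $\Image(\tau_C)$ and hence descends to a bounded $C$-module map $\Ass_A\colon I_A\ptp B/\Image(\tau_A\ptp\sid_B)\to\qKahl{C}$. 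A parallel computation gives $\Ass_B$, and I set $\Ass\defeq \Ass_A+\Ass_B$.

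For $\Ex$, I would define the bounded linear map
\[ \psi\colon C\ptp C \to (I_A\ptp B)\oplus(A\ptp I_B), \quad \psi\bigl((a\tp b)\tp(a'\tp b')\bigr) \defeq \bigl(\sig_A(a\tp a')\tp bb'\bigr)\oplus\bigl(aa'\tp \sig_B(b\tp b')\bigr). \]
Expanding $\psi\circ\tau_C$ on pure tensors $u_j = a_j\tp b_j$ (for $j=1,2,3$), the four terms of $\tau_C(u_1\tp u_2\tp u_3)$ combine, with massive cancellation in the ``cross'' products $a_ib_j$, to give
\[ \psi\bigl(\tau_C(u_1\tp u_2\tp u_3)\bigr) = \bigl(\tau_A(a_1\tp a_2\tp a_3)\tp b_1 b_2 b_3\bigr)\oplus\bigl(a_1 a_2 a_3\tp \tau_B(b_1\tp b_2\tp b_3)\bigr), \]
which lies in $\Image(\tau_A\ptp\sid_B)\oplus\Image(\sid_A\ptp\tau_B)$. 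Thus $\psi|_{I_C}$ descends to a bounded linear map $\Ex\colon\qKahl{C}\to K_A\oplus K_B$, where $K_A$ and $K_B$ denote the two quotient summands.

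Finally I would verify that $\Ass$ and $\Ex$ are mutually inverse $C$-module maps. The identity $\Ex\circ\Ass=\sid$ is essentially painless: evaluating on a class $[(x\tp y)\tp b]\in K_A$ with representative $\sum_i x_i\tp y_i\in I_A$ (so $\sum_i x_iy_i=0$), the $A\ptp I_B$-component of $\psi\phi_A$ carries a factor of $\sum_i x_iy_i$ and therefore vanishes, while the $I_A\ptp B$-component collapses to $(x\tp y)\tp b$ by the same cancellation applied to $\sig_A$. The identity $\Ass\circ\Ex=\sid$ is the main computational step: on a generator $[\sig_C((a\tp b)\tp(a'\tp b'))]$ the difference $\Ass\Ex - \sid$ comes out, after expansion, to be a specific $\tau_C$-boundary (one can witness it as $\tau_C((a\tp \id[B])\tp(a'\tp b')\tp(\id[A]\tp b)) - \tau_C((aa'\tp \id[B])\tp(\id[A]\tp b)\tp(\id[A]\tp b'))$), and hence is zero in $\qKahl{C}$. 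The $C$-module compatibility requires a similar piece of bookkeeping, since $\phi_A$ is not strictly a $C$-module map at the level of $I_C$; the failure $\phi_A((a_0\tp b_0)\cdot\alpha) - (a_0\tp b_0)\cdot\phi_A(\alpha)$ is again a $\tau_C$-boundary plus a correction containing the factor $\sum x_iy_i$, which vanishes because $\alpha\in I_A$. The main obstacle throughout is therefore not conceptual but organisational: identifying the correct $\tau_C$-boundaries that make the four compatibility statements work, while keeping the various commutators that can be absorbed carefully tracked.
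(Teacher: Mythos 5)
Your plan is correct and follows the same overall strategy as the paper: build explicit bounded linear lifts at the level of $I_C$ and $I_A\widehat{\otimes} B \oplus A\widehat{\otimes} I_B$, show that each lift carries the relevant $\tau$-image into the other $\tau$-image (so that they descend to the quotients), and then verify mutual inverseness on generators modulo $\tau_C$-boundaries. Your map $\psi$, restricted to $I_C$, is exactly the paper's $\preEx$ (one checks $\psi\circ\sig_C = \psi|_{C\widehat{\otimes} C}$ since $\psi$ kills $C\widehat{\otimes}\Cplx\id[C]$), and your identity $\psi\tau_C = (\tau_A\widehat{\otimes}\sid_B,\,\sid_A\widehat{\otimes}\tau_B)\circ\theta$ is the paper's Claim~\#1.

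The one genuine point of divergence is your choice of $\phi_A$. The paper instead uses $\preAss_A(\sig_A(u\tp x)\tp b) = \sig_C(u\tp b\tp x\tp\id[B])$, i.e.\ it parametrises $I_A\widehat{\otimes} B$ via $\sig_A$ and post-composes with $\sig_C$. The benefit of that normalisation is that $\preAss_A$ is a $C$-module map on the nose (since $\sig_A$ and $\sig_C$ are left-module maps and intertwine the $C$-action), which eliminates the extra layer of bookkeeping you correctly flag at the end of your proposal: your $\phi_A$ satisfies $\phi_A(c\cdot\alpha) - c\cdot\phi_A(\alpha) = \tau_C(\text{something}) + (\text{a term factoring through }\mu_A\widehat{\otimes}\sid_B)$, and you must argue that the second term dies on $I_A\widehat{\otimes} B$. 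That argument does go through, but it is the kind of ``organisational obstacle'' you mention, and it is precisely what the paper's $\sig$-twisted formula is engineered to avoid. Everything else — the $\tau_C$-witness you name for $\Ass\Ex - \sid$, the degeneration of $\psi\phi_A$ to the identity on $I_A\widehat{\otimes} B$ because $\sig_A$ restricts to the identity on $I_A$ — is correct and matches the paper's Step~3.
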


\begin{coroll}\label{c:BARRY}
Suppose furthermore that the underlying Banach spaces of $A$ and $B$ are isomorphic to $\Lp{1}$-spaces, and that both $\tau_A$ and $\tau_B$ have closed range.
Then $\tau_C$ has closed range and we have an isomorphism of Banach $C$-modules
\[ \qKahl{C}\iso \qKahl{A}\ptp B \oplus A\ptp\qKahl{B} \;. \]
\end{coroll}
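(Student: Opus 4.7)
The plan is to invoke Theorem~\ref{t:qKahl_of_tp} and then identify each of the two summands on its right-hand side with the ``correct'' projective tensor product, using the fact that tensoring with an $\Lp{1}$-space preserves short exact sequences of Banach spaces. Once this identification is in place, $\qKahl{C}$ becomes isomorphic to a direct sum of two Banach spaces and is therefore itself a Banach space; but $\qKahl{C}=I_C/\Image(\tau_C)$, so this forces $\Image(\tau_C)$ to be closed, i.e.\ $\tau_C$ to have closed range as claimed.

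The key technical step is to verify
\[ \frac{I_A\ptp B}{\Image(\tau_A\ptp \sid_B)} \iso \qKahl{A}\ptp B \]
as Banach $C$-modules. The hypothesis that $\tau_A$ has closed range guarantees that $\qKahl{A}$ is a Banach $A$-module and that
\[ 0 \to \Image(\tau_A) \to I_A \to \qKahl{A} \to 0 \]
is a short exact sequence in $\Ban$. The $\Lp{1}$-hypothesis on $B$ permits us to identify $\blank\ptp B$ with the Bochner functor $\Lp{1}(\mu;\blank)$, which is exact on short exact sequences of Banach spaces; applying it to the sequence above yields a short exact sequence
\[ 0 \to \Image(\tau_A)\ptp B \to I_A\ptp B \to \qKahl{A}\ptp B \to 0. \]
Right exactness of the projective tensor product, applied to the surjection $\tau_A : A^{\ptp 3} \to \Image(\tau_A)$, then tells us that $\Image(\tau_A\ptp\sid_B)$ coincides with $\Image(\tau_A)\ptp B$ inside $I_A\ptp B$, yielding the required isomorphism of Banach spaces.

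That this isomorphism intertwines the $C$-actions is immediate, since everything arises from tensoring the continuous $A$-module quotient map $I_A \to \qKahl{A}$ with $\sid_B$. The argument for the other summand is entirely symmetric in $A$ and $B$, so combining both identifications with Theorem~\ref{t:qKahl_of_tp} delivers the stated decomposition of $\qKahl{C}$ as a Banach $C$-module, and the closed-range assertion follows as described in the first paragraph. The main obstacle is ensuring the $\Lp{1}$-flatness step is rigorous: for a general Banach space $B$ the inclusion $\Image(\tau_A)\ptp B \to I_A\ptp B$ need not be a topological embedding, so the $\Lp{1}$-hypothesis on $A$ and $B$ is genuinely used and is not a mere convenience.
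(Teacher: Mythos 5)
Your argument is correct and follows essentially the same route as the paper: invoke Theorem~\ref{t:qKahl_of_tp}, then use the $\Lp{1}$-hypothesis to show that tensoring with $A$ (respectively $B$) preserves the short exact sequence $0\to\Image(\tau_B)\to I_B\to\qKahl{B}\to 0$ (respectively the one for $A$), thereby rewriting the two quotient summands as $A\ptp\qKahl{B}$ and $\qKahl{A}\ptp B$, with closedness of $\Image(\tau_C)$ falling out at the end. The only difference is that you make explicit the intermediate step $\Image(\tau_A\ptp\sid_B)=\Image(\tau_A)\ptp B$ via surjectivity of $\tau_A$ onto its range and right-exactness of $\ptp$, a point the paper leaves implicit.
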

\begin{proof}[Proof of Corollary~\ref{c:BARRY}]
Write $V_A$ and $V_B$ for $\Image\tau_A$ and $\Image\tau_B$ respectively: since these are both closed subspaces, $\qKahl{A}= I_A/V_A$ and $\qKahl{B}=I_B/V_B$ are both Banach spaces.

Since $A$ is an $\Lp{1}$-space, standard Banach space theory tells us that the functor $A\ptp(\blank)$ sends short exact sequences of Banach spaces to short exact sequences.
(See, for instance, Propos\-ition~3.10 and the remark just after it in \cite{DefFlor}.)
 In particular, $A\ptp V_B$ is a closed subspace of $A\ptp I_B$ and
\[ A\ptp \qKahl{B} = A\ptp (I_B / V_B ) \iso \frac{A\ptp I_B}{\Image(\sid_A\ptp\tau_B)}\]
 By symmetry we also have $B\ptp\qKahl{A}\iso (I_A\ptp B)/\Image(\tau_A\ptp\sid_B)$, and so Equation \eqref{eq:qKahl_of_tp} simplifies to give
\[\qKahl{C}\iso \qKahl{A}\ptp B \oplus A\ptp\qKahl{B}  \]
(this shows in passing that $\tau_C$ has closed range).
\end{proof}

We may extend Corollary~\ref{c:BARRY} to $k$-fold tensor products of unital commutative Banach algebras, by the obvious induction on $k$\/.
\begin{coroll}\label{c:baby_Kunn}
Let $A_1, \ldots, A_k$ be unital, commutative Banach algebras, each of whose underlying Banach spaces is an $\Lp{1}$-space, and let $\fA = \widehat{\bigotimes}_{i=1}^k A_i$. Suppose that $\Ho{H}{1}(A_i,A_i)$ is a Banach space for each $i$. Then $\Ho{H}{1}(\fA,\fA)$ is Banach, and there is an isomorphism of Banach $\fA$-modules
\[ \Ho{H}{1}(\fA,\fA) \iso \bigoplus_{i=1}^k A_1\ptp \ldots \ptp \Ho{H}{1}(A_i,A_i)\ptp\ldots \ptp A_k \]
\end{coroll}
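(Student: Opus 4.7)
The plan is induction on $k$; the base case $k=1$ is tautological, the indexed direct sum degenerating to the single summand $\Ho{H}{1}(A_1,A_1)$, which is Banach by hypothesis. Now fix $k \geq 2$ and assume the result for $k-1$.

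Set $A \defeq A_1 \ptp \cdots \ptp A_{k-1}$ and $B \defeq A_k$, so that $\fA = A \ptp B$; I plan to apply Corollary~\ref{c:BARRY} to this pair. Three points must be verified: $A$ and $B$ should both be Banach-isomorphic to $\Lp{1}$-spaces, and both $\tau_A$ and $\tau_B$ should have closed range. For $B = A_k$ these are immediate: the $\Lp{1}$ property is explicit, and closed range of $\tau_B$ follows from the hypothesis that $\Ho{H}{1}(A_k,A_k)$ is Banach combined with the identification $\qKahl{B} \iso \Ho{H}{1}(B,B)$ of Proposition~\ref{p:HH1_is_qKahl} (from which we read off that $I_B/\Image(\tau_B)$ is Banach, equivalently that $\Image(\tau_B)$ is closed). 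For $A$, the $\Lp{1}$ property propagates along projective tensor products via the standard identification $\Lp{1}(\mu) \ptp \Lp{1}(\nu) \miso \Lp{1}(\mu \times \nu)$, applied inductively to the factors $A_1,\ldots,A_{k-1}$; and the inductive hypothesis that $\Ho{H}{1}(A,A)$ is Banach gives closed range for $\tau_A$ by the same argument as for $B$.

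With the hypotheses secured, Corollary~\ref{c:BARRY} produces an isomorphism of Banach $\fA$-modules
\[ \qKahl{\fA} \iso \qKahl{A} \ptp B \;\oplus\; A \ptp \qKahl{B}, \]
certifying in particular that $\qKahl{\fA}$, and hence $\Ho{H}{1}(\fA,\fA)$, is a Banach space. Now apply the inductive hypothesis to expand
\[ \qKahl{A} \iso \Ho{H}{1}(A,A) \iso \bigoplus_{i=1}^{k-1} A_1 \ptp \cdots \ptp \Ho{H}{1}(A_i,A_i) \ptp \cdots \ptp A_{k-1} \]
as Banach $A$-modules. Since $\ptp$ distributes over finite $\oplus$ on either side, tensoring with $B = A_k$ on the right produces the summands $i = 1, \ldots, k-1$ of the target decomposition; the remaining summand $A \ptp \qKahl{B} \iso A_1 \ptp \cdots \ptp A_{k-1} \ptp \Ho{H}{1}(A_k,A_k)$ supplies the $i = k$ term, and the induction closes.

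The only real obstacle is bookkeeping: one must check that at each step the isomorphisms invoked respect the \emph{full} $\fA$-module structure, and not merely the restriction to one of the tensor factors. This is however straightforward, since the maps $\Ex$ and $\Ass$ of Theorem~\ref{t:qKahl_of_tp}, the comparison map of Proposition~\ref{p:HH1_is_qKahl}, and the canonical distributivity isomorphism relating $\ptp$ and $\oplus$ are all given by explicit formulas that are manifestly natural in both variables.
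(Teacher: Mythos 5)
Your proof is correct and is exactly the ``obvious induction on $k$'' that the paper invokes without writing out: apply Corollary~\ref{c:BARRY} to the pair $(A_1\ptp\cdots\ptp A_{k-1},\,A_k)$, verifying the $\Lp{1}$ and closed-range hypotheses via the stability of $\Lp{1}$-spaces under $\ptp$ and the identification $\qKahl{\blank}\iso\Ho{H}{1}(\blank,\blank)$ of Proposition~\ref{p:HH1_is_qKahl}, then expand the $\qKahl{A}$ factor by the inductive hypothesis. No gaps; the module-structure bookkeeping you flag at the end is indeed routine for exactly the reason you give.
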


\subsection*{The proof of Theorem~\ref{t:qKahl_of_tp}}
What follows is simple but involves rather tedious manipulations. We shall construct suitable Banach $C$-module maps
\[ \preEx: I_C \to I_A\ptp B \oplus A\ptp I_B\quad\text{ and }\quad  \preAss:  I_A\ptp B \oplus A\ptp I_B \to I_C \]
which descend to mutually inverse maps at the level of quotient spaces. We proceed in three steps

\subsubsection*{Step 1: the definition of $\preEx$}
Let
\begin{equation}\label{eq:dfn_preEx}
 \preEx\left(\sig_C(a\tp b\tp x\tp y)\right) \defeq  \cvct{\sig_A(a \tp x)\tp by}{ax\tp\sig_B(b\tp y)}
 \quad\quad(a,x \in A; b,y \in B)
 \end{equation}
(this is well-defined, since $\ker(\sig_C)=C\ptp \Cplx\id[C] = A\ptp B\ptp\Cplx(\id[A]\tp\id[B])$ and the right-hand side of equation \eqref{eq:dfn_preEx} vanishes if $x\in\Cplx\id[A]$ and $y\in\Cplx\id[B]$). One easily checks that $\preEx$ is a Banach $C$-module map.

We must show that
there is a well-defined, bounded linear $C$-module map $\Ex$ which makes the following diagram commute:
\[ \begin{diagram}
I_C  & & \rTo^{\preEx} & & I_A\ptp B \oplus A\ptp I_B \\
\dTo &  & & & \dTo \\
\Coker(\tau_C) & & \rDots_{\Ex} & & \Coker(\tau_A\ptp\sid_B)\oplus\Coker(\sid_A\ptp\tau_B) \\
\end{diagram} \]
By standard diagram chasing, it suffices to show that 
\[ \Image(\preEx \circ\tau_C) \subseteq\Image((\tau_A\ptp\sid_B, \sid_A\ptp \tau_B)) \quad;\]
this inclusion in turn follows from the following claim:

\smallskip
\noindent{\bf Claim \#1.} There exists a bounded linear map $\tht$ making the following diagram commute:
\[ \begin{diagram}
C^{\ptp 3} & & \rTo^{\tau_C} & & I_C \\
 \dDots^{\tht} & &   & & \dTo_{\preEx} \\
A^{\ptp 3}\ptp B\oplus A\ptp B^{\ptp 3} & & \rTo_{(\tau_A\ptp\sid_B, \sid_A\ptp \tau_B)} & & I_A\ptp B\oplus A\ptp I_B \\
\end{diagram} \]
(For if we assume the claim holds, then
\[ \Image(\preEx \circ\tau_C) =\Image((\tau_A\ptp\sid_B, \sid_A\ptp \tau_B)\circ\tht) \subseteq\Image((\tau_A\ptp\sid_B, \sid_A\ptp \tau_B)) \]
as required.)

\begin{proof}[Proof of Claim \#1]
Let $x_1,x_2,a \in A$ and $y_1,y_2,b \in B$. Since $\tau=-\sig\bdy_1$ (see \eqref{eq:BRUNO} above) we have
\[ \begin{aligned}
 & \preEx\tau_C(x_1\tp y_1\tp x_2\tp y_2\tp a\tp b) \\
 & = - \preEx\sig_C\bdy_1^C(x_1\tp y_1\tp x_2\tp y_2\tp a\tp b) \\
 & = - \preEx\sig_C(x_1x_2\tp y_1y_2\tp a\tp b - x_1\tp y_1\tp x_2a\tp y_2b
	+ ax_1\tp by_1 \tp x_2\tp y_2) \\
 & =
	- \cvct{\sig_A(x_1x_2\tp a)\tp y_1y_2b}{x_1x_2a\tp \sig_B(y_1y_2\tp b}
	+ \cvct{\sig_A(x_1\tp x_2a)\tp y_1y_2b}{x_1x_2a\tp\sig_B(y_1\tp y_2\tp b)}
	- \cvct{\sig_A(ax_1\tp x_2)\tp by_1y_2}{ax_1x_2\tp\sig_b(by_1\tp y_2)}
 \\ 
 & = - \cvct{\sig_A\bdy_1^A(x_1\tp x_2\tp a) \tp y_1y_2b}{x_1x_2a\tp\sig_B\bdy_1^B(y_1\tp y_2\tp b)} \\
 & = \cvct{\tau_A(x_1\tp x_2\tp a)\tp by_1y_2 }{ax_1x_2 \tp \tau_B(y_1\tp y_2\tp b)} \;.\\
\end{aligned} \]
We therefore define $\tht$ by the formula
\[ \tht(x_1\tp y_1\tp x_2\tp y_2\tp a\tp b) \defeq \cvct{(x_1\tp x_2\tp a)\tp by_1y_2 }{ax_1x_2\tp(y_1\tp y_2\tp b)} \]
and observe that $\tht$ is bounded linear; by linearity and continuity the calculation above implies that $\preEx\tau_C = \tht(\tau_A\ptp\sid_B, \sid_A\ptp\tau_B)$ as claimed.
\end{proof}

\subsubsection*{Step 2: the definition of $\preAss$}
It is convenient to introduce auxiliary maps $\preAss_A:I_A\ptp B \to I_C$ and $\preAss_B:A\ptp I_B \to I_C$\/, defined by
\[ \begin{aligned}
\preAss_A(\sig_A(u\tp x)\tp b) & = \sig_C(u\tp b\tp x\tp\id[B]) &\quad\parbox{16em}{(well-defined, since the right-hand side vanishes if $x\in\Cplx\id[A]$)} \\
\preAss_B(a\tp\sig_B(v\tp y))  & = \sig_C(a\tp v\tp\id[A]\tp y) &\quad\parbox{16em}{(well-defined, since the right-hand side vanishes if $y\in\Cplx\id[B]$)} \\
 \end{aligned} \]
One checks easily that $\preAss_A$ and $\preAss_B$ are Banach $C$-module maps. Hence their direct sum
\[ \preAss\defeq \Ass_A\oplus\Ass_B : \cvct{I_A\ptp B}{A\ptp I_B} \to I_C\]
 is also a Banach $C$-module map.

We must show that
there is a well-defined, bounded linear $C$-module map $\Ass$ which makes the following diagram commute:
\[ \begin{diagram}
I_A\ptp B \oplus A\ptp I_B   & & \rTo^{\preAss} & & I_C \\
\dTo & &  & & \dTo \\
\Coker(\tau_A\ptp\sid_B)\oplus\Coker(\sid_A\ptp\tau_B) & & \rDots_{\Ass} & &  \Coker(\tau_C)\\
\end{diagram} \]
By standard diagram chasing, it suffices to show that 
\[ \Image(\preAss\circ(\tau_A\ptp\sid_B, \sid_A\ptp \tau_B))\subseteq\Image(\tau_C)  \quad;\]
this inclusion in turn follows from the following claim:

\smallskip
\noindent{\bf Claim \#2.} There exists a bounded linear map $\gm$ making the following diagram commute:
\[ \begin{diagram}
A^{\ptp 3}\ptp B\oplus A\ptp B^{\ptp 3} & & \rTo^{(\tau_A\ptp\sid_B, \sid_A\ptp \tau_B)} & & I_A\ptp B\oplus A\ptp I_B \\
 \dDots^{\gm} & &   & & \dTo_{\preEx} \\
C^{\ptp 3} & & \rTo_{\tau_C} & & I_C \\
\end{diagram} \]
(For if we assume the claim holds, then
\[ \Image(\preAss\circ(\tau_A\ptp\sid_B, \sid_A\ptp \tau_B)) = \Image(\tau_C\circ\gm) \subseteq \Image(\tau_C) \]
as required.)

\begin{proof}[Proof of Claim \#2]
Let $x_1,x_2,u \in A$ and $b \in B$. Since $\sig\bdy_1=-\tau$\/, we have
\[ \begin{aligned}
 & \preAss_A(\tau_A\ptp\sid_B)(x_1\tp x_2\tp u\tp b) \\
 & = - \preAss_A\left( \sig_A\bdy_1^A(x_1\tp x_2\tp a)\tp b\right) \\
 & = - \preAss_A\left(
	\sig_A(x_1x_2\tp u - x_1\tp x_2u + ux_1\tp x_2)\tp b
	\right) \\
 & = - \sig_C( x_1x_2\tp b\tp u\tp\id[B] - x_1\tp b\tp x_2u\tp\id[B] + ux\tp b\tp x_2\tp\id[B]  ) \\
 & = -\sig_C\bdy_1^C ( x_1\tp b\tp x_2\tp\id[B]\tp u\tp\id[B]) \\
 & = \tau_C(x_1\tp b\tp x_2\tp \id[B]\tp u\tp \id[B])  \quad;\\
\end{aligned}\]
and by symmetry, if $a\in A$ and $y_1,y_2,v \in B$, we have
\[ \preAss_B(\sid_A\ptp\tau_B)(a\tp y_1\tp y_2\tp v) = \tau_C(a\tp y_1\tp\id[A]\tp y_2\tp\id[A]\tp v) \;.\]
We therefore define $\gm$ by the formula
\[ \gm\cvct{(x_1\tp x_2\tp u)\tp b}{a\tp (y_1\tp y_2\tp v)} \defeq x_1\tp b\tp x_2\tp \id[B]\tp u\tp \id[B] + a\tp y_1\tp\id[A]\tp y_2\tp\id[A]\tp v\]
and observe that $\gm$ is bounded linear; by linearity and continuity the calculations above imply that
\[ \preAss_A\left( \tau_A\ptp\sid_B, \sid_A\ptp\tau_B\right) = \tau_C\gm \]
as claimed.

\end{proof}

\subsubsection*{Step 3: proving that $\Ass$ and $\Ex$ are mutually inverse}
Consider the map
\[ \preEx\preAss : I_A\ptp B \oplus A\ptp I_B \to I_A\ptp B \oplus A\ptp I_B \;.\]
Evaluating on elementary tensors, we find that
\[ \begin{aligned}
 \preEx\preAss \cvct{\sig_A(u\tp x)\tp b }{ a\tp\sig_B(v\tp y)}
 &  = \preEx\sig_C(u\tp b\tp x\tp\id[B]) + \preEx\sig_C(a\tp v\tp\id[A]\tp y) \\
 & = \cvct{\sig_A(u\tp x)\tp b}{0} + \cvct{0}{a\tp\sig_B(v\tp y)}  =  \cvct{\sig_A(u\tp x)\tp b }{ a\tp\sig_B(v\tp y)} \\
\end{aligned} \]
and so by continuity and linearity, $\preEx\preAss$ is the identity map on $I_A\ptp B \oplus A\ptp I_B$; in particular, $\Ex\Ass$ is the identity map on $\Coker(\tau_A\ptp\sid_B)\oplus\Coker(\sid_A\ptp\tau_B)$.

It remains only to show that the map $\preAss\preEx- \sid$ takes values in $\Image(\tau_C)$. Since $\sig_C$ surjects onto the domain of $\preEx$, it suffices to construct a bounded linear map $\rho: C\ptp C \to C\ptp C\ptp C$ such that
\begin{equation}\label{eq:Daffy}
\preAss\preEx\sig_C - \sig_C = \tau_C\rho
\end{equation}
which we do as follows. For any $a,x \in A$ and $b,y \in B$,
\[ \begin{aligned}
& (\preAss\preEx\sig_C-\sig_C) (a\tp b \tp x \tp y))  \\
 & = \preAss_A(\sig_A(a\tp x)\tp by) + \preAss_B(ax\tp\sig_B(b\tp y)) - \sig_C(a\tp b \tp x \tp y) \\
& = \sig_C (a\tp by\tp x\tp\id[B] + ax\tp b\tp\id[A]\tp y - a\tp b\tp x \tp y ) \\
& = \sig_C \bdy^C_1( (a\tp b) \tp (x\tp\id[B]) \tp (\id[A]\tp y)) \\
& = - \tau_C(a\tp b\tp x\tp\id[B]\tp \id[A]\tp y) \,.
\end{aligned} \]
We therefore define $\rho$ by the formula $\rho(a\tp b \tp x\tp y) = - a\tp b\tp x\tp\id[B]\tp\id[A]\tp y$\/. It is clear that $\rho$ is bounded linear, and by linearity and continuity we conclude that \eqref{eq:Daffy} holds. This completes Step~3.

\medskip
Theorem~\ref{t:qKahl_of_tp} now follows by combining Steps 1, 2 and 3.

\subsection*{Relation to the `Banach \Kahler\ module'}
This short section is not needed for the results to follow, but puts the seminormed module $\qKahl{A}$ into context.

Let $I_A^{[2]}$ denote the image of the product map $I_A\ptp I_A \to I_A$; note that this is {\it a priori}\/ strictly larger than $I_A^2=\lin\{ vw \st v,w \in I_A\}$, and is in general strictly smaller than~$\clos{I_A^2}$.

\begin{lemma}
$I_A^{[2]} = \Image(\tau_A)$.
\end{lemma}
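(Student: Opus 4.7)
The plan is to prove both inclusions $\Image(\tau_A) \subseteq I_A^{[2]}$ and $I_A^{[2]} \subseteq \Image(\tau_A)$ by finding two algebraic identities on elementary tensors which tie $\tau_A$ directly to products of elements of $I_A$ in $A \ptp A$. All maps involved are bounded, so passing from elementary tensors to general elements of $A^{\ptp 3}$, or of $I_A \ptp I_A$, is simply a matter of linearity and continuity in the projective tensor norm.

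For $\Image(\tau_A) \subseteq I_A^{[2]}$, I would verify by direct expansion, using the commutativity of $A$, the identity
\[ \tau_A(x \tp y \tp a) \;=\; \sig_A(x \tp a) \cdot \sig_A(\id[A] \tp y), \]
with the product taken componentwise in $A \ptp A$. This exhibits each generator $\tau_A(x \tp y \tp a)$ as an honest product of two elements of $I_A$ (here the unitality of $A$ enters, via the factor $\id[A] \tp y$). Given an arbitrary $Z \in A^{\ptp 3}$ written as an absolutely summable combination $\sum_n x_n \tp y_n \tp a_n$, boundedness of $\sig_A$ gives $\sum_n \norm{\sig_A(x_n \tp a_n)} \, \norm{\sig_A(\id[A] \tp y_n)} < \infty$, so $\tau_A(Z) = \sum_n \sig_A(x_n \tp a_n) \cdot \sig_A(\id[A] \tp y_n)$ is a bona fide element of $I_A^{[2]}$.

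For the reverse inclusion, I would verify in the same style the companion identity
\[ \sig_A(x_1 \tp y_1) \cdot \sig_A(x_2 \tp y_2) \;=\; \tau_A(x_1 x_2 \tp y_2 \tp y_1), \]
again by expanding both sides of the product and exploiting commutativity. Since $\sig_A : A \ptp A \to I_A$ is a bounded linear projection onto $I_A$, every element $v \in I_A$ satisfies $v = \sig_A(v)$, so the displayed identity shows that on elementary tensors the product map $I_A \ptp I_A \to A \ptp A$ factors as $\tau_A \circ \Psi$, where $\Psi$ is the bounded linear map $A^{\ptp 4} \to A^{\ptp 3}$ determined on pure tensors by $x_1 \tp y_1 \tp x_2 \tp y_2 \mapsto x_1 x_2 \tp y_2 \tp y_1$. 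Extending by bilinearity and continuity (the product $A \ptp A \to A \ptp A$ is contractive) yields $I_A^{[2]} \subseteq \Image(\tau_A)$.

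The only subtlety I anticipate is the bookkeeping around absolute summability in the projective tensor norms when lifting the two identities from elementary tensors to arbitrary elements; this is routine, since every map in sight is bounded and $\sig_A$ is a norm-bounded projection, so no genuine mathematical obstacle arises. At its core the lemma is a direct algebraic calculation that simply reinterprets $\tau_A$ as the multiplication law on $I_A$, packaged as a map out of $A^{\ptp 3}$.
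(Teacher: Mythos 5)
Your proof is correct, and it is essentially the same as the paper's: the core of both arguments is the elementary-tensor identity
\[
\sig_A(x_1\tp y_1)\,\sig_A(x_2\tp y_2)=\tau_A(x_1x_2\tp y_1\tp y_2),
\]
which exhibits $\tau_A$ as the multiplication of $I_A$ pulled back along $\sig_A\ptp\sig_A$. The paper proves only this identity, observes that the map $\al\colon x_1\tp y_1\tp x_2\tp y_2\mapsto x_1x_2\tp y_1\tp y_2$ is surjective because $A$ is unital, and deduces both inclusions at once from $\pi_{I_A}\circ(\sig_A\ptp\sig_A)=\tau_A\circ\al$. You instead argue the two inclusions separately, proving a second identity $\tau_A(x\tp y\tp a)=\sig_A(x\tp a)\,\sig_A(\id[A]\tp y)$ for the containment $\Image(\tau_A)\subseteq I_A^{[2]}$; note that this is precisely the specialisation of the main identity to $x_2=\id[A]$ (together with the invariance of $\tau_A$ under swapping its last two arguments in the commutative case), so it encodes the same use of unitality that the paper packages as surjectivity of $\al$. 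Both routes are sound and of comparable length; the paper's is marginally slicker by handling both inclusions in one stroke, while yours is slightly more transparent about where each containment comes from. Your remarks about lifting the identities from elementary tensors to general elements via absolute summability in the projective norm are correct, though for the reverse inclusion it is cleaner to phrase the factorisation as an equality of bounded maps on $A^{\ptp 4}$, namely $\pi_{I_A}\circ(\sig_A\ptp\sig_A)=\tau_A\circ\Psi$, and then restrict to $I_A\ptp I_A$, rather than speaking of the product map on $I_A\ptp I_A$ factoring through a map defined on $A^{\ptp 4}$.
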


\begin{proof}
We write $\pi_{I_A}$ for the product map $I_A\ptp I_A\to I_A$\/. Given $x_1,x_2, y_1, y_2 \in A$, we have
\[ \begin{aligned}
 \sig_A(x_1\tp x_2)\sig_A(y_1\tp y_2)
 & = (x_1\tp x_2 - x_1x_2\tp\id[A])(y_1\tp y_2 - y_1y_2\tp\id[A]) \\
 & = x_1y_1\tp x_2y_2 -x_1x_2y_1\tp y_2 - x_1y_1y_2\tp x_2 + x_1x_2y_1y_2\tp\id[A] \\
 & = \tau_A(x_1y_1\tp x_2 \tp y_2) \; .\\
\end{aligned}  \]
Let $\al : A^{\tp 4} \to A^{\ptp 3}$ be given by $\al(x_1\tp x_2\tp y_1\tp y_2)\defeq x_1y_1\tp x_2 \tp y_2$; then the preceding calculation shows that $\tau_A\al =\pi_{I_A}(\sig_A\ptp \sig_A)$. Since $A$ is unital $\al$ is surjective, and we conclude that
\[ I_A^{[2]} = \Image(\pi_{I_A}(\sig_A\ptp \sig_A)) = \Image(\tau_A\al) =\Image(\tau_A) \]
as required.
\end{proof}

From this the following corollary is immediate.
\begin{coroll}\label{c:Hff_of_qKahl}
The Hausdorff\-ifi\-cation of $\qKahl{A}$ is isomorphic, as a Banach $A$-module, to $I_A/\clos{I_A^2}$.
\end{coroll}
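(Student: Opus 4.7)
The plan is to combine the preceding lemma with two routine observations: first, that Hausdorff\-ifi\-cation of a quotient seminormed space $V/W$ (for $V$ a Banach space and $W$ a not-necessarily-closed subspace) is naturally $V/\clos{W}$; and second, that $\clos{I_A^{[2]}} = \clos{I_A^2}$ inside $I_A$.

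First, since $\qKahl{A}=I_A/\Image(\tau_A)$ carries the quotient seminorm from the Banach space~$I_A$, its Hausdorff\-ifi\-cation is, by the definition given in Section~\ref{ss:genprelim}, the Banach space $I_A/\clos{\Image(\tau_A)}$. Because $\Image(\tau_A)$ is a left $A$-submodule of~$I_A$ (and the $A$-action on $I_A$ is continuous), $\clos{\Image(\tau_A)}$ is again a closed $A$-submodule, so this quotient inherits a canonical Banach $A$-module structure and the quotient map is $A$-linear. By the preceding lemma, $\Image(\tau_A)=I_A^{[2]}$, so the Hausdorff\-ifi\-cation is $I_A/\clos{I_A^{[2]}}$ as a Banach $A$-module.

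Next I would show the chain of inclusions $I_A^2\subseteq I_A^{[2]}\subseteq \clos{I_A^2}$. The first is trivial from the definitions. For the second, any element of $I_A^{[2]}$ arises as $\sum_n v_nw_n$ for sequences $(v_n),(w_n)$ in $I_A$ with $\sum_n\norm{v_n}\norm{w_n}<\infty$; the finite partial sums lie in $I_A^2$ and converge in norm to $\sum_n v_nw_n$, so the limit lies in $\clos{I_A^2}$. Taking closures of the sandwich gives $\clos{I_A^{[2]}}=\clos{I_A^2}$.

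Substituting, the Hausdorff\-ifi\-cation of $\qKahl{A}$ is $I_A/\clos{I_A^2}$ as a Banach $A$-module, which is the desired statement. No real obstacle is expected — the only point requiring mild care is checking that the relevant subspaces really are $A$-submodules so that the identifications are $A$-module isomorphisms and not merely isomorphisms of Banach spaces, but this is immediate from commutativity of~$A$.
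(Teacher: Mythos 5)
Your proof is correct and is exactly the argument the paper suppresses by calling the corollary ``immediate'': identify the Hausdorffification of $I_A/\Image(\tau_A)$ with $I_A/\clos{\Image(\tau_A)}$, substitute $\Image(\tau_A)=I_A^{[2]}$ from the preceding lemma, and use the sandwich $I_A^2\subseteq I_A^{[2]}\subseteq\clos{I_A^2}$ (already observed in the paper just before the lemma) to conclude $\clos{I_A^{[2]}}=\clos{I_A^2}$. The remark on $A$-module structures is right and is the only point requiring any thought; there is nothing to add.
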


The point of this corollary is that the Banach $A$-module $I_A/\clos{I_A^2}$ has already been studied, in Runde's paper \cite{Run_Kahl}: it is the natural Banach analogue of the \dt{\Kahler\ module of differentials} for a commutative ring. 
Indeed, the statement and proof of our decomposition theorem for $\qKahl{A\ptp B}$\/ are modelled on the corresponding result and proof for the \Kahler\ module of a tensor product of rings. For example, the idea behind Theorem~\ref{t:qKahl_of_tp} is based on the `product rule' formula
\[ d_C(x\tp y) = (\id[A]\tp y)\cdot d_C(x\tp\id[B]) + (x\tp\id[B])\cdot d_C(\id[A]\tp y) \]
and the identification of $d_C(x\tp \id[B])$, $d_C(\id[A]\tp y)$ with $d_A(x)$ and $d_B(y)$ respectively.
\end{section}

\begin{section}{Hochschild homology via $\Tor^{A}$}
From here on, unless \emph{explicitly} stated otherwise, we let $A$ denote the Banach algebra $\lp{1}(\Z_+)$\/.
The following lemma is taken from the proof of \cite[Propn 7.3]{GouLyWh}.
\begin{lemma}\label{l:Fred's_lemma}
Let $q: \Ho{C}{1}(A,A) \to \lp{1}(\Nat)$ be the bounded linear map defined by $q(\id\tp\id)=0$ and
\[ q(z^k\tp z^l) =\frac{l}{k+l}z^{k+l} \quad(k,l\in\Z_+; k+l \geq 1).\]
Then $q$ is surjective and $\ker(q)=\Ho{B}{1}(A,A)$.
\end{lemma}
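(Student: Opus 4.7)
The plan is as follows. First I would check that $q$ is a well-defined contraction by observing that $0\le\frac{l}{k+l}\le 1$ on each basis element $z^k\tp z^l$, so linearity and continuity extend the prescribed formula to a bounded map $A\ptp A\to\lp{1}(\Nat)$. Surjectivity is immediate from $q(1\tp z^m)=z^m$ for every $m\geq 1$. The inclusion $\Ho{B}{1}(A,A)\subseteq\ker q$ follows from a direct computation on basic $2$-chains: using
\[ \bdy_1(z^i\tp z^j\tp z^k)=z^{i+j}\tp z^k-z^i\tp z^{j+k}+z^{i+k}\tp z^j, \]
one finds $q\bdy_1(z^i\tp z^j\tp z^k)=\tfrac{k-(j+k)+j}{i+j+k}\,z^{i+j+k}=0$ whenever $i+j+k\geq 1$, with trivial vanishing when $i=j=k=0$; continuity extends this to all of $\Ho{C}{2}(A,A)$.

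The key step is the reverse inclusion $\ker q\subseteq\Ho{B}{1}(A,A)$, which I would handle through Proposition~\ref{p:HH1_is_qKahl}. Since $q\bdy_1=0$, the map $q$ descends to $\tilde{q}:\Ho{H}{1}(A,A)\to\lp{1}(\Nat)$, and the isomorphism $\wtild{\sig_A}:\Ho{H}{1}(A,A)\to\qKahl{A}$ converts the problem to showing that the induced map $\tilde{p}:\qKahl{A}\to\lp{1}(\Nat)$, sending $[\sig_A(z^k\tp z^l)]_{\qKahl{A}}\mapsto\tfrac{l}{k+l}z^{k+l}$, is a bounded bijection. The $\tau_A$-relations in $\qKahl{A}$ specialise (with $x=z^k$, $y=z^{l-1}$, $a=z$) to
\[ [\sig_A(z^k\tp z^l)]_{\qKahl{A}}=[\sig_A(z^{k+l-1}\tp z)]_{\qKahl{A}}+[\sig_A(z^{k+1}\tp z^{l-1})]_{\qKahl{A}}, \]
and iterating this (with $[\sig_A(z^m\tp 1)]_{\qKahl{A}}=0$ as the terminating base case) yields the Leibniz identity
\[ [\sig_A(z^k\tp z^l)]_{\qKahl{A}}=\tfrac{l}{k+l}\,[\sig_A(1\tp z^{k+l})]_{\qKahl{A}} \qquad (k+l\geq 1). \]
Defining $r:\lp{1}(\Nat)\to\qKahl{A}$ by $r(z^m)=[\sig_A(1\tp z^m)]_{\qKahl{A}}$ (bounded since $\|\sig_A(1\tp z^m)\|\leq 2$), this Leibniz identity shows that $r$ is a two-sided inverse to $\tilde{p}$; hence $\tilde{q}$ is a bijection, and therefore $\ker q=\Ho{B}{1}(A,A)$.

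I expect the main obstacle to lie precisely in this third step. A direct approach---constructing, for each $(k,l)$, an explicit $T_{k,l}\in A^{\ptp 3}$ with $\bdy_1(T_{k,l})=z^k\tp z^l-\tfrac{l}{k+l}(1\tp z^{k+l})$, and then assembling a boundary for $\xi=\sum c_{k,l}z^k\tp z^l\in\ker q$ as the $\ell^1$-combination $\sum c_{k,l}T_{k,l}$---founders because the natural telescoping preimages have norm growing linearly in $k$, which is incompatible with pure $\ell^1$-summability of the coefficients $(c_{k,l})$. Routing the argument through $\qKahl{A}$ bypasses this: the decisive Leibniz identity holds on the nose as an algebraic consequence of the $\tau_A$-relations, and all quantitative norm control is absorbed into the elementary boundedness of $r$.
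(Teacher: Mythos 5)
Your first two steps (well-definedness and boundedness of $q$, surjectivity via $q(1\tp z^m)=z^m$, and the direct check that $q\bdy_1=0$ so $\Ho{B}{1}(A,A)\subseteq\ker q$) are fine, and the route through $\qKahl{A}$ and Proposition~\ref{p:HH1_is_qKahl} is genuinely different from the paper's proof, which constructs explicit bounded operators $\sB$, ${\sf S}$, ${\sf H}$ on the chain spaces satisfying $q\sB=\sid$ and $(\sid-\sB q){\sf H}=\bdy{\sf S}$, and exploits $\norm{2\sid-{\sf H}}\leq 1$ (so ${\sf H}$ is invertible) to obtain a genuine bounded splitting.

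However, your third step has a real gap. You establish the Leibniz identity and hence $r\circ\tilde{p}=\sid$ on the dense subspace of $\qKahl{A}$ spanned by the classes $[\sig_A(z^k\tp z^l)]$, and you invoke ``linearity and continuity'' to extend this to all of $\qKahl{A}$. But $\qKahl{A}=I_A/\Image(\tau_A)$ is \emph{a priori} only a seminormed space: whether $\Image(\tau_A)$ is closed is essentially the content of the lemma itself (indeed, the conclusion $\ker q=\Ho{B}{1}(A,A)$ is precisely what shows $\Ho{B}{1}(A,A)$ to be closed, hence $\Image\tau_A$ closed under the isomorphism $\wtild{\sig_A}$). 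Two bounded linear maps into a seminormed space that agree on a dense subspace need not coincide; one can only conclude that their difference maps into the zero-seminorm subspace $\clos{\Image\tau_A}/\Image\tau_A$. Unwinding through $\wtild{\sig_A}$, your argument therefore yields $\ker q\subseteq\clos{\Ho{B}{1}(A,A)}$, not $\ker q\subseteq\Ho{B}{1}(A,A)$; combined with the easy inclusion, this gives $\ker q=\clos{\Ho{B}{1}(A,A)}$, which is strictly weaker than what is asserted.

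Your closing paragraph diagnoses the norm-growth obstruction correctly but misidentifies the cure. Passing to $\qKahl{A}$ does not absorb the problem: the Leibniz identity $[\sig_A(z^k\tp z^l)]=\tfrac{l}{k+l}[\sig_A(1\tp z^{k+l})]$ is realised by a $\tau_A$-image of an element of $A^{\ptp 3}$ whose norm grows like $l$, so for $\xi=\sum c_{k,l}\sig_A(z^k\tp z^l)\in I_A$ the termwise corrections only assemble into an element of $\clos{\Image\tau_A}$, not of $\Image\tau_A$. This is exactly the point where the paper's operator ${\sf H}$ enters: because $\norm{2\sid-{\sf H}}\leq 1$, one obtains a bounded inverse ${\sf H}^{-1}$, and $\sid-\sB q=\bdy{\sf S}{\sf H}^{-1}$ exhibits a bounded map sending each $x\in\ker q$ to a genuine preimage under $\bdy$, with uniform norm control. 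Some renormalization device of this kind is indispensable, and your proposal as written does not supply one.
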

We note that the proof of this in \cite{GouLyWh} can be shortened slightly: see Appendix~\ref{app:HH1_YC} for the details.

\begin{coroll}\label{c:Ho1_l1Z+}
$\Ho{H}{1}(A,A)$ is a unit-linked, Banach $A$-module, whose underlying Banach space is isomorphic to $\lp{1}$.
\end{coroll}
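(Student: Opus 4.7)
The claim will follow quickly from Lemma~\ref{l:Fred's_lemma} once we unwind the definitions. First I would observe that for the commutative algebra $A$ with coefficients in itself, the boundary $\bdy_0:\Ho{C}{1}(A,A)\to \Ho{C}{0}(A,A)=A$ is identically zero (since $\bdy_0(x\tp a)=xa-ax=0$), so that $\Ho{Z}{1}(A,A)=\Ho{C}{1}(A,A)=A\ptp A$. Consequently
\[ \Ho{H}{1}(A,A) = \Ho{C}{1}(A,A)/\Ho{B}{1}(A,A). \]

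Next I would invoke Lemma~\ref{l:Fred's_lemma}: the map $q:\Ho{C}{1}(A,A)\to \lp{1}(\Nat)$ is a bounded linear surjection with kernel exactly $\Ho{B}{1}(A,A)$. In particular $\Ho{B}{1}(A,A)$ is closed in $\Ho{C}{1}(A,A)$, so the quotient seminorm on $\Ho{H}{1}(A,A)$ is a genuine Banach space norm. The induced map $\widetilde{q}:\Ho{H}{1}(A,A)\to \lp{1}(\Nat)$ is then a continuous linear bijection between Banach spaces, and by the open mapping theorem it is a topological isomorphism, so the underlying Banach space of $\Ho{H}{1}(A,A)$ is isomorphic to~$\lp{1}$.

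For the module-theoretic part, I would recall from Proposition~\ref{p:CBA_Hoch} that the Hochschild boundary $\bdy_1:\Ho{C}{2}(A,A)\to\Ho{C}{1}(A,A)$ is an $A$-module map when these spaces are given the left $A$-action on the first tensor factor. Since $\Ho{B}{1}(A,A)=\Image(\bdy_1)$ is therefore an $A$-submodule, and we have just seen that it is closed, the quotient $\Ho{H}{1}(A,A)$ is a Banach $A$-module. Unit-linkedness is inherited from $\Ho{C}{1}(A,A)=A\ptp A$, which is manifestly unit-linked (as $A$ is unital and acts via the first factor).

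There is no serious obstacle here: the content is really all in Lemma~\ref{l:Fred's_lemma}. The only thing to be careful about is the separate verification that $\Ho{B}{1}(A,A)$ is a \emph{closed} $A$-submodule --- both properties being required to conclude that the quotient is a Banach $A$-module rather than merely a seminormed one --- and both follow immediately from the identification $\Ho{B}{1}(A,A)=\ker(q)$ combined with Proposition~\ref{p:CBA_Hoch}.
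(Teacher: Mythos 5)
Your argument is correct and follows essentially the same route as the paper: commutativity gives $\Ho{Z}{1}(A,A)=\Ho{C}{1}(A,A)$, Lemma~\ref{l:Fred's_lemma} identifies $\Ho{B}{1}(A,A)$ with the closed subspace $\ker(q)$ and yields $\Ho{H}{1}(A,A)\iso\lp{1}(\Nat)$ as Banach spaces, and the (unit-linked) module structure descends since $\Ho{B}{1}(A,A)$ is a closed $A$-submodule of the unit-linked module $\Ho{C}{1}(A,A)=A\ptp A$. Your invocations of the open mapping theorem and Proposition~\ref{p:CBA_Hoch} merely make explicit what the paper leaves implicit.
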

\begin{proof}
First note that $\Ho{C}{1}(A,A)=\Ho{Z}{1}(A,A)$ (since $A$ is commutative). 

By Lemma~\ref{l:Fred's_lemma}, $\Ho{B}{1}(A,A)$ is a closed linear subspace of $\Ho{C}{1}(A,A)$ and the quotient space $\Ho{C}{1}(A,A)/\Ho{B}{1}(A,A)$ is isomorphic as a Banach space to $\Ho{C}{1}(A,A)/\ker(q) \iso \lp{1}(\Nat)$.

Moreover, $\Ho{B}{1}(A,A)$ is a submodule of the unit-linked $A$-module $\Ho{C}{1}(A,A)$: hence $\Ho{H}{1}(A,A)=\Ho{Z}{1}(A,A)/\Ho{B}{1}(A,A)=\Ho{C}{1}(A,A)/\Ho{B}{1}(A,A)$ is the quotient of a unit-linked Banach $A$-module by a closed submodule, and is thus itself a unit-linked Banach $A$-module as claimed.
\end{proof}

\begin{propn}\label{p:Ho1_l1Z+k}
Let $k\in\Nat$; let $A_1, \ldots, A_k$ denote copies of the Banach algebra $A=\lp{1}(\Z_+)$, and identify the convolution algebra $\fA_k=\lp{1}(\Z_+^k)$ with the tensor product $A_1\ptp\ldots\ptp A_k$.

Then $\Ho{H}{1}(\fA_k,\fA_k)$ is a symmetric, unit-linked, Banach $\fA_k$-bimodule, and we have an isomorphism of Banach $\fA_k$-modules
\[ \Ho{H}{1}(\fA_k,\fA_k) \iso \bigoplus_{i=1}^k A_1\ptp \ldots \ptp \Ho{H}{1}(A_i,A_i)\ptp\ldots \ptp A_k \]
In particular, the underlying Banach space of $\Ho{H}{1}(\fA_k,\fA_k)$ is isomorphic to $\lp{1}$.
\end{propn}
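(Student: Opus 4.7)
The plan is a direct application of Corollary \ref{c:baby_Kunn} combined with Corollary \ref{c:Ho1_l1Z+}, with only a little packaging needed to deliver the extra claims about symmetry, unit-linkedness, and the underlying Banach space.

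First I would verify the hypotheses of Corollary \ref{c:baby_Kunn} in the case $A_i=\lp{1}(\Z_+)$ for $i=1,\ldots,k$. Each such $A_i$ is a unital commutative Banach algebra whose underlying Banach space is the prototypical $\Lp{1}$-space; and by Corollary \ref{c:Ho1_l1Z+} each group $\Ho{H}{1}(A_i,A_i)$ is Banach (in fact, isomorphic as a Banach space to $\lp{1}$). The hypotheses of the baby \Kunneth\ formula are therefore all met, so $\Ho{H}{1}(\fA_k,\fA_k)$ is Banach and we obtain the claimed isomorphism of $\fA_k$-modules
\[ \Ho{H}{1}(\fA_k,\fA_k) \iso \bigoplus_{i=1}^k A_1\ptp \ldots \ptp \Ho{H}{1}(A_i,A_i)\ptp\ldots \ptp A_k. \]

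Next, to read off the Banach space structure I would observe that each summand in this decomposition is a $k$-fold projective tensor product of Banach spaces each isomorphic to $\lp{1}$, and that $\lp{1}(X)\ptp\lp{1}(Y)\iso\lp{1}(X\times Y)$ isometrically. Hence each summand is isomorphic to $\lp{1}$, and so is the finite direct sum of the $k$ summands.

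Finally, for the bimodule claims: since $\fA_k$ is commutative, the natural left $\fA_k$-module structure on $\Ho{H}{1}(\fA_k,\fA_k)$ (inherited via Proposition \ref{p:CBA_Hoch} from the chain complex) extends canonically to a symmetric bimodule by declaring $m\cdot a\defeq a\cdot m$. Unit-linkedness follows from the decomposition above, since $\Ho{H}{1}(A_i,A_i)$ is a unit-linked $A_i$-module (Corollary \ref{c:Ho1_l1Z+}), each of the remaining tensor factors $A_j$ is trivially unit-linked over itself, and a projective tensor product of unit-linked modules is unit-linked over the tensor product algebra. No conceptual obstacle arises; the result is essentially an assembly of previously established pieces, with the harder analytic content (the $\lp{1}$-identification of $\Ho{H}{1}(A,A)$ and the decomposition of $\qKahl{A\ptp B}$) having already been carried out.
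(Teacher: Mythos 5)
Your argument is correct and matches the paper's, which simply cites Corollaries \ref{c:Ho1_l1Z+} and \ref{c:baby_Kunn} as giving the result immediately; your additional remarks on the $\lp{1}$-identification and the bimodule structure just spell out details the paper leaves implicit.
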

\begin{proof}
This is immediate from Corollaries~\ref{c:Ho1_l1Z+} and~\ref{c:baby_Kunn}.
\end{proof}

\begin{thm}\label{t:only_Harr}
Let $N$ be a unit-linked, symmetric $A$-bimodule and let $n\geq 1$. Then the canonical maps
\[ \begin{aligned} \HarH{H}{n}(A,N)  & \rTo \Ho{H}{n}(A,N) \\
 \HarC{H}{n}(A,N) & \rTo \Co{H}{n}(A,N) \end{aligned}\]
induce isomorphisms on homology and cohomology respectively. Moreover, there are isomorphisms of seminormed spaces
\[ \begin{aligned}
 \Ho{H}{n}(A,N) & \iso \Tor_{n-1}^A\left[N_R, \Ho{H}{1}(A,A)\right] & \iso \HarH{H}{n}(A,N) \\
 \Co{H}{n}(A,N) & \iso \Ext^{n-1}_A\left[\Ho{H}{1}(A,A),N_L\right] & \iso \HarC{H}{n}(A,N) \\
 \end{aligned} \]
\end{thm}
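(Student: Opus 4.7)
The strategy is to apply Proposition~\ref{p:Harr-as-Ext} with $B = A = \lp{1}(\Z_+)$ to obtain the $\Tor$/$\Ext$ formulas for Harrison (co)homology, and then to collapse the Hodge decomposition by showing that the non-Harrison summands contribute nothing to Hochschild (co)homology. The $\Tor$/$\Ext$ identifications supplied by Proposition~\ref{p:Harr-as-Ext} give the outer isomorphisms in the statement, and the collapse of the Hodge decomposition gives the inner ones.

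The main technical task is to verify the hypothesis of Proposition~\ref{p:Harr-as-Ext}: that the truncated Harrison chain complex
\[ \Ho{C}{1}(A,A) \lTo \HarH{C}{2}(A,A) \lTo \HarH{C}{3}(A,A) \lTo \cdots \]
is split exact in $\Ban$. By \cite{GouLyWh}, one has $\Ho{H}{n}(A,A) = 0$ for all $n \geq 2$, and the arguments there in fact furnish a bounded contracting homotopy (rather than merely vanishing homology) for the full Hochschild chain complex $\Ho{C}{*}(A,A)$ in positive degrees. Since the BGS idempotent $\sid \ptp \BGS[n]{1}$ is a $\Ban$-bounded chain projection onto the Harrison summand, post-composing the given homotopy with it yields a bounded contracting homotopy of the Harrison subcomplex. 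With this in hand, Proposition~\ref{p:Harr-as-Ext} applies directly and yields
\[ \HarH{H}{n}(A,N) \iso \Tor^A_{n-1}\bigl[N_R,\, \Ho{H}{1}(A,A)\bigr], \qquad \HarC{H}{n}(A,N) \iso \Ext^{n-1}_A\bigl[\Ho{H}{1}(A,A),\, N_L\bigr]. \]
The unit-linkedness of $N$ is what ensures that $N \ptpR{\cu{A}} (\blank)$ and $\lHom{\cu{A}}(\blank, N)$ behave as intended here, in line with the August~2008 correction.

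It remains to prove that the canonical maps $\HarH{H}{n}(A,N) \to \Ho{H}{n}(A,N)$ and $\HarC{H}{n}(A,N) \to \Co{H}{n}(A,N)$ are isomorphisms; equivalently, that $\Ho{H}{i,n-i}(A,N) = 0 = \Co{H}{i,n-i}(A,N)$ whenever $i \geq 2$. Since both $\Ho{C}{*}(A,A)$ in positive degrees and its Harrison summand are split exact in $\Ban$, so is their complementary direct sum $\bigoplus_{i \geq 2} \Ho{C}{i,*}(A,A)$. The functors $N_R \ptpR{\cu{A}} (\blank)$ and $\lHom{\cu{A}}(\blank, N_L)$ preserve split exact sequences of $\cu{A}$-modules, and the chain isomorphisms from the remark after Corollary~\ref{c:BGS_arb-coeff} identify the results with $\Ho{C}{i,*}(A,N)$ and $\Co{C}{i,*}(A,N)$ respectively; this forces the required vanishing of the non-Harrison Hodge components. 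The delicate step, and the main obstacle, is the extraction of genuine split exactness in the first place: in the seminormed category, vanishing of homology does not automatically yield bounded contracting homotopies, so one must genuinely verify that the homotopies implicit in the \Kunneth-type calculations of \cite{GouLyWh} are norm-bounded.
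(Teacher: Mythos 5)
Your broad strategy matches the paper's: establish split exactness of the truncated Harrison complex, feed this into Proposition~\ref{p:Harr-as-Ext}, and use the fact that the non-Harrison Hodge summands of $\Ho{C}{*}(A,A)$ form a contractible complex to collapse the decomposition. The gap is precisely at the point you flag yourself as ``the delicate step.'' You assert that the \Kunneth-type arguments of \cite{GouLyWh} ``in fact furnish a bounded contracting homotopy'' for the Hochschild chain complex of $A$, but you neither verify this nor indicate how one would. The paper does not rely on any such claim. What \cite[Propn~7.3]{GouLyWh} is cited for is only that the complex
\[ 0 \lTo \Ho{H}{1}(A,A) \lTo^q \Ho{C}{1}(A,A) \lTo^{\bdy_1} \Ho{C}{2}(A,A) \lTo \ldots \]
is \emph{exact} as a sequence of Banach spaces, together with the fact that $\Ho{H}{1}(A,A)$ is a Banach space isomorphic to $\lp{1}$. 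From mere exactness in $\Ban$, a bounded contracting homotopy does \emph{not} follow automatically, as you note.

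The missing idea is to exploit the $\lp{1}$-lifting property: since $\Ho{H}{1}(A,A)$ and each $\Ho{C}{n}(A,A)$ are Banach-space isomorphic to $\lp{1}$-spaces, and every boundary map in the exact sequence is an open map onto its (closed) range, one can inductively choose bounded linear sections $\rho_n: \Ho{C}{n}(A,A) \to \Ho{C}{n+1}(A,A)$ satisfying $\bdy_n \rho_n + \rho_{n-1}\bdy_{n-1} = \sid$, starting from a bounded right inverse $\rho_0$ of $q$. This \emph{constructs} the splitting directly from the cited exactness rather than attempting to extract it from the internals of the \Kunneth\ computation. Once the splitting is in hand, one can proceed as you outline (or, as the paper does, observe that both the full and Harrison complexes are admissible $A$-projective resolutions of $\Ho{H}{1}(A,A)$ and invoke the comparison theorem, which gives the $\Tor$/$\Ext$ identifications and the isomorphism with Harrison (co)homology in a single step). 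Without the $\lp{1}$-projectivity input, your argument does not get off the ground.
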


\begin{proof}
By \cite[Propn 7.3]{GouLyWh} the following facts hold:
\begin{itemize}
\item $\Ho{B}{1}(A,A)$ is a closed subspace of $\Ho{C}{1}(A,A)$;
\item the Banach space $\Ho{H}{1}(A,A)$ is isomorphic to $\lp{1}$;
\item the chain complex
\begin{equation}\label{diag:chaincomplex-is-exact}
 0 \lTo \Ho{H}{1}(A,A) \lTo^q \Ho{C}{1}(A,A) \lTo^{\bdy_1} \Ho{C}{2}(A,A)\lTo \ldots \end{equation}
is an exact sequence of Banach spaces.
\end{itemize}

We claim that
the complex \eqref{diag:chaincomplex-is-exact} is not merely exact, but is split exact in $\Ban$. This is proved inductively, as follows. Since $\Ho{H}{1}(A,A)$ is isomorphic as a Banach space to $\lp{1}$, the lifting property of $\lp{1}$-spaces with respect to open mappings allows us to find a bounded linear map $\rho_0: \Ho{H}{1}(A,A)\to \Ho{C}{1}(A,A)$ such that $q\rho_0=\sid$. Then since $\bdy_1$ surjects onto $\ker(q)$, and since $\Ho{C}{1}(A,A)$ is isomorphic as a Banach space to $\lp{1}$, the aforementioned lifting property of $\lp{1}$-spaces allows us to find a bounded linear map $\rho_1: \Ho{C}{1}(A,A) \to \Ho{C}{2}(A,A)$ such that $\bdy_1\rho_1=\sid - \rho_0 q$.
 Continuing in this way, at each stage using the fact that each $\Ho{C}{n}(A,A)$ is isomorphic to an $\lp{1}$-space, we may inductively construct bounded linear maps $\rho_n: \Ho{C}{n}(A,A) \to \Ho{C}{n+1}(A,A)$ such that $\bdy_n\rho_n+\rho_{n-1}\bdy_{n-1}=\sid$. 

Now let $\pi=\sid\ptp \BGS[\blob]{1}: \Ho{C}{\blob}(A,A)\to \HarH{C}{\blob}(A,A)$ be the BGS projection onto the Harrison summand. $\pi$ is a chain map, so we have a commuting diagram in $\LMod{A}$:
\begin{equation}
\begin{diagram}[tight,height=2em,width=4.5em]\label{diag:all-in-Harr}
0  \leftarrow & \Ho{H}{1}(A,A) & \lTo^q & \Ho{C}{1}(A,A) & \lTo^{\bdy_1} & \Ho{C}{2}(A,A) & \lTo^{\bdy_2} & \ldots \\
 & \dEq & & \dEq & & \dTo_{\pi_2} & &   \\
0 \leftarrow & \HarH{H}{1}(A,A) & \lTo_q & \HarH{C}{1}(A,A) & \lTo_{\bdy_1} & \HarH{C}{2}(A,A) & \lTo_{\bdy_2} &  \ldots 
\end{diagram}
\end{equation}
We have already observed that the top row of \eqref{diag:all-in-Harr} is split exact in $\Ban$. Since $\pi$ is a \emph{chain projection}, the bottom row is a direct summand of the top row and therefore (by a standard diagram-chase) must itself be split exact in $\Ban$.

\emph{Thus both rows are admissible resolutions of $\Ho{H}{1}(A,A)$ by $A$-projective Banach modules.} Since $\pi$ is left inverse to the inclusion chain map $\iota: \HarH{C}{*}(A,A)\to \Ho{C}{*}(A,A)$, the standard comparison theorem for projective resolutions tells us that $\iota\pi$ is chain homotopic to the identity. Therefore each of the induced chain maps
\[ \begin{diagram}
 N\ptpR{A}\Ho{C}{*+1}(A,A)  & \rTo^{\sid_N\ptpR{A}\pi} &  N\ptpR{A}\HarH{C}{*+1}(A,A)\\
 \lHom{A}\left( \Ho{C}{*+1}(A,A) , N \right)  & \lTo_{\lHom{A}(\pi, N)}  &  \lHom{A}\left( \HarH{C}{*+1}(A,A) , N \right)  \\
\end{diagram} \]
is chain homotopic to the identity, hence induces isomorphism on (co)homology.

Moreover, since $\Tor$ and $\Ext$ may be calculated using $A$-projective resolutions in the first variable,
\[ H_m\left[ N\ptpR{A}\Ho{C}{*+1}(A,A) \right] \iso \Tor_{m}^A\left[N_R, \Ho{H}{1}(A,A)\right] \iso H_m\left[ N\ptpR{A}\HarH{C}{*+1}(A,A) \right] \]
and
\[ H^m\left[ \lHom{A}\left( \Ho{C}{*+1}(A,A) , N \right) \right] \iso \Ext^m_A\left[\Ho{H}{1}(A,A),N_L\right] \iso H^m\left[ \lHom{A}\left( \HarH{C}{*+1}(A,A) , N \right) \right] \;.\]

By Proposition~\ref{p:CBA_Hoch} and Equations~\eqref{eq:UCT_BGSho}, \eqref{eq:UCT_BGSco}, there are chain isomorphisms
\[ \begin{aligned} \Ho{C}{*}(A,N) & \miso N\ptp_A \Ho{C}{*}(A,A) \\
\Co{C}{*}(A,N) & \miso \lHom{A} \left(\Ho{C}{*}(A,A), N\right)  \end{aligned} \]
and
\[ \begin{aligned} N\ptpR{A}\HarH{C}{*}(A,A) & \iso \HarH{C}{*}(A,N) \\
 \lHom{A}\left(\HarC{C}{*}(A,A), N\right)  & \iso \HarC{C}{*}(A,N)\;.
\end{aligned}\]
Under these chain isomorphisms we identify $\sid_N\ptpR{A}\pi$ with the BGS projection of $\Ho{C}{*}(A,N)$ onto $\HarH{C}{*}(A,N)$ and identify $\lHom{A}(\pi, N)$ with the inclusion of $\HarC{C}{*}(A,N)$ into $\Co{C}{*}(A,N)$. By the previous remarks both these maps induce isomorphism on (co)ho\-mology, and we are done.
\end{proof}

We shall build on this idea slightly to obtain partial results for cohomology of $\fA_k$. Our approach requires some results on the purely algebraic Hochschild homology groups $\Ho[\alg]{H}{*}(\sR_k, \sR_k)$, where $\sR_k$ denotes the polynomial algebra $\Cplx[z_1,\ldots,z_k]$.
\begin{thm}\label{t:polyring_HH}
Let $n \geq 2$. Then $\Ho[\alg]{H}{i,n-i}(\sR_k, \sR_k)=0$ for $1\leq i\leq n-1$.
\end{thm}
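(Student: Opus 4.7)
The plan is to combine the Hochschild--Kostant--Rosenberg (HKR) theorem with the Hodge decomposition to show that the algebraic Hochschild homology of the smooth commutative $\Cplx$-algebra $\sR_k$ is entirely concentrated in the top Hodge summand. HKR (see e.g.~\cite[Thm~9.4.7]{Weibel}) asserts that the antisymmetrisation map
\[ \epsilon_n: \Omega^n_{\sR_k/\Cplx} \to \Ho[\alg]{C}{n}(\sR_k,\sR_k), \qquad f_0\,df_1\wedge\cdots\wedge df_n \mapsto \sum_{\sig\in\Sym{n}}(-1)^{\sig}\, f_0\tp f_{\sig(1)}\tp\cdots\tp f_{\sig(n)} \]
induces an isomorphism $\Omega^n_{\sR_k/\Cplx} \iso \Ho[\alg]{H}{n}(\sR_k,\sR_k)$, where $\Omega^*_{\sR_k/\Cplx}$ denotes the free exterior $\sR_k$-algebra on the symbols $dz_1,\ldots,dz_k$.

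The crucial observation is that, by Theorem~\ref{t:BGSalt}, the idempotent $\BGS[n]{n}$ is (up to the scalar $n!$) exactly the antisymmetriser on $\sR_k^{\tp n}$. Consequently the image of $\epsilon_n$ lies entirely inside the top Hodge summand $\Ho[\alg]{C}{n,0}(\sR_k,\sR_k)$: each HKR cycle representing a non-zero class is already supported in the $(n,0)$-piece of the Hodge decomposition at the chain level.

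The conclusion is then formal. By Theorem~\ref{t:BGS_alg} the Hodge decomposition is a decomposition of chain complexes, so $\Ho[\alg]{H}{n}(\sR_k,\sR_k) = \bigoplus_{i=1}^{n}\Ho[\alg]{H}{i,n-i}(\sR_k,\sR_k)$, with the inclusion of each summand corresponding to the inclusion of a direct factor. Since HKR accounts for the whole of $\Ho[\alg]{H}{n}(\sR_k,\sR_k)$ from within the $(n,0)$-summand, the complementary direct summands $\Ho[\alg]{H}{i,n-i}(\sR_k,\sR_k)$ for $1\le i\le n-1$ must all vanish, which gives the stated result.

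The main thing to be careful about is matching the HKR isomorphism to the antisymmetrisation $\epsilon_*$ \emph{at the chain level}, rather than merely up to chain homotopy: if one only knows a quasi-isomorphism without identifying the representative, the argument that ``cycles live in $\Ho[\alg]{C}{n,0}$'' does not immediately go through. If a cited form of HKR is insufficiently explicit, the fall-back is a direct computation via the bimodule Koszul resolution of $\sR_k$ over $\sR_k\tp\sR_k$: tensoring this resolution over $\sR_k\tp\sR_k$ by $\sR_k$ yields the complex $\Omega^*_{\sR_k/\Cplx}$ with zero differentials, and a straightforward verification shows that the resulting canonical map into $\Ho[\alg]{C}{*}(\sR_k,\sR_k)$ agrees with $\epsilon_*$.
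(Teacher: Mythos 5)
Your argument is correct, and it fills in a proof where the paper merely cites the literature (the remark following the theorem attributes the statement to folklore, points to the discussion around \cite[Coroll~8.8.9]{Weibel}, and notes that Barr \cite{Barr_Harr} proves only the Harrison case $i=1$ by a dimension count). The HKR-plus-Hodge route you take is exactly what is implicit in the Weibel reference: since $\epsilon_n(f_0\,df_1\wedge\cdots\wedge df_n) = f_0\tp \bigl(n!\,\BGS[n]{n}\bigr)(f_1\tp\cdots\tp f_n)$, the image of $\epsilon_n$ lands in the $(n,0)$-summand at the level of chains, and since the Hodge idempotents give a direct-sum decomposition of the full chain complex (Theorem~\ref{t:BGS_alg}), cycles of BGS type $(n,0)$ give classes only in $\Ho[\alg]{H}{n,0}$; surjectivity of the HKR map then forces the complementary summands to vanish. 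Your caveat about needing the HKR quasi-isomorphism to be realised \emph{at the chain level} by antisymmetrisation is the right thing to worry about, and the standard statement does provide this. By contrast, Barr's argument for $i=1$ avoids HKR entirely: one first reduces (via a K\"unneth/flatness argument) to computing $\Ho[\alg]{H}{*}(\sR_k,\Cplx)$, which is finite-dimensional in each degree, and then counts dimensions against the known Koszul/exterior-algebra answer to see that the Harrison piece must be zero. That method is more elementary and self-contained but, as stated, only covers the Harrison summand; your HKR argument is shorter and handles all $1\le i\le n-1$ at once, at the cost of invoking a substantial theorem. Both are sound, and your version is the one best suited to the full statement being proved.
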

Informally, the theorem tells us that the simplicial homology of a polynomial algebra is confined to the Lie component.

\begin{remstar}
Theorem~\ref{t:polyring_HH} appears to be part of the folklore in commutative algebra and cohomology. The statement may be found in the remarks before \cite[Coroll~8.8.9]{Weibel} (though its proof is deferred to a later exercise). A proof of the special case $i=1$ (i.e.~for the Harrison summand, which is in fact all we will need) is given in \cite[Propn~3.1]{Barr_Harr}: first, one reduces the problem to one involving $\Ho[\alg]{H}{*}(\sR_k,\Cplx)$\/; then one applies a dimension-counting argument.
\end{remstar}

We shall use Theorem~\ref{t:polyring_HH}, combined with analytic results from \cite{GouJohWh} and \cite{GouLyWh}, to derive the analogous result for the simplicial homology of $\fA_k\iso\lp{1}(\Z_+^k)$. To pass between the algebraic and analytic settings we need a good way to approximate simplicial cycles on $\fA_k$ by simplicial cycles on $\sR_k$; this is done by establishing a suitable `density lemma' (Lemma~\ref{l:dense_alg-cyc} below).


Identify $\sR_k$ with the dense subalgebra of $\fA_k$ spanned by polynomials. The inclusion homomorphism $\sR_k \hookrightarrow \fA_k$ yields an inclusion of chain complexes
$\Ho[\alg]{C}{*}(\sR_k,\sR_k)\hookrightarrow \Ho{C}{*}(\fA_k,\fA_k)$\/. 
Identifying $\Ho{C}{n}(\fA_k, \fA_k)$ with $\lp{1}(\Z_+^k\times\ldots\times\Z_+^k)$, we see that $\Ho[\alg]{C}{n}(\sR_k,\sR_k)$ is dense in $\Ho{C}{n}(\fA_k,\fA_k)$ for each $n$.

We use multi-index notation, so that monomials in $\sR_k$ are written as $z^\al$ rather than $z_1^{\al_1}\dotsb z_n^{\al_n}$.
\begin{defn}
A \dt{monomial chain} in $\Ho[\alg]{C}{n}(\sR_k.\sR_k)$ is just a tensor of the form
\[ x= z^{\al(0)}\tp z^{\al(1)}\tp \ldots\tp z^{\al(n)} \]
where $\al(0),\al(1), \ldots, \al(n) \in\Z_+^k$. The \dt{total degree} of $x$ is the $k$-tuple {$\al(0)+\al(1)+\ldots+\al(n)$}, and is denoted by $\deg(x)$.

Given $N \in \Z_+^k$, we let $\pi^N_n:\Ho{C}{n}(\fA_k,\fA_k)\to\Ho{C}{n}(\fA_k,\fA_k)$ denote the norm-1 projection onto the closed linear span of the monomial chains with total degree $N$. More precisely, we define $\pi^N_n$ on monomial chains by
\[ \pi^N_n(x)\defeq  \left\{ \begin{aligned}
x & \quad\text{ if $\deg(x)=N$} \\
0 & \quad\text{ otherwise} \\
\end{aligned} \right\} \]
and extend by linearity and continuity.
\end{defn}
It is clear from this explicit definition that $\pi^N_n$ commutes with the action of $\sid\tp S_n$ on $\Ho{C}{n}(\fA_k,\fA_k)$, and hence commutes with each of the BGS idempotents $(\BGS[n]{i})_{i=1}^n$.

We \emph{claim} that $\pi^N_*$ is a chain map, i.e. that
 $\pi^N_n \bdy_n = \bdy_n\pi^N_{n+1}$ where $\bdy_n : \Ho{C}{n+1}(\fA_k,\fA_k) \to \Ho{C}{n}(\fA_k,\fA_k)$ is the Hochschild boundary map. Since $\bdy_n=\sum_{i\geq 0} (-1)^i \face[n]{i}$ is the alternating sum of face maps, it suffices to show that $\pi^N_n\face[n]{i}=\face[n]{i}\pi^N_{n+1}$ for each $i$. But this is immediate once we observe that each face map $\face[n]{i}:\Ho{C}{n+1}(\fA_k,\fA_k) \to \Ho{C}{n}(\fA_k,\fA_k)$ preserves the total degree of monomial chains, and so our claim is proved.

Given $N\in\Z_+^k$ and $n\in\Nat$, there are only finitely many monomial $n$-chains of degree~$N$; hence the range of $\pi^N_n$ is contained in $\Ho[\alg]{C}{n}(\sR_k,\sR_k)$. 
Therefore, for each $m \in \Nat$ we may define a chain projection $P^m_*: \Ho{C}{*}(\fA_k,\fA_k)\to \Ho{C}{*}(\fA_k,\fA_k)$ by
\[ P^m_n \defeq \sum_{N\in\Z_+^k\st \abs{N}\leq m} \pi^N_n  \]
By the remarks above, $P^m_n$ takes values in $\Ho[\alg]{C}{n}(\sR_k, \sR_k)$, and for every $n$-chain $x$ we have
\[ P^m_n x \to x \quad\text{ as $m \to \infty$\/.} \]
Moreover, $P^m$ commutes with the BGS projections.

\medskip
We now have everything in place for the following technical lemma.
\begin{lemma}[Density lemma]\label{l:dense_alg-cyc}
Let $1\leq i \leq n$ and let $x \in \Ho{Z}{i,n-i}(\fA_k,\fA_k)$. Then for every $\veps >0$ there exists $y \in \Ho[\alg]{Z}{i,n-i}(\sR_k,\sR_k)$ with $\norm{x-y}\leq \veps$.
\end{lemma}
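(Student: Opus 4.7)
The plan is to set $y := P^m_n x$, where $P^m_*$ is the chain projection built in the paragraph preceding the lemma statement, for $m$ sufficiently large. To establish the lemma I need to verify three properties of $y$: first, that $y$ lies in the algebraic chain space $\Ho[\alg]{C}{n}(\sR_k,\sR_k)$; second, that $y$ is a Hochschild cycle; and third, that $y$ lies in the BGS summand of type $(i,n-i)$. Each one is almost immediate from the setup.

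For the first property, $P^m_n$ is by definition a finite sum of the degree projections $\pi^N_n$, each of which has finite-dimensional image spanned by monomial chains of total degree $N$, so the range of $P^m_n$ is contained in $\Ho[\alg]{C}{n}(\sR_k,\sR_k)$. For the second property, the chain-map property of $P^m_*$ (already established in the setup) gives $\bdy_{n-1} y = \bdy_{n-1} P^m_n x = P^m_{n-1} \bdy_{n-1} x = 0$. For the third, since each $\pi^N_n$ commutes with the permutation action on tensor factors, so does $P^m_n$, and hence $P^m_n$ commutes with the BGS idempotent $\sid\ptp\BGS[n]{i}$; applying this idempotent to $y$ yields
\[ (\sid\ptp\BGS[n]{i})\, y = P^m_n\bigl((\sid\ptp\BGS[n]{i}) x\bigr) = P^m_n x = y, \]
where in the middle step I use that $x$ is already in the BGS summand of type $(i,n-i)$.

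The final approximation claim then follows from the pointwise convergence $P^m_n x \to x$ as $m\to\infty$: just choose $m$ large enough that $\norm{x - P^m_n x}\leq\veps$, and take $y=P^m_n x$.

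There is no real obstacle remaining in this particular argument; all the substantive work has already been done in constructing $P^m_*$ with the right commutation properties. The decisive observations in that construction were that each face map preserves the total degree of a monomial chain, so that the degree projections $\pi^N_n$ assemble into a chain projection, and that permutations of tensor factors preserve total degree, so that $\pi^N_n$ commutes with each BGS idempotent. Once those two observations are in place, the density lemma reduces to a bookkeeping exercise.
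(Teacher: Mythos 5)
Your proposal is correct and follows essentially the same approach as the paper: take $y = P^m_n x$ for $m$ large, and use the already-established facts that $P^m_*$ lands in the algebraic chain complex, is a chain map, and commutes with the BGS idempotents.
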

\begin{proof}
We know that $P^m_n(x) \to x$ as $m\to\infty$. Choose $M$ such that $\norm{P^M_n(x)-x}\leq \veps$ and let $y\defeq P^M_n(x)\in\Ho[\alg]{C}{n}(\sR_k,\sR_k)$. Since $P^M$ is a chain map,
\[ \bdy y = \bdy P^M_n(x) = P^M_{n-1}\bdy(x) = 0 \]
and thus $y \in \Ho[\alg]{Z}{n}(\sR_k,\sR_k)$.  Finally,
\[ \BGS[n]{i}y =\BGS[n]{i} P^M_n(x) = P^M_n\BGS[n]{i}(x) = P^M_n(x) = y \]
and thus $y$ has BGS type $(i,n-i)$ as required.
\end{proof}

\begin{propn}[Simplicial homology confined to Lie component]
\label{p:no_higher_non-Lie}
Let $n \geq 2$ and let $1\leq i\leq n-1$. Then $\Ho{H}{i,n-i}(\fA_k,\fA_k)=0$.
\end{propn}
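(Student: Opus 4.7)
The plan is to transfer the purely algebraic vanishing statement $\Ho[\alg]{H}{i,n-i}(\sR_k,\sR_k)=0$ (Theorem~\ref{t:polyring_HH}) across the density of $\sR_k$ inside $\fA_k$, using Lemma~\ref{l:dense_alg-cyc} together with the fact that $\Ho{H}{n}(\fA_k,\fA_k)$ is Banach (a consequence of \cite{GouLyWh}).

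First I would fix a cycle $x\in\Ho{Z}{i,n-i}(\fA_k,\fA_k)$ and apply the density lemma to choose approximations $y_m\in\Ho[\alg]{Z}{i,n-i}(\sR_k,\sR_k)$ with $y_m\to x$. Since $n\geq 2$ and $1\leq i\leq n-1$, Theorem~\ref{t:polyring_HH} supplies algebraic chains $w_m\in\Ho[\alg]{C}{n+1}(\sR_k,\sR_k)$ with $y_m=\bdy_n w_m$, and viewing each $w_m$ inside $\Ho{C}{n+1}(\fA_k,\fA_k)$ immediately places $y_m$ in $\Ho{B}{n}(\fA_k,\fA_k)$. Because the BGS projection is a chain map (Corollary~\ref{c:BGS_arb-coeff}), replacing $w_m$ by $(\sid\ptp\BGS[n+1]{i})w_m$ does not alter $y_m$ but ensures that each $y_m$ actually lies in the subspace $\Ho{B}{i,n-i}(\fA_k,\fA_k)$ of BGS-type-$(i,n-i)$ boundaries.

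The key remaining task is to close the gap between ``$x$ is a limit of BGS-type-$(i,n-i)$ boundaries'' and ``$x$ is itself such a boundary''. For this I would argue that
\[ \Ho{B}{i,n-i}(\fA_k,\fA_k) = \Ho{B}{n}(\fA_k,\fA_k)\cap\Ho{C}{i,n-i}(\fA_k,\fA_k), \]
again using the fact that $\sid\ptp\BGS[n]{i}$ commutes with $\bdy_n$. The second factor is closed, being the range of a bounded idempotent on a Banach space, while the first is closed precisely when $\Ho{H}{n}(\fA_k,\fA_k)$ is Banach; this in turn follows by combining \cite[Propn 7.3]{GouLyWh} with the \Kunneth\ theorem of the same paper. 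Hence $\Ho{B}{i,n-i}(\fA_k,\fA_k)$ is closed, $x=\lim y_m$ lies in it, and $\Ho{H}{i,n-i}(\fA_k,\fA_k)=0$.

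The step I expect to cost the most thought is verifying that each individual BGS-type boundary space is closed. It is tempting to think this is automatic from Banach-ness of the total homology group, and in the end it is, but only because the BGS decomposition respects the Hochschild boundary; without Corollary~\ref{c:BGS_arb-coeff} one could not align the algebraic null-homologies extracted from Theorem~\ref{t:polyring_HH} with the analytic boundary space that the \Kunneth\ calculation of \cite{GouLyWh} controls.
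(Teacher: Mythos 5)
Your proof is correct and relies on the same essential inputs as the paper's (Theorem~\ref{t:polyring_HH}, the density Lemma~\ref{l:dense_alg-cyc}, the chain-map property of the BGS idempotents, and the information from~\cite{GouLyWh} that the Hochschild boundary maps on $\Ho{C}{*}(\fA_k,\fA_k)$ have closed range), but the ``closing the gap'' step is organised differently. You argue softly: the $(i,n-i)$-boundary space equals $\Ho{B}{n}(\fA_k,\fA_k)\cap\Ho{C}{i,n-i}(\fA_k,\fA_k)$, which is closed because each factor is closed (one being the range of a bounded idempotent, the other because $\Ho{H}{n}(\fA_k,\fA_k)$ is Banach), so a limit of BGS-type boundaries is again such a boundary. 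The paper instead quotes the open-mapping form of the same fact, fixes an openness constant $C$ for $\bdy_n$, and runs an explicit $\veps$-improvement iteration to build a chain $u$ with $\bdy u = x$ and a quantitative bound $\norm{u}\leq (1-\veps)^{-1}(1+\veps)^2 C\norm{x}$. The two formulations are equivalent by the open mapping theorem, so the core content is the same; your version is slightly leaner for the qualitative vanishing statement, while the paper's version comes bundled with a norm estimate on the bounding chain. One small remark: once you invoke the identity $\Ho{B}{i,n-i}=\Ho{B}{n}\cap\Ho{C}{i,n-i}$, the step of replacing $w_m$ by $(\sid\ptp\BGS[n+1]{i})w_m$ becomes superfluous, since $y_m$ already lies in both $\Ho{B}{n}$ and $\Ho{C}{i,n-i}$; it does no harm, but it's not load-bearing.
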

\begin{proof}
By \cite[Thm 7.5]{GouLyWh} we know that the boundary maps on the Hochschild chain complex $\Ho{C}{*}(\fA_k,\fA_k)$ are open mappings. Let $C$ be the constant of openness of the boundary map $\bdy_n:\Ho{C}{n+1}(\fA_k,\fA_k)\to\Ho{C}{n}(\fA_k,\fA_k)$.

Fix $\veps \in (0,1)$ and let $x\in\Ho{Z}{i,n-i}(\fA_k,\fA_k)$.

\medskip\noindent{\bf Claim:} there exists $\gm \in \Ho[\alg]{C}{i,n+1-i}(\sR_k, \sR_k)$ with $\norm{\gm}\leq C(1+\veps)^2\norm{x}$ and $\norm{x-\bdy\gm}\leq \veps\norm{x}$.
\medskip

Assuming that the claim holds, a standard inductive approximation argument may be used to produce $u \in \Ho[\alg]{C}{i,n+1-i}(\sR_k,\sR_k)$ with $\norm{u}\leq (1-\veps)^{-1}(1+\veps)^2 C\norm{x}$ and $\bdy u=x$; in particular $x\in\Ho{B}{i,n-i}(\fA_k,\fA_k)$. Since $x$ was an arbitrary cycle of type $(i,n-i)$, this shows that $\Ho{Z}{i,n-i}(\fA_k,\fA_k)=\Ho{B}{i,n-i}(\fA_k,\fA_k)$ as required.

It therefore suffices to prove that we can find such a $\gm$, which we do as follows. By our density lemma~\ref{l:dense_alg-cyc} we know there exists $y \in \Ho[\alg]{Z}{i,n-i}(\sR_k,\sR_k)$ with $\norm{x-y}\leq\veps\norm{x}$. By Theorem~\ref{t:polyring_HH}, $y=\bdy w$ for some $(n+1)$-chain $w$ on $\sR_k$. Regard $w$ as an element of $\Ho{C}{n+1}(\fA_k,\fA_k)$: since $\bdy_n$ is open with constant $C$ there exists an $(n+1)$-chain $\gm$ on $\fA_k$ such that $\bdy\gm=\bdy w= y$ and $\norm{\gm}\leq C(1+\veps)\norm{y}\leq C(1+\veps)^2\norm{x}$. This proves our claim and hence concludes the proof of the theorem.
\end{proof}

\begin{lemma}\label{l:induced_module}
Let $B$, $C$ be unital Banach algebras, let $M$ be a left Banach $B\ptp C$-module, let $X$ be a left Banach $C$-module and let $M_C$ be the left Banach $C$-module obtained by letting $C$ act via the homomorphism $C \to B\ptp C, c \mapsto \id[B]\tp c$.

Then for each $n$,
\[ \Ext_{B\ptp C}^n (B\ptp X, M) \iso \Ext_C^n (X, M_C) \]
\end{lemma}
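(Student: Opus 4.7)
The statement is a Banach-algebraic version of the classical change-of-rings adjunction, so the natural strategy is to realise $B\ptp(\blank):\LMod{C}\to\LMod{B\ptp C}$ as the left adjoint of the restriction functor $(\blank)_C:\LMod{B\ptp C}\to\LMod{C}$ (using the unital homomorphism $c\mapsto\id[B]\tp c$), and then transfer resolutions across this adjunction.

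First I would establish the degree-zero isomorphism
\[ \lHom{(B\ptp C)}(B\ptp X,M) \iso \lHom{C}(X, M_C),\]
natural in $X$ and in $M$. In one direction, send $\varphi:B\ptp X\to M$ to $x\mapsto \varphi(\id[B]\tp x)$; this lands in $M_C$ and is $C$-linear because $c\cdot(\id[B]\tp x)=(\id[B]\tp c)(\id[B]\tp x)$. In the other, send $\psi:X\to M_C$ to the unique bounded $B\ptp C$-linear map $b\tp x\mapsto (b\tp\id[C])\psi(x)$; the universal property of $\ptp$ gives boundedness, and $B\ptp C$-linearity is a direct calculation on elementary tensors $b'\tp c'$. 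These two constructions are mutually inverse and bounded by the norms involved, giving the asserted isomorphism of seminormed spaces (in fact an isometry).

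Next I would check that $B\ptp(\blank)$ is exact on admissible sequences and preserves projectivity. Exactness is immediate: if $0\to X'\to X\to X''\to 0$ is $\Ban$-split in $\LMod{C}$, applying $\id[B]\ptp(\blank)$ to the bounded linear splitting gives a bounded linear splitting of $0\to B\ptp X'\to B\ptp X\to B\ptp X''\to 0$ in $\LMod{B\ptp C}$. For projectivity, note that the restriction functor $(\blank)_C$ obviously preserves admissibility of short exact sequences (it is the identity at the Banach-space level), so by the adjunction already proved, whenever $P$ is a projective Banach $C$-module the functor $\lHom{(B\ptp C)}(B\ptp P,\blank)\iso\lHom{C}(P,(\blank)_C)$ is exact on admissible short exact sequences in $\LMod{B\ptp C}$; this is precisely the statement that $B\ptp P$ is projective in $\LMod{B\ptp C}$.

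Combining these two properties, any admissible projective resolution $P_\bullet\to X$ of $X$ in $\LMod{C}$ yields, upon applying $B\ptp(\blank)$, an admissible projective resolution $B\ptp P_\bullet\to B\ptp X$ in $\LMod{B\ptp C}$. Hence $\Ext^n_{B\ptp C}(B\ptp X,M)$ may be computed as the cohomology of $\lHom{(B\ptp C)}(B\ptp P_\bullet, M)$, and the natural adjunction isomorphism identifies this cochain complex with $\lHom{C}(P_\bullet, M_C)$, whose cohomology is $\Ext^n_C(X,M_C)$. The main thing to be careful about is the Banach-algebraic framework of projectivity and admissibility throughout (Helemski\u\i's setting); once the adjunction is set up boundedly and naturality is verified, everything formal carries through, so I do not expect any essential obstacle beyond book-keeping.
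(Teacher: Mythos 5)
Your proposal is correct and follows essentially the same route as the paper: tensor an admissible $C$-projective resolution of $X$ with $B$ to obtain an admissible $B\ptp C$-projective resolution of $B\ptp X$, then pass through the change-of-rings adjunction $\lHom{(B\ptp C)}(B\ptp(\blank),M)\iso\lHom{C}(\blank,M_C)$. The paper works with the bar resolution specifically and simply asserts that $B\ptp P_n$ is $B\ptp C$-projective ``since $B$ is unital''; your argument that $B\ptp(\blank)$ preserves projectivity because its right adjoint, restriction along $c\mapsto\id[B]\tp c$, preserves admissibility is a slightly cleaner and more structural way to reach the same conclusion, and it also makes explicit the degree-zero adjunction isomorphism that the paper uses without comment in identifying $\lHom{(B\ptp C)}(B\ptp P_\blob,M)$ with $\lHom{C}(P_\blob,M_C)$.
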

\begin{proof}
Let $0\leftarrow X \leftarrow P_\blob$ be the standard \dt{bar resolution} of $X$ by left $C$-projective modules (see \cite[Propn 2.9]{Hel_HBTA}). This complex is split exact in $\Ban$\/: hence, by functoriality of $B\ptp\blank: \Ban \to \LunMod{B}$, the complex $0\leftarrow B\ptp X  \leftarrow B\ptp P_\blob$ is an admissible complex of Banach $B$-modules and module maps. Moreover, since $B$ is unital, it is easily checked that $B\ptp P_n$ is $B\ptp C$-projective for every $n$. Thus $B\ptp P_\blob$ is an admissible $B\ptp C$-projective resolution of $B\ptp X$\/, and so
\[ \begin{aligned}
\Ext_{B\ptp C}^n (B\ptp X, M) & \iso H^n\left[ \lHom{B\ptp C} (B\ptp P_\blob, M) \right] \\
& = H^n\left[ \lHom{C}(P_\blob, M) \right] 
& \iso \Ext_C^n (X, M_C) \\
\end{aligned}  \]
as claimed.
\end{proof}

We can now prove the main result of this paper. As in the statement of Proposition~\ref{p:Ho1_l1Z+k}, let us identify $\fA_k$ with the $k$-fold tensor product $A_1\ptp \ldots \ptp A_k$, where each $A_i$ denotes a copy of the Banach algebra $A$.

\begin{thm}\label{t:Harr_of_Ak}
Let $M$ be a unit-linked symmetric $\fA_k$-bimodule. For each $i=1,\ldots, k$ the inclusion of $A_i$ into $\fA_k$ induces an $A_i$-bimodule structure on $M$; denote the resulting symmetric $A_i$-bimodule by $M_i$. Then for $n \geq 1$,
\[ \HarC{H}{n}(\fA_k,M)\iso \bigoplus_{i=1}^k \Ext_{A_i}^{n-1}(\Ho{H}{1}(A_i,A_i),M_i) \iso \bigoplus_{i=1}^k \Co{H}{n}(A_i,M_i) \]
\end{thm}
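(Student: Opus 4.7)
The plan is to chain together four earlier results: Proposition~\ref{p:Harr-as-Ext} (which realises Harrison cohomology as $\Ext$), Proposition~\ref{p:Ho1_l1Z+k} (which decomposes $\Ho{H}{1}(\fA_k,\fA_k)$), Lemma~\ref{l:induced_module} (change of rings for tensor-induced modules), and Theorem~\ref{t:only_Harr} (the $k=1$ case of the theorem). The bimodule $M$ is assumed symmetric and unit-linked over $\fA_k$, and since the identity of $\fA_k$ is the tensor product of the identities of the $A_i$, both properties descend to each $M_i$ over $A_i$, so every hypothesis in sight is met. The substantive first step is to verify the split-exactness hypothesis of Proposition~\ref{p:Harr-as-Ext} for $B=\fA_k$; once that is in hand, the rest of the argument is formal.

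To see that the truncated Harrison complex
\[
\Ho{C}{1}(\fA_k,\fA_k) \lTo^{\bdy_1} \HarH{C}{2}(\fA_k,\fA_k) \lTo^{\bdy_2} \HarH{C}{3}(\fA_k,\fA_k) \lTo \cdots
\]
is split exact in $\Ban$, I would imitate the inductive construction in the proof of Theorem~\ref{t:only_Harr}. Exactness past degree~$1$ is precisely Proposition~\ref{p:no_higher_non-Lie} in the case $i=1$, and $\Ho{H}{1}(\fA_k,\fA_k)$ is already known to be Banach (indeed isomorphic to $\lp{1}$) by Proposition~\ref{p:Ho1_l1Z+k}. For the splitting I would use that the Hochschild boundaries on $\fA_k$ are open by \cite[Thm~7.5]{GouLyWh}, an openness which passes to the Harrison subcomplex because the BGS map $\sid\ptp\BGS[*]{1}$ is a chain projection; the spaces $\HarH{C}{n}(\fA_k,\fA_k)$ themselves are complemented subspaces of the $\lp{1}$-space $\fA_k^{\ptp(n+1)}$, hence are $\lp{1}$-spaces themselves (by Pe{\l}czy{\'n}ski's theorem that $\lp{1}$ is prime), and the $\lp{1}$-lifting property along an open surjection then produces contracting maps $\sig_n:\HarH{C}{n}(\fA_k,\fA_k)\to\HarH{C}{n+1}(\fA_k,\fA_k)$ by the standard stage-by-stage induction, beginning with a section of the quotient $q:\Ho{C}{1}(\fA_k,\fA_k)\to\Ho{H}{1}(\fA_k,\fA_k)$.

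With Proposition~\ref{p:Harr-as-Ext} now applicable to $B=\fA_k$, it yields
\[ \HarC{H}{n}(\fA_k,M) \iso \Ext^{n-1}_{\fA_k}\bigl(\Ho{H}{1}(\fA_k,\fA_k),\,M_L\bigr). \]
Proposition~\ref{p:Ho1_l1Z+k} decomposes the first argument as a finite direct sum of left $\fA_k$-modules of the form $A_1\ptp\cdots\ptp\Ho{H}{1}(A_i,A_i)\ptp\cdots\ptp A_k$, and additivity of $\Ext$ in the first variable splits the Ext group accordingly. For each $i$ I would rearrange tensor factors so as to present the $i$th summand as $B\ptp X$ with $B=\bigptp_{j\neq i}A_j$ and $X=\Ho{H}{1}(A_i,A_i)$, and then apply Lemma~\ref{l:induced_module} to collapse this to $\Ext^{n-1}_{A_i}(\Ho{H}{1}(A_i,A_i),\,M_i)$; Theorem~\ref{t:only_Harr} applied to the commutative Banach algebra $A_i$ and the symmetric unit-linked bimodule $M_i$ finally identifies the latter Ext group with $\Co{H}{n}(A_i,M_i)$, yielding both asserted isomorphisms.

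The main obstacle is precisely Step~2. In the $k=1$ case of Theorem~\ref{t:only_Harr} one can first build the contracting homotopy on the \emph{full} Hochschild complex (which is exact past degree~$1$ by \cite{GouJohWh}) and then push it onto the Harrison summand via the BGS projection; but for $k\geq 2$ the full Hochschild homology $\Ho{H}{n}(\fA_k,\fA_k)$ is nonzero in higher degrees (by the K\"unneth calculations in \cite{GouLyWh}), so this shortcut is unavailable, and one is forced to work entirely within the Harrison subcomplex from the outset. That is exactly where Proposition~\ref{p:no_higher_non-Lie} and the prime-space property of $\lp{1}$ have to combine.
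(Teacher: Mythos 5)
Your proposal is correct and follows essentially the same route as the paper: verify the split-exactness hypothesis of Proposition~\ref{p:Harr-as-Ext} for $B=\fA_k$ using Proposition~\ref{p:no_higher_non-Lie} (exactness in the Harrison summand) together with the $\lp{1}$-lifting property applied to the complemented-in-$\lp{1}$ Harrison chain spaces, then chain through Proposition~\ref{p:Ho1_l1Z+k}, Lemma~\ref{l:induced_module}, and Theorem~\ref{t:only_Harr}. Your closing observation --- that for $k\geq 2$ one cannot first split the full Hochschild complex and then project, because the full simplicial homology is nonvanishing, so the splitting must be built inside the Harrison subcomplex --- is exactly the point the paper's proof is organised around.
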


\begin{proof}
The second isomorphism follows from Theorem~\ref{t:only_Harr}, so we need only verify the first one. This is done using Proposition~\ref{p:Harr-as-Ext}, following a procedure very similar to that in the proof of Theorem~\ref{t:only_Harr}.

Consider the Hochschild chain complex $\Ho{C}{\blob}(\fA_k,\fA_k)$. By Proposition~\ref{p:no_higher_non-Lie} all the homology has to live in the Lie component of the Hodge decomposition: in particular, the Harrison summand
\[ \HarH{C}{1}(\fA_k,\fA_k) \lTo^{\bdy_1} \HarH{C}{2}(\fA_k,\fA_k) \lTo^{\bdy_2}\ldots\]
is an exact sequence in $\Ban$. The cokernel of $\bdy_1$ is $\HarH{H}{1}(\fA_k,\fA_k)$ and by Proposition~\ref{p:Ho1_l1Z+k} this is a Banach space isomorphic to~$\lp{1}$. Hence
\[ 0\lTo \HarH{H}{1}(\fA_k,\fA_k)\lTo^q \HarH{C}{1}(\fA_k,\fA_k) \lTo^{\bdy_1} \ldots\]
is an exact sequence in $\Ban$ with every term isomorphic to a complemented subspace of $\lp{1}$: the lifting property of such spaces with respect to surjective linear maps now allows us to inductively construct a \emph{splitting in $\Ban$} for this exact sequence.

Thus the conditions of Proposition~\ref{p:Harr-as-Ext} are satisfied, and using that proposition we obtain an isomorphism of seminormed spaces
\[ \HarC{H}{n}(\fA_k,M) \iso \Ext^{n-1}_{\fA_k}\left( \Ho{H}{1}(\fA_k,\fA_k), M\right) \;.\]
By Proposition~\ref{p:Ho1_l1Z+k}
\[ \Ho{H}{1}(\fA_k,\fA_k) \iso \bigoplus_{i=1}^k A_1\ptp \ldots \ptp \Ho{H}{1}(A_i,A_i)\ptp\ldots \ptp A_k \quad;\]
so by Lemma~\ref{l:induced_module} we have, for each $i$\/,
\[ \Ext^{n-1}_{\fA_k}\left[ \left(\bigptp_{j\neq i}A_j \right)\ptp \Ho{H}{1}(A_i,A_i), M\right] \iso \Ext^{n-1}_{A_i}(\Ho{H}{1}(A_i,A_i), M_i) \;.\]
This implies that
\[  \HarC{H}{n}(\fA_k,M) \iso \bigoplus_{i=1}^k \Ext^{n-1}_{A_i}(\Ho{H}{1}(A_i,A_i), M_i) \]
and our proof is complete.
\end{proof}
\begin{remstar}
The proof of Theorem~\ref{t:Harr_of_Ak} can be easily modified to yield a parallel result for Harrison \emph{homology} of $\fA_k$, as follows: using the same notation as above, we have
\[ \HarH{H}{n}(\fA_k,M)\iso \bigoplus_{i=1}^k \Tor^{A_i}_{n-1}(\Ho{H}{1}(A_i,A_i),M_i) \iso \bigoplus_{i=1}^k \Ho{H}{n}(A_i,M_i) \]
for all $n \geq 1$. We omit the details.
\end{remstar}

\end{section}

\begin{section}{Calculation of some second cohomology groups}\label{s:someH2}
Our hope is that Theorem~\ref{t:Harr_of_Ak} can be used as a unifying tool in the calculation of various cohomology groups of $\fA_k$\/. As an illustration, we shall in this section use it to identify $\Co{H}{2}(\fA_k,\fA_k)$ with a certain infinite-dimensional Banach space of derivations.

\begin{rem}
As already mentioned, in the case $k=1$ it has long been known that this cohomology group is non\-zero, and a direct argument to show it is Hausdorff can be found in \cite{DaDunc}: see the remarks there after Equation (1.14). It is also mentioned in \cite{DaDunc} that similar results should hold for $k\geq 2$\/. Thus the novelty of this section is not so much the result itself (although our version appears to be the first explicit statement and proof in the literature). Rather, it lies in our attempt to attack these problems in a systematic way that might generalise to higher-degree cohomology.
\end{rem}

We first sketch how our proof goes in the case $k=1$. As in the previous section, $A$ will denote $\lp{1}(\Z_+)$\/; it is also convenient to denote the Banach algebra $\lp{1}(\Z)$ by $C$\/. The key idea is that the short exact sequence of (symmetric) Banach $A$-modules
\[ 0 \to A\to C\to C/A \to 0 \]
gives rise to a long exact sequence of cohomology
\[ \Co{H}{1}(A,C)
 \rTo \Co{H}{1}(A,C/A)
  \rTo \Co{H}{2}(A,A)
\rTo \Co{H}{2}(A,C) \]
and the two end terms in this sequence turn out to be zero.

For general $k$ one uses Theorem~\ref{t:Harr_of_Ak}, loosely speaking, to turn a $k$-variable problem into a direct sum of $k$-copies of the one-variable problem, to which the argument just sketched applies. The precise statement requires some notation: regarding $C$ as a sym\-metric $A$-bimodule in the obvious way, and regarding $A$ as a closed submodule of $C$, we may form the quotient $A$-bimodule $Q=C/A$; then for any Banach space $E$\/, we regard $Q\ptp E$ as a Banach $A$-bimodule by letting $A$ act on the first factor.

\begin{thm}\label{t:H2AkAk}
There are isomorphisms of seminormed spaces
\begin{equation}
\Co{H}{2}(\fA_k,\fA_k)\iso\HarC{H}{2}(\fA_k,\fA_k) \iso \left( \Co{Z}{1}(A, Q\ptp\lp{1}(\Z_+^{k-1})) \right)^{\oplus k}
\end{equation}
In particular, $\Co{H}{2}(\fA_k,\fA_k)$ is an infinite-dimensional Banach space.
\end{thm}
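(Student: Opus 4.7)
The plan is to establish the two claimed isomorphisms in turn, following the $k=1$ sketch given just above the theorem. For $\Co{H}{2}(\fA_k,\fA_k)\iso \HarC{H}{2}(\fA_k,\fA_k)$: in degree~$2$ the Hodge decomposition has only the Harrison and Lie summands, so it suffices to show $\Co{H}{2,0}(\fA_k,\fA_k)=0$. Since $\fA_k$ is commutative and the coefficient module is symmetric, the coboundary of every $1$-cochain is symmetric in its two arguments, so $\Co{H}{2,0}=\Co{Z}{2,0}$. The decisive input is that every bounded derivation $D\colon\fA_k\to\fA_k$ vanishes: $D(z_i^n)=n\,z_i^{n-1}D(z_i)$, and convolution by $z_i^{n-1}$ is a pure coordinate shift and hence an isometry on $\lp{1}(\Z_+^k)$, so $\norm{D(z_i^n)}=n\norm{D(z_i)}$ forces $D(z_i)=0$ for each $i$ and then $D=0$ by density. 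In the commutative/symmetric setting the $(n,0)$-summand of the Hodge decomposition is controlled by bounded derivation-type data, so this vanishing propagates to give $\Co{H}{2,0}(\fA_k,\fA_k)=0$.

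For the second isomorphism, apply Theorem~\ref{t:Harr_of_Ak} with $M=\fA_k$ to obtain $\HarC{H}{2}(\fA_k,\fA_k)\iso\bigoplus_{i=1}^k\Co{H}{2}(A_i,M_i)$, with each $M_i\iso A\ptp\lp{1}(\Z_+^{k-1})$ as a unit-linked symmetric $A$-bimodule ($A=\lp{1}(\Z_+)$ acting on the first factor). Fix $i$ and set $C=\lp{1}(\Z)$, $Q=C/A$. The short exact sequence $0\to A\to C\to Q\to 0$ of symmetric $A$-bimodules is split in $\Ban$ (by the truncation $C\to A$), and tensoring with the $\Lp{1}$-space $\lp{1}(\Z_+^{k-1})$ preserves split-exactness (compare the proof of Corollary~\ref{c:BARRY}). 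The induced Hochschild long exact sequence contains
\begin{multline*}
\Co{H}{1}(A,C\ptp\lp{1}(\Z_+^{k-1})) \to \Co{H}{1}(A,Q\ptp\lp{1}(\Z_+^{k-1})) \\
\to \Co{H}{2}(A,A\ptp\lp{1}(\Z_+^{k-1})) \to \Co{H}{2}(A,C\ptp\lp{1}(\Z_+^{k-1})).
\end{multline*}
The first term vanishes by the same Leibniz-plus-isometry argument---$z$ is a unit of norm $1$ in $C$, so multiplication by $z^{n-1}$ is isometric on $C\ptp\lp{1}(\Z_+^{k-1})$---and for any symmetric $A$-bimodule~$N$ one has $\Co{H}{1}(A,N)=\Co{Z}{1}(A,N)$, since inner derivations vanish.

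The expected main obstacle is the vanishing of the right-end term $\Co{H}{2}(A,C\ptp\lp{1}(\Z_+^{k-1}))$. I would rewrite it, via Theorem~\ref{t:only_Harr}, as $\Ext^{1}_{A}(\Ho{H}{1}(A,A),\,C\ptp\lp{1}(\Z_+^{k-1}))$ and aim to prove that $C\ptp\lp{1}(\Z_+^{k-1})$ is $A$-injective relative to $\Ban$---heuristically because $C$ is the norm-preserving ``localisation'' of $A$ inverting $z$, so admissible extensions of left Banach $A$-modules should split after homming into a module on which $z$ acts invertibly. Once both flanking terms are shown to be zero, the exact sequence collapses to $\Co{H}{2}(A,A\ptp\lp{1}(\Z_+^{k-1}))\iso\Co{Z}{1}(A,Q\ptp\lp{1}(\Z_+^{k-1}))$, and summing over $i$ yields the direct-sum formula. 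For infinite-dimensionality: by \cite{DaDunc} there is an infinite linearly independent family in $\Co{Z}{1}(A,Q)$ (the $k=1$ case), and tensoring each member with a fixed non-zero element of $\lp{1}(\Z_+^{k-1})$ injects this family into $\Co{Z}{1}(A,Q\ptp\lp{1}(\Z_+^{k-1}))$, establishing that the right-hand side of the theorem is an infinite-dimensional Banach space.
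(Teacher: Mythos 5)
The overall strategy you lay out is the same as the paper's (pass through the Harrison summand via Theorem~\ref{t:Harr_of_Ak}, reduce to $k=1$ with $E=\lp{1}(\Z_+^{k-1})$ along for the ride, then run the long exact sequence for $0\to A\ptp E\to C\ptp E\to Q\ptp E\to 0$). But two steps are not actually proved.

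First, for the isomorphism $\Co{H}{2}(\fA_k,\fA_k)\iso\HarC{H}{2}(\fA_k,\fA_k)$ you need the Lie component $\Co{H}{2,0}(\fA_k,\fA_k)$ to vanish, and your sentence ``the $(n,0)$-summand is controlled by bounded derivation-type data, so this vanishing propagates'' is not an argument. The mechanism is specific: by a result of Johnson (\cite[Thm~2.3]{BEJ_high}) a bounded \emph{alternating} $2$-cocycle $\fA_k\times\fA_k\to\fA_k$ is automatically a derivation in each variable, and it is this, combined with the vanishing of bounded derivations on $\fA_k$ (your Leibniz-plus-isometry computation, or Singer--Wermer), that kills $\Co{Z}{2,0}=\Co{H}{2,0}$. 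Without naming that ingredient the step does not close.

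Second, and more seriously, the vanishing of the right-hand term $\Co{H}{2}(A,C\ptp E)$ is exactly what you flag as ``the expected main obstacle,'' and then you only gesture at it: the heuristic that ``admissible extensions should split after homming into a module on which $z$ acts invertibly'' is not true in general and is not a proof. The paper's argument has two essential ingredients you have not supplied. One is that $C=\lp{1}(\Z)$ is \emph{amenable}: Proposition~\ref{l:inherit-biinj} uses a Banach limit (i.e.\ an invariant mean on $\lp{\infty}(\Z)$) to produce a bounded $A$-module retraction $\Lin{A}{N'}\to N'$ for any $C$-bimodule $N$, giving $A$-bi-injectivity of $N'$ and hence $\Co{H}{n}(A,N')=0$ for $n\geq 1$. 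The other is the observation that $C\ptp\lp{1}(\Z_+^{k-1})$ is itself a \emph{dual} $C$-bimodule, with predual $c_0(\Z\times\Z_+^{k-1})$, so that Proposition~\ref{l:inherit-biinj} applies to it directly. Invertibility of $z$ alone does not yield this; the amenability of $\Z$ is the real engine. Until you supply (or cite) something equivalent to Proposition~\ref{l:inherit-biinj} and the dual-module identification, the collapse of the long exact sequence to $\Co{H}{2}(A,A\ptp E)\iso\Co{Z}{1}(A,Q\ptp E)$ is unestablished.

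The rest is fine: the use of Theorem~\ref{t:Harr_of_Ak}, the identification $M_i\iso A\ptp E$, the vanishing of $\Co{H}{1}(A,C\ptp E)$ by the isometry argument, the identification $\Co{H}{1}(A,Q\ptp E)=\Co{Z}{1}(A,Q\ptp E)$ for a symmetric module, and the infinite-dimensionality via tensoring a nonzero family from the $k=1$ case all match or legitimately replace what the paper does.
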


The proof will, in addition to using Theorem~\ref{t:Harr_of_Ak}, require some preliminary results which may be known to specialists but which we give for sake of completeness.

\begin{propn}\label{l:inherit-biinj}
Let $N$ be a Banach $C$-bimodule, regarded as a Banach $A$-bimodule via the inclusion homomorphism $A\hookrightarrow C$\/. Then
\begin{itemize}
\item[$(i)$] $N$ is $A$-biflat;
\item[$(ii)$] $\Co{H}{n}(A,N')=0$ for all $n\geq 1$\/.
\end{itemize}
\end{propn}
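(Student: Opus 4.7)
The key fact is that $C = \lp{1}(\Z)$, being the group algebra of the amenable group $\Z$, is amenable in the sense of Johnson; consequently every Banach $C$-bimodule is $C$-biflat and every bounded cocycle $C^{\times n} \to N'$ is a coboundary. The whole proof is an exercise in transferring these two properties from $C$ down to $A$ along the inclusion $A \hookrightarrow C$.

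For part~(ii) I would argue by extension of cocycles. Given a bounded $n$-cocycle $\psi: A^{\times n} \to N'$, I aim to construct a bounded $n$-cocycle $\widetilde\psi: C^{\times n} \to N'$ restricting to $\psi$ on $A^{\times n}$. The algebra $C$ is generated by $A$ together with the invertible element $z^{-1}$, so the cocycle identity combined with the relations $z z^{-1} = z^{-1} z = \id$ dictates the values of $\widetilde\psi$ on tuples involving $z^{-1}$; the resulting cochain is controlled in norm by averaging using a translation-invariant weak-$*$ limit on $N'$, supplied again by amenability of $\Z$. Once $\widetilde\psi$ is in hand, amenability of $C$ gives $\widetilde\psi = \dif\widetilde\phi$ for some bounded $(n-1)$-cochain, and restriction to $A^{\times(n-1)}$ exhibits $\psi$ as a coboundary. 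The prototype is Johnson's classical argument for $n = 1$: from a derivation $D: A \to N'$, set $\phi_k \defeq z^{-k}\cdot D(z^k) \in N'$, note that $\sup_k\Norm{\phi_k} \leq \Norm{D}$, take a weak-$*$ cluster point $\phi$ of $(\phi_k)$ along a translation-invariant ultrafilter on $\Nat$, and verify $D(z) = z\cdot\phi - \phi\cdot z$ using the derivation identity and translation-invariance; by linearity and continuity $D(a) = a\phi - \phi a$ for every $a \in A$, and so $D$ is inner.

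For part~(i) I would use Helemskii's duality: a Banach $A$-bimodule $M$ is $A$-biflat if and only if $M'$ is $A$-biinjective, i.e.~every admissible short exact sequence of $A$-bimodules $0 \to M' \to X \to Y \to 0$ splits. To establish biinjectivity of $N'$ I would proceed in parallel with~(ii): any such admissible embedding $N' \hookrightarrow X$ is to be compared with the analogous sequence obtained by tensoring up to $C$-bimodule land, where the $C$-biinjectivity of $N'$ (available by amenability of $C$) produces a splitting; an averaging argument analogous to the one used for~(ii) then converts this $C$-bimodule splitting into an $A$-bimodule splitting. An arguably cleaner alternative is to show directly that $N$ admits a bounded $A$-bimodule section of the canonical map $\fu{A}\ptp N\ptp \fu{A}\to N$, using the invertibility of $z$ in $C$ acting on $N$ to compensate for the non-invertibility of $z$ in $A$; this would give the stronger statement of $A$-biprojectivity.

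The main obstacle I foresee is the extension step in~(ii) when $n \geq 2$: the $n=1$ averaging is classical and transparent, but the combinatorics of higher-degree cocycles makes it more delicate to ensure that the extension $\widetilde\psi$ is simultaneously a cocycle and bounded. If the direct extension proves unwieldy, a fallback is to combine Theorem~\ref{t:only_Harr} (reducing $\Co{H}{n}(A,N')$ to $\Ext_A^{n-1}(\Ho{H}{1}(A,A), N'_L)$ after, if necessary, a symmetrization using commutativity of $C$) with the $n=1$ case already settled by the averaging argument, and then to vanish the Ext groups by exhibiting a short $A$-projective resolution of $\Ho{H}{1}(A,A)$.
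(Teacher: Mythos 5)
Your proposal captures the right background idea---amenability of $\Z$, averaging via a Banach limit---but gets the logical architecture backwards, and that matters here. The paper's proof notes first that $(ii)$ is a \emph{consequence} of $(i)$: once $N$ is $A$-biflat, $N'$ is $A$-bi-injective, so $\Co{H}{n}(A,N')\cong\Ext^n_{A^e}(A,N')=0$ for $n\geq 1$ by \cite[Thm~4.7]{Hel_HBTA}. The whole burden therefore falls on $(i)$, which the paper proves by exhibiting directly a bounded map $\rho:\Lin{A}{N'}\to N'$ satisfying $\rho(z^p\cdot T)=z^p\cdot\rho(T)$ and $\rho(J\psi)=\psi$, via the Banach-limit formula $\pair{\rho(T)}{y}=\mathop{\rm LIM}_n\pair{T(z^n)}{y\cdot z^{-n}}$. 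This is exactly the one-degree averaging you carry out for the $n=1$ case, promoted to a retraction witnessing bi-injectivity of $N'$, and so it is never necessary to extend higher cocycles at all.

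Your primary route to $(ii)$---extend a bounded $n$-cocycle $\psi:A^{\times n}\to N'$ to a cocycle on $C^{\times n}$---is exactly the step you yourself flag as delicate, and for good reason: for $n\geq 2$ the cocycle identity combined with $zz^{-1}=\id$ overdetermines the extension, and it is not at all clear one can simultaneously keep it a cocycle and control its norm by a single averaging. The paper's route avoids this entirely, so you should regard the deduction $(i)\Rightarrow(ii)$ as the main point, not a fallback. Your fallback via Theorem~\ref{t:only_Harr} also has a gap: that theorem applies to \emph{symmetric} $A$-bimodules, and nothing in the hypotheses forces $N$ (hence $N'$) to be symmetric over $A$.

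Your ``cleaner alternative'' for $(i)$---showing $N$ is $A$-biprojective by producing a bounded $A$-bimodule section of $\fu{A}\ptp N\ptp\fu{A}\to N$---is too strong and will not go through. The Banach-limit construction produces weak-$*$ limits, i.e.~elements of a bidual, not a genuine norm-continuous section; this is precisely why amenability of $C$ gives biflatness (and bi-injectivity of duals) rather than biprojectivity. Already for $N=C$ itself there is no reason to expect $\lp{1}(\Z)$ to be biprojective over $\lp{1}(\Z_+)$. Your main $(i)$ argument (Helemskii duality plus a $C$-to-$A$ averaging of the splitting) is in the same spirit as the paper's and can be made to work, but the paper's direct construction of the retraction $\rho$ is more economical.
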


\begin{proof}
First note that assertion $(ii)$ follows from assertion $(i)$, since the dual of a biflat module is bi-injective and so by \cite[Thm~4.7]{Hel_HBTA} 
\[ \Co{H}{n}(A,N') \iso \Ext^n_{A^e}(A, N') =0 \quad\text{ for $n\geq 1$\/.} \]
Hence it remains only to prove $(i)$, or equivalently, to prove that $N'$ is $A$-bi-injective.

This will follow once we construct a bounded linear map $\rho: \Lin{A}{N'} \to N'$ such that
\begin{itemize}
\item for every $p\in\Z_+$ and $T\in\Lin{A}{N'}$,
\begin{equation}\label{eq:jackanory}
  \rho(z^p\cdot T)= z^p\cdot\rho(T) \;;
\end{equation}
\item for every $\psi\in N'$, $\rho(J\psi)=\psi$, where $J\psi:A\to N'$ is defined by
\[ \pair{J\psi(a)}{y} \defeq \pair{\psi}{y\cdot a} \qquad(a\in A, y\in N)\/.\]
\end{itemize}

Fix a Banach limit $\mathop{\rm LIM}$ on $\lp{\infty}(\Nat)$\/: then, for 
each $T\in \Lin{A}{N'}$ and $y\in N'$, let
\begin{equation}\label{eq:bagpuss}
 \pair{\rho(T)}{y}\defeq \mathop{\rm LIM}\nolimits_n \pair{T(z^n)}{y\cdot z^{-n}} \end{equation}
where the right-hand side is well-defined since $N$ is a $C$-module.
Linearity and continuity of $\mathop{\rm LIM}$ imply that the formula \eqref{eq:bagpuss} defines a bounded linear map $\rho:\Lin{A}{N'}\to N'$\/: and translation-invariance of $\mathop{\rm LIM}$ implies that Equation~\eqref{eq:jackanory} holds. Finally, since $\mathop{\rm LIM}$ sends the constant sequence $(1,1,\ldots)$ to $1$\/, it is easily checked that $\rho(J\psi)=\psi$ for every $\psi\in N'$\/.
\end{proof}

\begin{rem}
At a more abstract level, this proof works because $C$ is \emph{amenable} (so that every $C$-bimodule is $C$-biflat) and because $C$ is itself flat as an $A$-module. A more systematic approach to this phenomenon is given in \cite{MCW_UA}: see \S4 in particular.
\end{rem}

Note that $C\ptp\lp{1}(\Z_+^{k-1})$ is itself a dual $C$-bimodule, with predual $c_0(\Z\times\Z^{k-1})$ where $C$ acts by translation `in the first variable'. Hence
\begin{equation}\label{eq:YANA}
\Co{H}{1}(A, C\ptp\lp{1}(\Z_+^{k-1})) = \Co{H}{2}(A, C\ptp\lp{1}(\Z_+^{k-1})) = 0
\end{equation}

\begin{proof}[Proof of Theorem \ref{t:H2AkAk}]
For \emph{this proof} let us temporarily write $E$ for the Banach space $\lp{1}(\Z_+^{k-1})$\/.

The first isomorphism follows from the following observations: $\Co{Z}{1,1}(\fA_k,\fA_k)$ is the space of all bounded, anti\-symmetric $2$-cocycles $\fA_k\times\fA_k\to\fA_k$ (see Theorem~\ref{t:BGSalt} above). By \cite[Thm~2.3]{BEJ_high}, any such $2$-cocycle must be a derivation in each variable; but by the Singer-Wermer theorem (or a direct argument) the only bounded derivation from $\fA_k$ to itself is the zero map. Hence $\Co{H}{1,1}(\fA_k,\fA_k)=\Co{Z}{1,1}(\fA_k,\fA_k)=0$\/.

To prove the second isomorphism, we invoke Theorem~\ref{t:Harr_of_Ak} to obtain an isomorphism of seminormed spaces
\[ \HarC{H}{2}(\fA_k,\fA_k) \iso \bigoplus_{i=1}^k \Ext^2_A( \Ho{H}{1}(A,A), M_i) \]
where for each $i$\/, $M_i$ denotes the $A$-bimodule obtained by letting $A$ act on $\lp{1}(\Z_+^k)$ by `multiplication in the $i$th variable'\/.
By symmetry it is clear that $M_1,\ldots, M_k$ are all isomorphic as Banach $A$-bimodules to $A\ptp E$, and so to complete the proof it suffices to show that
\begin{equation}
\Co{H}{2}(A,A\ptp E) \iso \Co{Z}{1}(A,Q\ptp E)
\end{equation}

The short exact sequence of $A$-bimodules
\[ 0\to A \to C \to Q \to 0 \]
is admissible (splits in $\Ban$), and so remains an admissible short exact sequence of $A$-bimodules when we tensor with the Banach space $ E$\/. Hence we have a long exact sequence of cohomology
\[ \Co{H}{1}(A,C\ptp E)
 \rTo \Co{H}{1}(A, Q\ptp E)
  \rTo^{\mathop{\rm conn}} \Co{H}{2}(A,A\ptp E)
\rTo \Co{H}{2}(A,C\ptp E) \]
By Equation \eqref{eq:YANA} the two end terms are zero and hence the `connecting homomorphism' $\mathop{\rm conn} : \Co{H}{1}(A, Q\ptp E) \to \Co{H}{2}(A, A\ptp E)$ is bijective; by \cite[Lemma~0.5.9]{Hel_HBTA}, $\mathop{\rm conn}$ is therefore an \emph{isomorphism of seminormed spaces}. Finally, since $Q\ptp E$ is a symmetric bimodule, $\Co{H}{1}(A,Q\ptp E)=\Co{Z}{1}(A, Q\ptp E)$ and the proof is complete. 
\end{proof}
\end{section}

\appendix
\begin{section}{Another proof that $\Ho{H}{1}(\lp{1}(\Z_+),\lp{1}(\Z_+))$ is an $\lp{1}$-space}\label{app:HH1_YC}

See Lemma \ref{l:Fred's_lemma} above for the precise statement. The
proof of this result in \cite[Propn 7.3]{GouLyWh} is somewhat fiddly. We present a slightly more streamlined approach which appears to be new.

\begin{proof}
Let $q:\Ho{C}{1}(A,A)\to \lp{1}(\Nat)$ be defined as above. We define bounded linear maps $\sB$, $\sf S$ and $\sf H$ as follows.
\[ \sB(z^N) \defeq 1\tp z^N \]
\begin{flalign*}
{\sf S}(z^{N-j}\tp z^j) & \defeq \left\{
\begin{array}{lr}
\id\tp z^j\tp z^{N-j}  + z^j\tp z^j\tp z^{N-2j} & \quad\text{ if $0\leq j\leq N/2$} \\
\id\tp z^j\tp z^{N-j}  - z^{N-j}\tp z^{2j-N}\tp z^{N-j} &  \quad\text{ if $N/2 \leq j\leq N$}
\end{array}\right. \\
\end{flalign*} 
\begin{flalign*}
{\sf H}(z^{N-j}\tp z^j) & \defeq \left\{
	\begin{aligned}
	 2z^{N-j}\tp z^j + z^{2j}\tp z^{N-2j} & &\quad\text{ if $0\leq j\leq N/2$} \\
	 2z^{N-j}\tp z^j - z^{2N-2j}\tp z^{2j-N} & &\quad\text{ if $N/2 \leq j\leq N$}
	\end{aligned} \right.
\end{flalign*} 

\medskip
\noindent{\bf Claim.}
The maps $q$, $\sB, {\sf H}$ fit into a diagram
\[ \begin{diagram}[tight,height=2em]\label{diag:predual}
\lp{1}(\Nat) & \LR{q}{\sB} & \Ho{C}{1}(A,A) & \lTo^{\bdy} & \Ho{C}{2}(A,A) \\
  & & \uTo^{{\sf H}} & \ruTo^{{\sf S}} & \\
  & & \Ho{C}{1}(A,A) & &
\end{diagram} \]
where $q\sB=\sid$ and
\[ (\sid-\sB q){\sf H} = \bdy{\sf S} \]
(Here $\bdy$ denotes the Hochschild boundary operator.)

\medskip
The claim can be proved by direct checking on elementary tensors. Now observe that since $\norm{2\sid-{\sf H}}\leq 1$, ${\sf H}$ is \emph{invertible} as a bounded linear operator on the Banach space $\Ho{C}{1}(A,A)$. Hence $\sid-\sB q = \bdy{\sf S}{\sf H}^{-1}$ and the complex
\[ \lp{1}(\Nat) \lTo^q \Ho{C}{1}(A,A) \lTo^{\bdy} \Ho{C}{2}(A,A) \]
is thus split exact in $\Ban$.
\end{proof}

\vspace{0.5em}
\begin{remstar}
The argument just given may seem slightly mysterious, as we have provided no explanation of how one might come up with the maps $\sB$ and ${\sf S}$. In fact the construction above was discovered while considering the dual problem of proving that $\Co{H}{2}(A,A')$ is a Banach space. Further details can be found in Appendix C of the author's thesis~\cite{YC_PhD}.
\end{remstar}

\end{section}


\begin{thebibliography}{10}

\bibitem{Barr_Harr}
M.~Barr.
\newblock Harrison homology, {H}ochschild homology and triples.
\newblock {\em J. Algebra}, 8:314--323, 1968.

\bibitem{YC_PhD}
Y.~Choi.
\newblock {\em Cohomology of commutative {B}anach algebras and
  {$\ell^1$}-semigroup algebras}.
\newblock PhD thesis, University of Newcastle upon Tyne, 2006.

\bibitem{DaDunc}
H.~G. Dales and J.~Duncan.
\newblock Second-order cohomology groups of some semigroup algebras.
\newblock In {\em Banach algebras '97 (Blaubeuren)}, pages 101--117. de~Gruyter, Berlin, 1998.

\bibitem{DefFlor}
A.~Defant and K.~Floret.
\newblock {\em Tensor norms and operator ideals}, volume 176 of {\em
  North-Holland Mathematics Studies}.
\newblock North-Holland Publishing Co., Amsterdam, 1993.

\bibitem{G_Barr}
M.~Gerstenhaber.
\newblock Developments from {B}arr's thesis.
\newblock {\em J. Pure Appl. Algebra}, 143(1-3):205--220, 1999.

\bibitem{GS_Hodge}
M.~Gerstenhaber and S.~D. Schack.
\newblock A {H}odge-type decomposition for commutative algebra cohomology.
\newblock {\em J. Pure Appl. Algebra}, 48(3):229--247, 1987.

\bibitem{GouJohWh}
F.~Gourdeau, B.~E. Johnson, and M.~C. White.
\newblock The cyclic and simplicial cohomology of {$\ell^1({\mathbb N})$}.
\newblock {\em Trans. Amer. Math. Soc.}, 357(12):5097--5113 (electronic), 2005.

\bibitem{GouLyWh}
F.~Gourdeau, Z.~A. Lykova, and M.~C. White.
\newblock A {K}\"unneth formula in topological homology and its applications to
  the simplicial cohomology of {$\ell^1({\mathbb Z}^k_+)$}.
\newblock {\em Studia Math.}, 166(1):29--54, 2005.

\bibitem{HarrCo_62}
D.~K. Harrison.
\newblock Commutative algebras and cohomology.
\newblock {\em Trans. Amer. Math. Soc.}, 104:191--204, 1962.

\bibitem{Hel_HBTA}
A.~{\relax Ya}. Helemskii.
\newblock {\em The homology of {B}anach and topological algebras}, volume~41 of
  {\em Mathematics and its Applications (Soviet Series)}.
\newblock Kluwer Academic Publishers Group, Dordrecht, 1989.

\bibitem{BEJ_high}
B.~E. Johnson.
\newblock Higher-dimensional weak amenability.
\newblock {\em Studia Math.}, 123(2):117--134, 1997.

\bibitem{Lod_CH}
J.-L. Loday.
\newblock {\em Cyclic homology}, volume 301 of {\em Grundlehren der
  Mathematischen Wissenschaften [Fundamental Principles of Mathematical
  Sciences]}.
\newblock Springer-Verlag, Berlin, 1992.
\newblock Appendix E by Mar\'\i a O. Ronco.

\bibitem{Run_Kahl}
V.~Runde.
\newblock A functorial approach to weak amenability for commutative {B}anach algebras.
\newblock {\em Glasgow Math. J.}, 34(2):241--251, 1992.

\bibitem{Ryan_TP}
R.~A. Ryan.
\newblock {\em Introduction to tensor products of {B}anach spaces}.
\newblock Springer Monographs in Mathematics. Springer-Verlag London Ltd.,
  London, 2002.

\bibitem{Weibel}
C.~A. Weibel.
\newblock {\em An introduction to homological algebra}, volume~38 of {\em
  Cambridge Studies in Advanced Mathematics}.
\newblock Cambridge University Press, Cambridge, 1994.

\bibitem{MCW_UA}
M.~C. White.
\newblock Injective modules for uniform algebras.
\newblock {\em Proc. London Math. Soc. (3)}, 73(1):155--184, 1996.

\end{thebibliography}

\vfill

\noindent%
\begin{tabular}{l@{\hspace{32mm}}l}
{\bf Address:}  &  {\bf Current address:}\\
Department of Mathematics, & 
	D\'epartement de math\'ematiques\\
	& \text{\hspace{1.0em}} et de statistique, \\
Machray Hall & Pavillon Alexandre-Vachon \\
University of Manitoba &
	Universit\'e Laval \\
Winnipeg, MB & Qu\'ebec, QB \\
Canada, R3T 2N2 & Canada, G1V 0A6 \\
	& \\
	& {\bf Email: \tt y.choi.97@cantab.net}
\end{tabular}

\end{document}